\newcommand{\nc}{\newcommand}
\nc{\rnc}{\renewcommand}
\numberwithin{equation}{section}
\newtheorem{theorem}{Theorem}[section]
\newtheorem{lemma}[theorem]{Lemma}
\newtheorem{proposition}[theorem]{Proposition}
\newtheorem*{question*}{Questions}
\newtheorem{remark}[theorem]{Remark}
\newcommand{\R}{\mathbb{R}}
\newcommand{\N}{\mathbb{N}}
\newcommand{\To}{\rightarrow}
\title{\Large\textbf{Mean Li--Yorke chaos and multifractal analysis on subshifts}
}
\author{
		Zijie Lin
		\thanks{School of Mathematical Sciences and Institute of Mathematics, Nanjing Normal University, Nanjing 210023, P. R. China
			(E-mail: zjlin137@126.com)},
		Ercai Chen
		\thanks{School of Mathematical Sciences and Institute of Mathematics, Nanjing Normal University, Nanjing 210023, P. R. China,
			and Center  of Nonlinear Science, Nanjing University, Nanjing 210093, P. R.China
			(E-mail: ecchen@njnu.edu.cn),
		},
		Xiaoyao Zhou
		\thanks{School of Mathematical Sciences and Institute of Mathematics, Nanjing Normal University, Nanjing 210023, P. R. China
			(E-mail: zhouxiaoyaodeyouxian@126.com)}
}
\begin{document}
	\date{}
	\maketitle
	
	\begin{abstract}
		In the present paper, we use the generalized multifractal framework introduced by Olsen to study the Bowen entropy and packing entropy of historic sets with typical weights over aperiodic and irreducible shifts of finite type.
		Following those results and a transfer from almost everywhere to everywhere, we show that for each point $\omega$ in a irreducible shift of finite type $\Sigma_A$, the Bowen entropy of the set consisting of all the points that are mean Li-Yorke pairs with $\omega$ is $0$, and its packing entropy is full. This result is beyond the ergodic theory. Also, by the transfer from almost everywhere to everywhere, we show that for each point $\omega$ in a irreducible shift of finite type $\Sigma_A$, the Bowen entropy of the set consisting of all the points that are Li-Yorke pairs with $\omega$ is full. This result is also beyond the ergodic theory.
	\end{abstract}
	\noindent
	{\bf Keywords.} weighted Birkhoff average, multifractal analysis, Bowen entropy, packing entropy, mean Li--Yorke chaos, Li--Yorke chaos.\\
	
\section{Introduction}
	Recall that a \emph{topological dynamical system} is a pair $(X,T)$ where $X$ is a compact metric space with a metric $\rho$ and $T:X\To X$ is a continuous surjection. For $x\in X$ and $\epsilon>0$, let $B(x,\epsilon):=\{y\in X:\rho(x,y)<\epsilon\}$ for $x\in X$ and $B(C,\epsilon):=\{y\in X:\rho(y,C):=\inf_{z\in C}\rho(y,z)<\epsilon\}$ for $C\subset X$.

	Chaos, first introduced in \cite{LY75}, plays an important role in the study of complexity for dynamical systems.
	For a topological dynamical system $(X,T)$ and $x,y\in X$, recall that $(x,y)$ is a \emph{Li--Yorke scrambled pair} if
	$$\liminf_{n\To\infty}\rho(T^nx,T^ny)=0\text{ and }\limsup_{n\To\infty}\rho(T^nx,T^ny)>0.$$
	And $(X,T)$ is \emph{Li--Yorke chaotic} if there is an uncountable Li--Yorke scrambled set $S\subset X$, that is, $(x,y)$ is Li--Yorke scrambled pair for any $x\neq y\in S$.
	A pair $(x,y)\in X\times X$ is said to be a \emph{mean Li--Yorke pair} if
	$$
	\liminf_{n\rightarrow\infty}\frac{1}{n}\sum\limits_{i=0}^{n-1}\rho(T^ix,T^iy)=0\text{ and } \limsup_{n\rightarrow\infty}\frac{1}{n}\sum\limits_{i=0}^{n-1}\rho(T^ix,T^iy)>0.
	$$
	And $(X,T)$ is \emph{mean Li--Yorke chaotic} if there is an uncountable mean Li--Yorke scrambled set $S\subset X$, that is, $(x,y)$ is mean Li--Yorke scrambled pair for any $x\neq y\in S$.
	
	After Li--Yorke chaos, distributional chaos was first introduced in \cite{SS94} and was generalized in \cite{BSS05}, \cite{PS05} and \cite{PS06}.
	By describing the
	densities of trajectory approach time sets, distributional chaos reveals more rigorous complexity hidden in Li--Yorke chaos.
	
	We will now briefly review the definitions of three types of  distributional chaos.
	For ${x},{y}\in X$, define the lower distributional function $F_{{x},{y}}$ and  upper distributional function $F_{{x},{y}}^*$ from $(0,+\infty)$ to $[0,1]$ by
	\begin{equation}
	\begin{aligned}
	&F_{x, y}(\epsilon) =\liminf_{n \rightarrow \infty} \frac{1}{n} \#\left(\left\{0 \leq i<n : \rho\left(T^{i}x, T^{i}y\right)<\epsilon\right\}\right), \\
	&F_{x, y}^{*}(\epsilon) =\limsup _{n \rightarrow \infty} \frac{1}{n} \#\left(\left\{0 \leq i<n : \rho\left(T^{i}x, T^{i}y\right)<\epsilon\right\}\right),
	\end{aligned}
	\end{equation}
	where $\#(\cdot)$ denotes the cardinality of a set.
	A couple $({x},{y})\in X\times X$ is called a DC1 { pair} if
	\[
	F_{x, y}^{*}(\epsilon) \equiv 1 \text { on }(0,+\infty) \text { and } F_{x, y}(\epsilon) \equiv 0 \text { on some }\left(0, \epsilon_{0}\right],
	\]
	a DC2 { pair} if
	\[
	F_{x,y}^*(\epsilon)\equiv 1 \text { on }(0,+\infty) \text { and } F_{{x},{y}}(\epsilon)<1 \text { on some }\left(0,\epsilon_{0}\right],
	\]
	and a DC3 { pair} if
	\[
	F_{{x},{y}}(\epsilon)<F_{{x},{y}}^*(\epsilon) \text{ on some } \left(\epsilon_{0},\epsilon_{1}\right ].
	\]
	A set $C\subset X$ is said to be a $\rm{DC}i$ ($i=1,2$ or 3) { scrambled set} if each pair of different points in $C$ forms a $\rm {DC}i$ pair. In general, $f$ is said to be $\rm {DC}i$ chaotic if it has an uncountable $\rm{DC}i$ scrambled set.
	It is observed in \cite{Dow14} that DC2 chaos is equivalent to mean Li--Yorke chaos (see \cite{Dow14} for details), and a pair is a DC2 pair if and only if it is a mean Li--Yorke pair.
	In \cite{WCZ19}, the authors show the mean Li--Yorke chaos in some random dynamical systems.
	In \cite{Xiao21}, the author proves that mean Li-Yorke chaotic sets along polynomial sequence in $\beta$-transformation are full Hausdorff dimension.
	
	For $x\in X$, denote the set of all the points that are Li--Yorke pairs with $x$ by
	$$\mathrm{LY}_{x}(X,T)=\{y\in X:(x,y)\text{ is a Li--Yorke pair.}\},$$
	and the set of all the points that are mean Li--Yorke pairs with $x$ by
	$$\mathrm{ML}_{x}(X,T)=\{y\in X:(x,y)\text{ is a mean Li--Yorke pair.}\}.$$
	Then natural questions are
	\begin{question*}\label{q}
		Is the set $\mathrm{LY}_{x}(X,T)$( or $\mathrm{ML}_{x}(X,T)$) nonempty?
		If $\mathrm{LY}_{x}(X,T)$( or $\mathrm{ML}_{x}(X,T)$) is nonempty, how big is it?
	\end{question*}
	By the definition of mean Li-Yorke pair, the set $\mathrm{ML}_{x}(X,T)$ can be seen as a historic set of weighted Birkhoff averages, which motivates us use the tools of multifractal analysis.
	Following the works \cite{Bes34} and \cite{Egg49}, multifractal analysis is systematized in \cite{Pes97}.
	Originally, multifractal analysis investigates the measures and dimensions of the so-called level sets, i.e., the sets on which the Birkhoff average converge to given numbers.
	In \cite{BOS07}, \cite{Ols04} and \cite{Ols03}, multifractal analysis is applied to a wider family of saturated sets, including sets on which the Birkhoff averages diverge.
	There are many results on Birkhoff averages via multifractal analysis.
	Fan, Schmeling and Wu(\cite{FSW11}) completely answer the problem of multifractal analysis of
	multiple ergodic averages in the case of symbolic dynamics for functions of two variables depending on the first coordinate.
	In 2016, Fan, Schmeling and Wu(\cite{FSW16}) present a complete solution on multifractal analysis of the limit of some multiple ergodic averages on symbolic space.
	Recently, Fan(\cite{Fan21}) study the multifractal behavior of weighted ergodic averages on symbolic space.
	See more early works in \cite{FLW02,Ols06,Ols08,OW03}.
	In the present paper, we investigate the sets on which the weighted Birkhoff averages diverge and apply our results to mean Li-Yorke chaotic sets.
	
	In the present paper, we focus on subshifts, an important class of dynamical systems.
	Let $\mathcal{A}=\{1,...,K\}$ be a finite alphabet. Recall that a \emph{full shift} is a topological dynamical system $(\Sigma,\sigma)$ where
	$$\Sigma=\mathcal{A}^\N=\{x=x_0x_1x_2\cdots:x_i\in\mathcal{A}, i\in\N\}$$
	and $\sigma$ is the shift map, that is, $(\sigma x)_i=x_{i+1}$ for $i\in\N$.
	For $n>0$, denote by $\Sigma_n$ the set of $n$-length finite words, and let $\Sigma_*=\bigcup_{n>0}\Sigma_n$.
	For a word $W\in\Sigma_*$, denote $|W|$ be the length of $W$ and $[W]=\{x\in\Sigma:x|_{[0,|W|)}=W\}$ be the corresponding cylinder set, where $x|_{[a,b)}=x_ax_{a+1}\cdots x_{b-1}$ for any $a<b\in\N$. A compatible metric on $\Sigma$ is the metric
	$$\rho(x,y)=K^{-\min\{n\ge 0:x_n\neq y_n\}}$$
	for any different  $x=x_0x_1x_2\cdots,y=y_0y_1y_2\cdots\in\Sigma$ and $\rho(x,x)=0$.
	A \emph{subshifts} is a subsystem of a full shift.
	Let $A$ be a $K\times K$ matrix with entries 0,1.
	We say that $(\Sigma_A,\sigma)$ is a \emph{shift of finite type} if
	$$\Sigma_A=\{x=x_0x_1x_2\cdots\in\mathcal{A}^\N:A_{x_i,x_{i+1}}=1\text{ for each } i\in\N\}.$$
	Denote by $\Sigma_{A,n}$ the set of all the  admissible $n$-length words of $\Sigma_A$ and $\Sigma_{A,*}$ the set of all the admissible words of $\Sigma_A$.
	We say that $\Sigma_A$ is \emph{irreducible} if for any $i,j$, there exists $r>0$ such that $A^r_{i,j}>0$.
	We say that $\Sigma_A$ is \emph{aperiodic and irreducible} if there exists $r>0$ such that every entry of $A^r$ is strictly positive.
	
	Let $\nu$ be a $\sigma$-invariant ergodic measure on $\Sigma$. We call $\nu$ is \emph{quasi-Bernoulli} if there exists a constant $c>0$ such that for every $W,W'\in\Sigma_*$ with $WW'\in\Sigma_*$,
	$$c^{-1}\nu([W])\nu([W'])\le \nu([WW'])\le c\nu([W])\nu([W']).$$
	
	Then for an irreducible shift of finite type $(\Sigma_A,\sigma)$, we get the following theorems.
	
	\begin{theorem}\label{t:ly}
		Assume that $\Sigma_A$ is an irreducible shift of finite type. Then for any $\omega\in\Sigma_A$,
		$$h^B(\mathrm{LY}_\omega(\Sigma_A,\sigma))=h(\Sigma_A),$$
		that is,
		$$\dim_H(\mathrm{LY}_\omega(\Sigma_A,\sigma))=\dim_H(\Sigma_A).$$
	\end{theorem}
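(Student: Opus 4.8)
The plan is to prove the two displayed equalities together by reducing everything to Bowen entropy. Since the metric on $\Sigma_A$ is $\rho(x,y)=K^{-\min\{n\ge0:x_n\ne y_n\}}$, the Hausdorff dimension and Bowen entropy of any $Z\subseteq\Sigma_A$ are linked by $\dim_H Z = h^B(Z)/\log K$, so the two statements are equivalent and it suffices to show $h^B(\mathrm{LY}_\omega)=h(\Sigma_A)$. The upper bound $h^B(\mathrm{LY}_\omega)\le h^B(\Sigma_A)=h(\Sigma_A)$ is immediate from monotonicity, so the whole content is the lower bound. First I would translate the two defining conditions into symbolic language: writing $\rho(\sigma^n\omega,\sigma^n y)=K^{-m_n(y)}$ with $m_n(y)$ the length of the common prefix of $\sigma^n\omega$ and $\sigma^n y$, the condition $\liminf_n\rho(\sigma^n\omega,\sigma^n y)=0$ says that for every $L$ there are infinitely many $n$ with $y|_{[n,n+L)}=\omega|_{[n,n+L)}$, while $\limsup_n\rho(\sigma^n\omega,\sigma^n y)>0$ says merely that $y_n\ne\omega_n$ for infinitely many $n$.

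The core step is to produce a subset of $\mathrm{LY}_\omega$ carrying full entropy, and the most efficient route is to show that $\mathrm{LY}_\omega$ has full measure for the measure of maximal entropy $\mu$ (Parry measure), which is quasi-Bernoulli in the sense of the excerpt. The upper condition is easy: the set of $y$ that eventually agree with $\omega$ is countable, hence $\mu$-null, so $\mu$-a.e.\ $y$ satisfies $\limsup_n\rho(\sigma^n\omega,\sigma^n y)>0$. For the lower condition, fix $L$ and consider the position-dependent events $E_k=\{y:y|_{I_k}=\omega|_{I_k}\}$ on suitably spaced blocks $I_k$; by $\sigma$-invariance and the quasi-Bernoulli lower bound, $\mu(E_k)\ge c^{-1}e^{-Lh(\Sigma_A)}$, so $\sum_k\mu(E_k)=\infty$. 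Using the quasi-Bernoulli estimate again to bound the pairwise correlations $\mu(E_j\cap E_k)\le c^2\mu(E_j)\mu(E_k)$ and applying the Kochen--Stone form of the Borel--Cantelli lemma gives $\mu(\limsup_k E_k)>0$; since $\limsup_k E_k$ is a tail event and $\mu$ is exact, its measure lies in $\{0,1\}$, hence equals $1$. Intersecting over $L\in\N$ shows that $\mu$-a.e.\ $y$ also satisfies $\liminf_n\rho(\sigma^n\omega,\sigma^n y)=0$, so $\mu(\mathrm{LY}_\omega)=1$. The entropy distribution principle (equivalently the Brin--Katok lower bound applied on a set of positive measure) then yields $h^B(\mathrm{LY}_\omega)\ge h_\mu(\sigma)=h(\Sigma_A)$, completing the lower bound.

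I expect the main obstacle to be exactly the lower (liminf) condition, because the events $E_k$ involve the moving window $\omega|_{I_k}$ rather than a single fixed word, so the Birkhoff or Shannon--McMillan--Breiman arguments that handle the frequency of a fixed block do not apply directly; what is needed is decorrelation of these moving targets, which is where the quasi-Bernoulli property and a generalized Borel--Cantelli lemma enter. A second, more technical obstacle is that the theorem assumes only irreducibility, so $\mu$ need not be mixing and $\limsup_k E_k$ need not be genuinely tail-trivial; I would resolve this by passing to the power $\sigma^p$, where $p$ is the period of $\Sigma_A$, which is aperiodic and mixing on each cyclic class, running the Borel--Cantelli argument along blocks whose gaps are multiples of $p$, and then descending back to $\sigma$ --- this is precisely the ``transfer from almost everywhere to everywhere'' advertised in the abstract. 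As an alternative that sidesteps the mixing issues altogether and handles irreducibility directly, one can instead build an explicit full-entropy Cantor subset of $\mathrm{LY}_\omega$: interleave free coding blocks drawn from a maximal collection of admissible words (creating entropy and, on at least one coordinate each, disagreeing with $\omega$ to force $\limsup>0$) with windows on which $y$ is set equal to $\omega$ of lengths $L_k\to\infty$ placed at sparse positions $n_k$ (forcing $\liminf=0$), splicing the pieces together with transition words of bounded length $r$ supplied by irreducibility; choosing $n_k$ to grow fast enough that the matching windows have asymptotic density zero keeps the entropy of the construction equal to $h(\Sigma_A)$ by a standard Moran-set mass-distribution estimate.
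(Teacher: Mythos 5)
Your argument is correct, but it takes a genuinely different route from the paper. The paper reduces everything to the full shift: it quotes the result of \cite{YCL19} that $\mathcal{H}^2(\mathrm{LY}(\Sigma,\sigma))=1$, deduces $\mathcal{H}^1(\mathrm{LY}_\omega(\Sigma,\sigma))=1$ for almost every $\omega$ by Fubini, upgrades this to \emph{every} $\omega$ with the explicit symbol-permuting bi-Lipschitz map $\varphi_{\omega,\omega'}$ (Remark \ref{r:LY}), then transports the full-shift statement into an aperiodic irreducible $\Sigma_A$ via the coding $\varphi^M_\omega$ built from the $L=\#\Sigma_{A,M}$ admissible words of length $M$ and connector words of length $r$, obtaining the lower bound $\frac{\log L}{(M+r)\log K}\to\dim_H\Sigma_A$; the merely irreducible case is then handled by the spectral decomposition, exactly as in your last step. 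You instead work directly on $\Sigma_A$ with the Parry measure: the second-moment (Kochen--Stone) Borel--Cantelli argument with the moving targets $\omega|_{I_k}$ goes through because the Parry measure satisfies $\mu([W])\asymp e^{-|W|h(\Sigma_A)}$ and the quasi-Bernoulli correlation bound \emph{uniformly} over admissible words, so the estimate needs no typicality assumption on $\omega$ --- this is why your proof bypasses the almost-everywhere-to-everywhere transfer entirely and is self-contained (no appeal to \cite{YCL19}), and it yields the stronger intermediate statement that $\mathrm{LY}_\omega$ has full maximal-entropy measure for every $\omega$ in the aperiodic case, from which Lemma \ref{l:FL} gives the entropy bound at once. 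What the paper's route buys in exchange is the reuse of the coding machinery elsewhere (Theorem \ref{t:app1.5}) and the finer $\mathcal{H}^1$-measure conclusion on the full shift. Two small remarks on your write-up: positive measure of $\limsup_kE_k$ already suffices for the Frostman step, but since you must intersect over all $L$ you do genuinely need either the tail-triviality upgrade you invoke or, alternatively, a single application of Kochen--Stone with window lengths $L_k\to\infty$ chosen to grow slowly enough that $\sum_k e^{-L_kh(\Sigma_A)}=\infty$, which sidesteps exactness altogether; and when you descend from $\sigma^p$ on a cyclic class back to $\sigma$ you should record, as the paper does via the conjugacy $\phi_i$, that Bowen entropy rescales by the factor $\frac1p$ so that the final identity $h^B(\mathrm{LY}_\omega(\Sigma_A,\sigma))=h(\Sigma_A)$ comes out exactly.
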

	
	\begin{theorem}\label{t:ml}
		Assume that $\Sigma_A$ is an irreducible shift of finite type. Then for any $\omega\in\Sigma_A$,
		$$h^B(\mathrm{ML}_\omega(\Sigma_A,\sigma))=0\text{ and }h^P(\mathrm{ML}_\omega(\Sigma_A,\sigma))=h(\Sigma_A),$$
		that is,
		$$\dim_H(\mathrm{ML}_\omega(\Sigma_A,\sigma))=0\text{ and }\dim_P(\mathrm{ML}_\omega(\Sigma_A,\sigma))=\dim_H(\Sigma_A).$$
	\end{theorem}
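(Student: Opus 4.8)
The plan is to first re-encode membership in $\mathrm{ML}_\omega(\Sigma_A,\sigma)$ as an oscillation condition for a weighted Birkhoff average, so that the historic-set results proved earlier apply. Writing $a_n(y)=\frac1n\sum_{i=0}^{n-1}\rho(\sigma^i\omega,\sigma^i y)$ and letting $D_y=\{i\in\N:\omega_i\ne y_i\}$ be the disagreement set, I would record the elementary comparison $\frac1n\#(D_y\cap[0,n))\le a_n(y)\le \frac{K}{K-1}\cdot\frac1n\#(D_y\cap[0,n))+\frac{1}{n}$, which follows from $\rho(\sigma^i\omega,\sigma^i y)=K^{-\min\{k\ge0:\,i+k\in D_y\}}$ by summing the geometric contributions behind each disagreement. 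Consequently $y\in\mathrm{ML}_\omega$ if and only if the disagreement density $d_n(y)=\frac1n\#(D_y\cap[0,n))$ satisfies $\liminf_n d_n(y)=0$ and $\limsup_n d_n(y)>0$; this is exactly a historic set for the weights coming from $\omega$, i.e.\ the object the Olsen-type framework controls.

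For the Bowen-entropy statement I would use only the liminf condition, which gives $\mathrm{ML}_\omega\subseteq E_0:=\{y:\liminf_n d_n(y)=0\}$. Fixing $\epsilon>0$, every $y\in E_0$ admits arbitrarily large $n$ with $\#(D_y\cap[0,n))<\epsilon n$, so the cylinders $[y|_{[0,n)}]$ over such pairs cover $E_0$ by arbitrarily short cylinders. Since the number of admissible words of length $n$ differing from $\omega|_{[0,n)}$ in fewer than $\epsilon n$ places is at most $\sum_{k<\epsilon n}\binom nk(K-1)^k\le\exp(n(H(\epsilon)+\epsilon\log(K-1))+o(n))$, where $H(\epsilon)=-\epsilon\log\epsilon-(1-\epsilon)\log(1-\epsilon)$, the associated Carathéodory sum at exponent $s$ is controlled by $\exp(n(H(\epsilon)+\epsilon\log(K-1)-s\log K)+o(n))$. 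Letting $n\to\infty$ and then $\epsilon\to0$ forces $h^B(E_0)=0$, hence $h^B(\mathrm{ML}_\omega)=0$; note this bound is uniform in $\omega$, which realizes the transfer from almost every $\omega$ to every $\omega$ for this half.

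For the packing-entropy statement, which is the main point, I would produce a subset $F\subseteq\mathrm{ML}_\omega$ together with a probability measure $\mu$ on $F$ such that $\mu(\mathrm{ML}_\omega)=1$ and the upper local entropy $\limsup_n-\frac1n\log\mu([y|_{[0,n)}])\ge h(\Sigma_A)$ for $\mu$-a.e.\ $y$; by the variational lower bound for packing entropy via local entropy of measures this yields $h^P(\mathrm{ML}_\omega)\ge h(\Sigma_A)$, and the reverse inequality is automatic from $\mathrm{ML}_\omega\subseteq\Sigma_A$. Concretely, I would build $F$ by concatenating, along a very rapidly increasing sequence of lengths $(\ell_k)$, alternating $\omega$-matching blocks (on which $y$ is forced to equal $\omega$) and free blocks (on which $y$ ranges over admissible continuations distributed by the quasi-Bernoulli measure $\nu$ of maximal entropy), joined by bounded connecting words supplied by irreducibility of $\Sigma_A$. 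Choosing each block much longer than the total preceding length yields two separated scale subsequences: at the ends of matching blocks the density tends to $0$ (all disagreements lie in earlier, negligible free blocks), giving $\liminf_n d_n=0$, while at the ends of free blocks it is bounded below by the positive $\nu$-probability of disagreeing with $\omega$, giving $\limsup_n d_n>0$; thus $\mu$-a.e.\ $y\in\mathrm{ML}_\omega$. Since $\mu$ spreads on the free blocks like $\nu$, the quasi-Bernoulli estimate makes $-\frac1n\log\mu([y|_{[0,n)}])$ approach $h(\Sigma_A)$ along the free-block ends, so the upper local entropy is full.

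The main obstacle is this last construction: the block lengths must be chosen so that the two density subsequences genuinely separate (liminf $0$ versus limsup positive) while the entropy accumulated on the free blocks saturates $h(\Sigma_A)$ in the limsup, and the cylinder measures across block boundaries must be controlled via the quasi-Bernoulli constant so that the local-entropy computation is clean. The delicate point is that packing entropy, unlike Bowen entropy, is insensitive to the low-complexity matching blocks and reads off the full-complexity free blocks along their terminal scales; making this precise is exactly where the earlier packing-entropy theorem for historic sets with typical weights is invoked, and since the free blocks and connectors do not depend on $\omega$, the conclusion passes from $\nu$-typical $\omega$ to every $\omega\in\Sigma_A$, completing the transfer.
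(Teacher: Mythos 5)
Your proposal is essentially correct but follows a genuinely different and more elementary route than the paper. The paper proves the statement by (i) invoking the multifractal machinery: it writes $\mathrm{ML}_\omega(\Sigma_A,\sigma)=\bigcup_{\delta>0}\Delta^\omega_{\mathrm{sup}}([0,\delta])$ with $f=\rho$, gets $h^B=0$ from the conditional-pressure bound $\inf_p P_\nu(p\rho)\le 0$ and $h^P=h(\Sigma_A)$ from the Moran construction of Lemma \ref{l:Packing lowerbound} applied to $\Delta^\omega_{\mathrm{equ}}(\mathcal{P}_A)$ --- all of which holds only for $\nu$-a.e.\ $\omega$ because it rests on disintegration and the conditional Shannon--McMillan--Breiman theorem; (ii) transferring from a.e.\ $\omega$ to every $\omega$ via the explicit bi-Lipschitz-type maps $\varphi^M_{\omega,\omega'}$ with dimension distortion $\tfrac{M}{M+r}\to 1$; and (iii) reducing the irreducible case to the aperiodic one by spectral decomposition. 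You instead reduce the mean Li--Yorke condition to the disagreement density $d_n(y)$ (your comparison $d_n\le a_n\le\tfrac{K}{K-1}d_n+O(1/n)$ is correct), and then both halves become uniform in $\omega$: the Bowen bound follows from the cover of $\{y:\liminf d_n=0\}$ by cylinders with at most $\epsilon n$ disagreements, whose count $\sum_{k<\epsilon n}\binom nk(K-1)^k$ is exactly the binomial estimate the paper hides inside $P_\nu(p\rho)\le\log((K-1)e^p+1)$; and the packing bound follows from a direct Moran set alternating $\omega$-matching blocks with free blocks, certified by the distribution principle (Lemma \ref{l:DP}) along the free-block end scales. What your approach buys is the complete elimination of the measure $\nu$, the a.e.-to-everywhere transfer maps, and (essentially) the spectral decomposition; what it loses is the finer information of Theorems \ref{t:1} and \ref{t:4} about general historic sets, which the paper's route produces as a byproduct. (One small slip: in the Bowen computation the exponent should be $-sn$, not $-sn\log K$, unless you are computing $\dim_H$ directly.)

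Two points need shoring up before this is a complete proof. First, your claim that the disagreement density on a free block is bounded below ``by the positive $\nu$-probability of disagreeing with $\omega$'' is an almost-everywhere statement requiring a law of large numbers inside each block; it is cleaner to prune from each free block the admissible words agreeing with $\omega$ in more than a $(1-\epsilon)$ fraction of places --- by your own counting estimate these number only $e^{(H(\epsilon)+\epsilon\log(K-1))\ell}\ll e^{\ell h(\Sigma_A)}$ when $h(\Sigma_A)>0$ (and the theorem is trivial otherwise) --- after which every point of the Moran set lies in $\mathrm{ML}_\omega$ and no exceptional set is needed. Second, your closing sentence, which appeals to ``the earlier packing-entropy theorem for historic sets with typical weights'' and transfers to every $\omega$ ``since the free blocks and connectors do not depend on $\omega$,'' is not a valid argument as stated: the matching blocks and the set $\mathrm{ML}_\omega$ itself do depend on $\omega$, and a transfer from typical to arbitrary $\omega$ requires an actual map between the sets, as in the paper's $\varphi^M_{\omega,\omega'}$. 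Fortunately you do not need that sentence: the direct construction, with the pruning above and with connecting words of bounded (possibly variable, in the periodic irreducible case) length supplied by irreducibility, already works verbatim for every $\omega\in\Sigma_A$.
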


	In order to investigate the size of the sets consisting of mean Li--Yorke pair, we focus on the product of two full shifts. Let $\Omega=\Lambda^\N$ and $\Sigma=\mathcal{A}^\N$ be two full shifts, where $\mathcal{A}$ and $\Lambda$ are two finite alphabets.
	Define $\Gamma:=\Omega\times\Sigma$ and $\Gamma_A:=\Omega\times\Sigma_A$, where $\Sigma_A$ is an aperiodic and irreducible subshift of finite type with matrix $A$.
	For convenience, we denote the shift map by same notation $\sigma$ on different subshifts.
	Let $\Pi:\Gamma_A\To\Omega$ be the projection, that is, $\Pi(\omega,x)=\omega$.
	Denote by $\mathcal{M}_\nu(\Gamma)$(or $\mathcal{E}_\nu(\Gamma)$) the set of all the $\sigma$-invariant (ergodic) measures on $\Gamma$ with marginal $\nu$, that is, $\Pi_*(\mu)=\nu$.
	We define $\mathcal{M}_\nu(\Gamma_A)$ and $\mathcal{E}_\nu(\Gamma_A)$ on $\Gamma_A$ similarly.
	
	For a continuous function $f:\Gamma_A\To \R^d$, we say $f$ has \emph{bounded variation} if
	$$\sum_{i=0}^\infty\mathrm{var}_i f<\infty \text{ where }
	\mathrm{var}_i f:=\sup_{
		\substack{W\in\Gamma_{A,i}\\(\omega,x),(\omega',x')\in[W]}
	}|f(\omega,x)-f(\omega',x')|.\footnote{The symbol $|\cdot|$ denote the Euclidean norm of $\R^d$.}$$
	Let $$\mathcal{P}_A=\{\alpha\in\R^d:\text{there exists }\mu\in\mathcal{M}_\nu(\Gamma_A)\text{ such that }\int f d\mu=\alpha\}.$$
	Indeed, $\mathcal{P}_A$ is bounded by $\sup_{(\omega,x)\in\Gamma_A}|f(\omega,x)|$.
	Denote by $\mathcal{P}_A^o$ the interior of $\mathcal{P}_A$.	
	Given a continuous function $f:\Gamma_A\To \R^d$, for $\omega\in\Omega$ and $x\in\Sigma_A$, let
	$$S_n^mf(\omega,x):=\sum_{i=n}^{m-1}f(\sigma^i\omega,\sigma^ix)$$
	and denote by $A(\omega,x)$ the set of all the limit points of
	$\{\frac1nS_0^nf(\omega,x):n\in\N\}$.
	It is known that $A(\omega,x)$ is a compact and connected subset of $\R^d$.
	
	Recently, B\'ar\'any, Rams and Shi(\cite{BRS20}) investigate the Bowen entropy of the set $\{x\in\Sigma_A:A(\omega,x)=\{\alpha\}\}$.
	This notion can be seen as the notion of level sets in multifractal analysis. It is the notion on the multifractal framework introduced by Olsen. So we investigate the generalized multifractal framework(\cite{Ols03}, \cite{OW07}, \cite{Ols06} and \cite{Ols08}).
	For $C\subset\R^d$, let
	$$\Delta^\omega_{\mathrm{cap}}(C)=\{x\in\Sigma_A:A(\omega,x)\cap C\neq\emptyset\},$$
	$$\Delta^\omega_{\mathrm{sub}}(C)=\{x\in\Sigma_A:A(\omega,x)\subset C\}$$
	and
	$$\Delta^\omega_{\mathrm{sup}}(C)=\{x\in\Sigma_A:A(\omega,x)\supset C\}.$$
	Moreover, if $C$ is a compact and connected subset of $\R^d$, let
	$$\Delta^\omega_{\mathrm{equ}}(C)=\{x\in\Sigma_A:A(\omega,x)=C\}.$$
	Then we will show the Bowen entropy of those sets.
	\begin{theorem}\label{t:1}
		Let $\Sigma_A\subset\Sigma$ be an aperiodic and irreducible shift of finite type, and let $f:\Gamma_A\To \R^d$ be a continuous map with bounded variation. If $\nu$ is a quasi-Bernoulli $\sigma$-invariant ergodic measure on $\Omega$, then for any nonempty subset  $C\subset\mathcal{P}_A^o$ and $\nu$-almost everywhere $\omega\in\Omega$, we have following statements:
		\begin{itemize}
			\item[(1)] For $\Delta^\omega_{\mathrm{cap}}(C)$ and $\Delta^\omega_{\mathrm{sub}}(C)$,
			$$
			\begin{aligned}
			&h^B(\Delta^\omega_{\mathrm{cap}}(C))=h^B(\Delta^\omega_{\mathrm{sub}}(C))\\
			=&\sup\{h_\mu:\mu\in\mathcal{E}_\nu(\Gamma_A)\text{ and }\int f d\mu\in C\}-h_\nu\\
			=&\sup\{h_\mu:\mu\in\mathcal{M}_\nu(\Gamma_A)\text{ and }\int f d\mu\in C\}-h_\nu\\
			=&\sup_{\alpha\in C}\inf_{p\in\R^d}P_{\nu}(\langle p,f-\alpha\rangle);
			\end{aligned}
			$$
			\item[(2)] For $\Delta^\omega_{\mathrm{sup}}(C)$,
			$$
			\begin{aligned}
			h^B(\Delta^\omega_{\mathrm{sup}}(C))
			=&\inf_{\alpha\in C}\sup\{h_\mu:\mu\in\mathcal{E}_\nu(\Gamma_A)\text{ and }\int f d\mu=\alpha\}-h_\nu\\
			=&\inf_{\alpha\in C}\sup\{h_\mu:\mu\in\mathcal{M}_\nu(\Gamma_A)\text{ and }\int f d\mu=\alpha\}-h_\nu\\
			=&\inf_{\alpha\in C,p\in\R^d}P_{\nu}(\langle p,f-\alpha\rangle);
			\end{aligned}
			$$
			\item[(3)] If $C$ is compact and connected, then
			$$
			\begin{aligned}
			h^B(\Delta^\omega_{\mathrm{equ}}(C))
			=&\inf_{\alpha\in  C}\sup\{h_\mu:\mu\in\mathcal{E}_\nu(\Gamma_A)\text{ and }\int f d\mu=\alpha\}-h_\nu\\
			=&\inf_{\alpha\in C}\sup\{h_\mu:\mu\in\mathcal{M}_\nu(\Gamma_A)\text{ and }\int f d\mu=\alpha\}-h_\nu\\
			=&\inf_{\alpha\in C,p\in\R^d}P_{\nu}(\langle p,f-\alpha\rangle).
			\end{aligned}
			$$
		\end{itemize}
		
	\end{theorem}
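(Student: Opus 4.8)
The plan is to separate the statement into two essentially independent parts: the \emph{thermodynamic identities} (equality of the last three expressions in each of (1)--(3)), which involve only measures and pressure and hold for every $\omega$, and the \emph{dimension-theoretic computation} (equality of the Bowen entropies with the variational expression), which is where the almost-everywhere-in-$\omega$ statement and the quasi-Bernoulli hypothesis enter. For $\alpha\in\mathcal{P}_A^o$ I abbreviate
$$E^{\mathcal{E}}(\alpha):=\sup\{h_\mu-h_\nu:\mu\in\mathcal{E}_\nu(\Gamma_A),\ \int f\,d\mu=\alpha\},\quad E^{\mathcal{M}}(\alpha):=\sup\{h_\mu-h_\nu:\mu\in\mathcal{M}_\nu(\Gamma_A),\ \int f\,d\mu=\alpha\}.$$
To establish the thermodynamic identities, I would use that $\mathcal{M}_\nu(\Gamma_A)$ is convex and weak$^*$ compact, that $\mu\mapsto h_\mu-h_\nu$ is affine and upper semicontinuous, and that by the relative variational principle $P_\nu(\langle p,f-\alpha\rangle)=\sup_{\mu}(h_\mu-h_\nu+\langle p,\int f\,d\mu-\alpha\rangle)$; Sion's minimax theorem then permits exchanging $\inf_p$ and $\sup_\mu$, giving
$$\inf_{p\in\R^d}P_\nu(\langle p,f-\alpha\rangle)=\sup_{\mu\in\mathcal{M}_\nu(\Gamma_A)}\inf_{p\in\R^d}\big(h_\mu-h_\nu+\langle p,\int f\,d\mu-\alpha\rangle\big)=E^{\mathcal{M}}(\alpha),$$
the last step because the inner infimum is $0$ when $\int f\,d\mu=\alpha$ and $-\infty$ otherwise (feasibility and finiteness being guaranteed by $\alpha\in\mathcal{P}_A^o$). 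Partitioning over the value of $\int f\,d\mu$ converts $\sup\{\cdot:\int f\,d\mu\in C\}$ into $\sup_{\alpha\in C}E^{\mathcal{M}}(\alpha)$ for (1), and into the corresponding $\inf_{\alpha\in C}$ for (2)--(3). Finally $E^{\mathcal{E}}=E^{\mathcal{M}}$: the infimum over $p$ is attained at some $p^\ast$ by coercivity (again using $\alpha\in\mathcal{P}_A^o$), and since $f$ has bounded variation and $\Sigma_A$ is aperiodic and irreducible, the relative equilibrium state of $\langle p^\ast,f\rangle$ over $\nu$ is a Gibbs measure, hence ergodic, lies in $\mathcal{E}_\nu(\Gamma_A)$, realizes the supremum and has integral $\alpha$.

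Next I would compute the Bowen entropies and match them with $E:=E^{\mathcal{E}}=E^{\mathcal{M}}$. The four sets are ordered by $\Delta^\omega_{\mathrm{equ}}(C)\subset\Delta^\omega_{\mathrm{sub}}(C)\subset\Delta^\omega_{\mathrm{cap}}(C)$ and $\Delta^\omega_{\mathrm{equ}}(C)\subset\Delta^\omega_{\mathrm{sup}}(C)\subset\Delta^\omega_{\mathrm{cap}}(\{\alpha\})$ for every $\alpha\in C$, so monotonicity of $h^B$ reduces the task to two extremal bounds. For the lower bound in (1), $\Delta^\omega_{\mathrm{equ}}(\{\alpha\})\subset\Delta^\omega_{\mathrm{sub}}(C)$ whenever $\alpha\in C$, so the singleton result of B\'ar\'any--Rams--Shi \cite{BRS20}, namely $h^B(\Delta^\omega_{\mathrm{equ}}(\{\alpha\}))=E(\alpha)$ for $\nu$-a.e.\ $\omega$, yields $h^B(\Delta^\omega_{\mathrm{sub}}(C))\ge\sup_{\alpha\in C}E(\alpha)$. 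For the upper bound in (1) I would run a covering argument on the fiber over $\omega$: cover $C$ by finitely many balls $B(\alpha_j,\delta)$; any $x\in\Delta^\omega_{\mathrm{cap}}(C)$ satisfies $\frac1n S_0^n f(\omega,x)\in B(\alpha_j,\delta)$ for some $j$ and infinitely many $n$, and a relative large-deviation word count, weighted against the $\nu$-typicality of $\omega$, bounds the Bowen entropy of each ``infinitely-often-near-$\alpha_j$'' set by $\sup_{|\beta-\alpha_j|\le\delta}E(\beta)$; taking the finite maximum and letting $\delta\to0$, upper semicontinuity of $E$ gives $h^B(\Delta^\omega_{\mathrm{cap}}(C))\le\sup_{\alpha\in C}E(\alpha)$, which with the inclusion $\Delta^\omega_{\mathrm{sub}}\subset\Delta^\omega_{\mathrm{cap}}$ proves (1).

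For (2) and (3) the upper bound is immediate from $\Delta^\omega_{\mathrm{sup}}(C)\subset\Delta^\omega_{\mathrm{cap}}(\{\alpha\})$ and the singleton case of the covering bound, giving $h^B\le\inf_{\alpha\in C}E(\alpha)$. The matching lower bound is the crux: I would construct a Moran-type Cantor subset of $\Delta^\omega_{\mathrm{equ}}(C)$ (hence of $\Delta^\omega_{\mathrm{sup}}(C)$) of Bowen entropy at least $\inf_{\alpha\in C}E(\alpha)$. Fixing a sequence $(\beta_k)$ dense in $C$ and near-optimal ergodic measures for each $\beta_k$, one concatenates admissible blocks in $\Sigma_A$ that drive the weighted average successively close to $\beta_1,\beta_2,\dots$, with stage lengths growing slowly enough that transitions are asymptotically negligible (so no spurious limit points arise) yet fast enough that the induced relative entropy is not diluted below $\inf_k E(\beta_k)=\inf_{\alpha\in C}E(\alpha)$; connectedness of $C$ ensures the intermediate averages stay in a neighbourhood of $C$, so that $A(\omega,x)=C$ exactly. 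A $\nu$-weighted word count along this Cantor set together with the mass distribution principle then yields the stated lower bound.

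The main obstacle is twofold, and both parts hinge on the quasi-Bernoulli property of $\nu$. First, the theorem demands a single $\nu$-full-measure set of $\omega$ valid \emph{simultaneously} for all $\alpha\in C$, which may be uncountable, so the singleton estimates of \cite{BRS20} must be upgraded to hold uniformly in $\alpha$; I would achieve this by deriving the word-counting and mass-distribution estimates from the Shannon--McMillan--Breiman behaviour of $\nu$ alone, controlled uniformly through the quasi-Bernoulli constant, so that the exceptional $\omega$-set is independent of $\alpha$. Second, the equidistribution construction for (2)--(3) requires delicate bookkeeping to realize the limit set \emph{exactly} $C$ while preserving the worst-case entropy $\inf_{\alpha\in C}E(\alpha)$: the tension between visiting a dense set of targets (which forces transitions) and keeping the relative entropy high (which forbids frequent transitions) must be resolved by a careful choice of rapidly increasing stage lengths, and I expect this to be the most technical step of the whole argument.
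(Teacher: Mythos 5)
Your overall strategy coincides with the paper's: the three variational expressions are identified via the singleton thermodynamic formalism (the paper imports this wholesale from the cited B\'ar\'any--Rams--Shi theorem rather than re-deriving it by a minimax argument, but your route is legitimate); the upper bounds come from pressure-weighted covering sums over a countable dense family of targets $\alpha_k$, with the exceptional $\omega$-set controlled by countability of the data plus an exhaustion $C=\bigcup_n C_n$ keeping a definite distance from the boundary of $\mathcal{P}_A^o$; the lower bound in (1) is monotonicity plus the singleton result; and the lower bounds in (2)--(3) are Moran-type concatenation constructions with slowly varying block lengths.

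The one step that would fail as written is your lower bound for (2). There $C$ is an arbitrary nonempty subset of $\mathcal{P}_A^o$, so $\Delta^\omega_{\mathrm{equ}}(C)$ is typically empty (every $A(\omega,x)$ is compact and connected), and your appeal to ``connectedness of $C$'' to force $A(\omega,x)=C$ has no hypothesis to lean on in that part. The construction must target $\Delta^\omega_{\mathrm{sup}}(C)$ directly: visiting a countable dense subset $\{\alpha_k\}\subset C$ infinitely often already yields $A(\omega,x)\supset\overline{C}\supset C$ with no structural assumption on $C$, which is exactly what the paper's Lemma \ref{l:Moren} does. The stronger requirement $A(\omega,x)=C$ --- and hence the careful re-ordering of the targets so that consecutive ones are $\epsilon_L$-close, which is where compactness and connectedness genuinely enter --- is reserved for part (3) (the paper's Lemma \ref{l:Moren2}). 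With that separation made explicit, your argument matches the paper's.
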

	For the packing entropy, we will show for $\Delta^\omega_{\mathrm{sub}}(C)$ and $\Delta^\omega_{\mathrm{equ}}(C)$.
	\begin{theorem}\label{t:4}
	Let $\Sigma_A\subset \Sigma$ be an aperiodic and irreducible shift of finite type, and let $f:\Gamma_A\To \R^d$ be a continuous map with bounded variation. If $\nu$ is a quasi-Bernoulli $\sigma$-invariant ergodic measure on $\Omega$, then for any nonempty subset $C\subset\mathcal{P}_A^o$ and $\nu$-almost everywhere $\omega\in\Omega$, we have following statements:
	\begin{itemize}
		\item[(1)] For $\Delta^\omega_{\mathrm{sub}}(C)$,
		$$
		\begin{aligned}
		h^P(\Delta^\omega_{\mathrm{sub}}(C))
		=&\sup\{h_\mu:\mu\in\mathcal{E}_\nu(\Gamma_A)\text{ and }\int f d\mu\in C\}-h_\nu\\
		=&\sup\{h_\mu:\mu\in\mathcal{M}_\nu(\Gamma_A)\text{ and }\int f d\mu\in C\}-h_\nu\\
		=&\sup_{\alpha\in C}\inf_{p\in\R^d}P_{\nu}(\langle p,f-\alpha\rangle);
		\end{aligned}
		$$
		\item[(2)] If $C$ is compact and connected, then $$
		\begin{aligned}
		h^P(\Delta^\omega_{\mathrm{equ}}(C))
		=&\sup\{h_\mu:\mu\in\mathcal{E}_\nu(\Gamma_A)\text{ and }\int f d\mu\in C\}-h_\nu\\
		=&\sup\{h_\mu:\mu\in\mathcal{M}_\nu(\Gamma_A)\text{ and }\int f d\mu\in C\}-h_\nu\\
		=&\sup_{\alpha\in C}\inf_{p\in\R^d}P_{\nu}(\langle p,f-\alpha\rangle).
		\end{aligned}
		$$
	\end{itemize}
	\end{theorem}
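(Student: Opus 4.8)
The plan is to treat the lower and upper bounds separately and to reduce as much as possible to Theorem~\ref{t:1}, isolating the genuinely new input. First note that three of the four required identities follow at once. Since packing entropy dominates Bowen entropy, $h^P(Z)\ge h^B(Z)$ for every $Z\subset\Sigma_A$, the lower bound $h^P(\Delta^\omega_{\mathrm{sub}}(C))\ge\sup\{h_\mu:\mu\in\mathcal{E}_\nu(\Gamma_A),\ \int f\,d\mu\in C\}-h_\nu$ is immediate from Theorem~\ref{t:1}(1). Moreover $\Delta^\omega_{\mathrm{equ}}(C)\subset\Delta^\omega_{\mathrm{sub}}(C)$, so once the upper bound for $\Delta^\omega_{\mathrm{sub}}(C)$ is proved, the upper bound for $\Delta^\omega_{\mathrm{equ}}(C)$ follows by monotonicity. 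Finally the coincidence of the three analytic expressions on the right-hand side (the $\mathcal{E}_\nu$-form, the $\mathcal{M}_\nu$-form, and the pressure form $\sup_{\alpha\in C}\inf_{p}P_\nu(\langle p,f-\alpha\rangle)$) is exactly the identity already established in Theorem~\ref{t:1}(1), so it may be quoted. Thus only two things remain: the upper bound for $\Delta^\omega_{\mathrm{sub}}(C)$ and the lower bound for $\Delta^\omega_{\mathrm{equ}}(C)$.

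For the upper bound for $\Delta^\omega_{\mathrm{sub}}(C)$ I would exploit the two structural features of packing entropy: it is countably stable, $h^P(\bigcup_j Z_j)=\sup_j h^P(Z_j)$, and it is dominated by the upper capacity (box-counting) entropy $\overline{Ch}$. Writing $U_\delta$ for the open $\delta$-neighbourhood of $C$, the inclusion $A(\omega,x)\subset C$ forces $\tfrac1n S_0^n f(\omega,x)\in U_{1/j}$ for all large $n$, so
\begin{equation*}
\Delta^\omega_{\mathrm{sub}}(C)\subset\bigcup_{N\ge 1}Z_{j,N},\qquad Z_{j,N}:=\Big\{x\in\Sigma_A:\tfrac1n S_0^n f(\omega,x)\in U_{1/j}\ \text{for all } n\ge N\Big\},
\end{equation*}
for every $j$. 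By countable stability and $h^P\le\overline{Ch}$ this gives $h^P(\Delta^\omega_{\mathrm{sub}}(C))\le\sup_N\overline{Ch}(Z_{j,N})$, and $\overline{Ch}(Z_{j,N})$ is controlled by $\limsup_n\tfrac1n\log\#\{W\in\Sigma_{A,n}:[W]\cap Z_{j,N}\neq\emptyset\}$. Since $f$ has bounded variation, every such cylinder carries an average lying in $U_{1/j}$ up to a vanishing error, so this count is bounded by the quenched large-deviation/pressure estimate for $\nu$-a.e.\ $\omega$ that already underlies the Bowen upper bound of Theorem~\ref{t:1}; it yields $\overline{Ch}(Z_{j,N})\le\sup\{h_\mu:\int f\,d\mu\in\overline{U_{1/j}}\}-h_\nu$ uniformly in $N$. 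Letting $j\to\infty$ and using upper semicontinuity of $\mu\mapsto h_\mu$ together with continuity of the entropy spectrum on $\mathcal{P}_A^o$ (which gives $\sup\{h_\mu:\int f\,d\mu\in\overline C\}=\sup\{h_\mu:\int f\,d\mu\in C\}$ because $C\subset\mathcal{P}_A^o$) completes the upper bound.

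The heart of the matter is the lower bound for $\Delta^\omega_{\mathrm{equ}}(C)$, where the packing value $\sup_{\alpha\in C}(\sup\{h_\mu:\int f\,d\mu=\alpha\}-h_\nu)$ strictly exceeds the Bowen value $\inf_{\alpha\in C}(\cdots)$ of Theorem~\ref{t:1}(3), so no comparison argument can work and a direct construction is needed. Fix $\alpha^\ast\in C$ and $\mu^\ast\in\mathcal{E}_\nu(\Gamma_A)$ with $\int f\,d\mu^\ast=\alpha^\ast$ and $h_{\mu^\ast}-h_\nu$ within $\epsilon$ of $\sup_{\alpha\in C}(\sup\{h_\mu:\int f\,d\mu=\alpha\}-h_\nu)$, and fix a dense sequence $\{\beta_i\}\subset C$ with realizing measures $\mu^{(i)}$. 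Over a $\nu$-typical $\omega$ I would run the same fibrewise Moran/concatenation scheme as in the lower bound of Theorem~\ref{t:1}, concatenating relatively-typical words for a prescribed sequence of stage measures $\theta_k\in\{\mu^\ast,\mu^{(1)},\mu^{(2)},\dots\}$ whose block lengths grow so fast that each stage dwarfs the totality of the previous ones. The stages realizing $\mu^\ast$ are scheduled to recur, while the stages realizing the $\beta_i$ force $\tfrac1n S_0^n f$ to return near every $\beta_i$; the connectedness of $C$ lets the transitional averages move inside a shrinking neighbourhood of $C$, so that every constructed point has limit set exactly $C$ and therefore lies in $\Delta^\omega_{\mathrm{equ}}(C)$. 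Equipping this Cantor set with the natural Moran measure $m$, the relative Shannon--McMillan--Breiman theorem (valid for $\nu$-a.e.\ $\omega$, as in Theorem~\ref{t:1}) shows that at the scales $n$ ending a long $\mu^\ast$-stage one has $-\tfrac1n\log m\big([x|_{[0,n)}]\big)\to h_{\mu^\ast}-h_\nu$; hence $\limsup_n-\tfrac1n\log m\big([x|_{[0,n)}]\big)\ge h_{\mu^\ast}-h_\nu$ for every $x$ in the set, and the packing analogue of the mass distribution principle gives $h^P(\Delta^\omega_{\mathrm{equ}}(C))\ge h_{\mu^\ast}-h_\nu$. Taking the supremum over $\alpha^\ast$ and $\mu^\ast$ and letting $\epsilon\to 0$ yields the claim.

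I expect the main obstacle to be precisely this last construction: the scheduling of the stage lengths must simultaneously (i) make the long $\mu^\ast$-stages dominate at their terminal scales, so that the packing $\limsup$ picks up the \emph{largest} admissible entropy rather than being dragged down to the $\liminf$ governing the Bowen value, and (ii) keep the running averages close enough to $C$, using its connectedness, that the limit set is neither too small nor too large. Controlling both features of the oscillation at once, fibrewise for $\nu$-a.e.\ $\omega$ and with the relative entropy counting replacing the usual absolute one, is the delicate part; the upper bound and the three-way identification of the right-hand sides are comparatively routine given Theorem~\ref{t:1}.
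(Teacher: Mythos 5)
Your proposal follows essentially the same route as the paper's. The upper bound for $h^P(\Delta^\omega_{\mathrm{sub}}(C))$ via the cover by sets of the form $\{x:\tfrac1nS_0^nf(\omega,x)\in U_{1/j}\text{ for all }n\ge N\}$, countable stability of packing entropy, and domination by upper capacity entropy is exactly Part 1 of the paper's Lemma \ref{l:Packing upperbound}, and the control of the resulting counting function by the quenched pressure is its Part 2. The lower bound for $h^P(\Delta^\omega_{\mathrm{equ}}(C))$ --- a fibrewise Moran concatenation in which the blocks drawn from a near-maximal-entropy measure recur and dominate at their terminal scales, combined with the packing distribution principle (Lemma \ref{l:DP}), which only requires the measure estimate along a subsequence of scales --- is precisely the mechanism of Lemma \ref{l:Packing lowerbound}; the paper encodes your ``scheduling'' in condition (iv) on the sequence $\{\alpha_{L,k}\}$, which forces the terminal block of each level to realize $\sup\{h_\mu:\int f\,d\mu\in C\}-\epsilon_L$. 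Your shortcut for the lower bound in part (1), via $h^P\ge h^B$ and Theorem \ref{t:1}(1), is a harmless simplification of the paper's appeal to the singleton case of Lemma \ref{l:Packing lowerbound}.

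One step you should not wave through: the identity $\sup\{h_\mu:\int f\,d\mu\in\overline C\}=\sup\{h_\mu:\int f\,d\mu\in C\}$ is clear (by continuity of the concave entropy spectrum on the open set $\mathcal{P}_A^o$) only when $\overline C\subset\mathcal{P}_A^o$; for an arbitrary $C\subset\mathcal{P}_A^o$ whose closure meets $\partial\mathcal{P}_A$ it is not justified as stated, since the spectrum need not be continuous at the boundary of $\mathcal{P}_A$. The paper sidesteps this entirely by first proving the upper bound for $C$ at positive distance from $\R^d\setminus\mathcal{P}_A^o$ (where Lemma \ref{l:p's_bound} uniformly bounds the multipliers $|p_k|$, so the pressure estimate closes without ever passing to $\overline C$), and then exhausting a general $C$ by $C_n=\{\alpha\in C:\mathrm{dist}(\alpha,\R^d\setminus\mathcal{P}_A^o)>\tfrac1n\}$, using compactness of $A(\omega,x)$ to get $\Delta^\omega_{\mathrm{sub}}(C)=\bigcup_n\Delta^\omega_{\mathrm{sub}}(C_n)$ and countable stability of $h^P$. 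With that exhaustion inserted your upper bound goes through. A second, smaller omission: in the packing lower bound the Moran set must sit inside $\Sigma_A$ itself, so the paper splices connecting words of length $r$ (using aperiodicity) between consecutive blocks, rather than reusing the letter-adjusting map $\pi$ of the Bowen argument, which is only shown to preserve $h^B$.
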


	This paper is organized as follow. Section \ref{s:2} is an introduction of some notations and their properties. In Section \ref{s:3}, we focus on the Bowen entropy of $\Delta^\omega_{\mathrm{cap}}(C)$ and $\Delta^\omega_{\mathrm{sub}}(C)$. In Section \ref{s:4}, a basic construction is shown for the lower bound of Bowen entropy of $\Delta^\omega_{\mathrm{sup}}(C)$. Section \ref{s:5} is the calculation of Bowen entropy of $\Delta^\omega_{\mathrm{equ}}(C)$. In Section \ref{s:6}, we turn to the packing entropy of $\Delta^\omega_{\mathrm{sub}}(C)$ and $\Delta^\omega_{\mathrm{equ}}(C)$. In Section \ref{s:7}, based on our results, we prove Theorem \ref{t:ml}. Section \ref{s:8} is the proof of Theorem \ref{t:ly}.
	
\section{Preliminaries}\label{s:2}

For $Z\subset\Sigma=\mathcal{A}^\N$, we introduce three definitions of entropy for a subset (See details in \cite{FH12,ZCC12}).
Recall the upper capacity entropy of $Z$ is defined as follows.
For $\epsilon>0$, a subset $E\subset Z$ is called a \emph{$(n,\epsilon)$-separated set of $Z$} if $\rho_n(x,y)>\epsilon$ for any different points $x\neq y\in E$, and a \emph{$(n,\epsilon)$-spanning set of $Z$} if $\bigcup_{x\in E}\{y\in\Sigma:\rho_n(x,y)<\epsilon\}\supset Z$.
\footnote{We set $\rho_n(x,y):=\max_{0\le i<n}\rho(\sigma^ix,\sigma^iy)$. }
Let $s_n(Z,\epsilon)$ be the maximum cardinality of $(n,\epsilon)$-separated set of $Z$, and $r_n(Z,\epsilon)$ be the minimum cardinality of $(n,\epsilon)$-spanning set of $Z$. Define the \emph{upper capacity entropy of $Z$} is
$$h^{UC}(Z)=\lim_{\epsilon\To0}\limsup_{n\To\infty}\frac1n\log r_n(Z,\epsilon)=\lim_{\epsilon\To0}\limsup_{n\To\infty}\frac1n\log s_n(Z,\epsilon).$$

Recall that the Bowen topological entropy of $E$ is defined as follows. For $s\ge 0$ and $N\in\N$, define
$$\mathcal{H}^s_{N}(Z)=\inf_{\mathcal{W}}\sum_{W\in\mathcal{W}}e^{-s|W|},$$
where the infimum is taken over all covers $\mathcal{W}$ of $Z$, consisting of cylinders whose length is larger than $N$.
Define $\mathcal{H}^s(Z)=\lim_{N\To\infty}\mathcal{H}^s_{N}(Z)\in[0,+\infty]$,
and the \emph{Bowen topological entropy} of $Z$
$$h^B(Z)=\inf\{s\ge 0:\mathcal{H}^s(Z)=0\}.$$

Note that $\dim_H(Z)=\frac{h^B(Z)}{\log K}$.
The upper bound of $h^B(Z)$ is given by
$$h^B(Z)\le \liminf_{n\To\infty}\frac1n\log\#\{W\in\Sigma_n:[W]\cap Z\neq\emptyset\}.$$
The lower bound can get by a version of Frostman Lemma as follows.
\begin{lemma}\label{l:FL}\cite[Lemma 3.1]{BRS20}
	Let $Z\subset\Sigma$ and $s\ge 0$. Suppose that there exists a probabilistic measure $\mu$ on $Z$ satisfying that there exists a constant $C>0$ such that for every cylinder $W$, we have $\mu([W]\cap Z)\le Ce^{-s|W|}$. Then $h^B(Z)\ge s.$
\end{lemma}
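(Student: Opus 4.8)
The plan is to reduce the claim to the single assertion that $\mathcal{H}^s(Z)>0$; once this is in hand, the conclusion $h^B(Z)\ge s$ is immediate from the definition $h^B(Z)=\inf\{t\ge 0:\mathcal{H}^t(Z)=0\}$ together with monotonicity in the exponent. Indeed, since $e^{-t|W|}$ is nonincreasing in $t$ for each fixed cylinder, the quantities $\mathcal{H}^t_N(Z)$ and hence $\mathcal{H}^t(Z)$ are nonincreasing in $t$. Thus if $\mathcal{H}^s(Z)>0$, then $\mathcal{H}^t(Z)\ge\mathcal{H}^s(Z)>0$ for every $t\le s$, so no $t\le s$ belongs to $\{t:\mathcal{H}^t(Z)=0\}$, forcing $\inf\{t:\mathcal{H}^t(Z)=0\}\ge s$, i.e. $h^B(Z)\ge s$.

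To establish $\mathcal{H}^s(Z)>0$ I would run the mass distribution argument directly against the admissible covers. Fix $N\in\N$ and let $\mathcal{W}$ be an arbitrary cover of $Z$ by cylinders of length larger than $N$. Since the alphabet is finite, the family of all cylinders is countable, so $\mathcal{W}$ is countable and the countable subadditivity of $\mu$ is legitimate. As $\mathcal{W}$ covers $Z$ we have $Z\subset\bigcup_{W\in\mathcal{W}}[W]$, whence
$$1=\mu(Z)\le\sum_{W\in\mathcal{W}}\mu([W]\cap Z)\le\sum_{W\in\mathcal{W}}Ce^{-s|W|}=C\sum_{W\in\mathcal{W}}e^{-s|W|},$$
where the first equality uses that $\mu$ is a probabilistic measure carried by $Z$, and the second inequality is the hypothesis applied cylinder by cylinder. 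Rearranging gives $\sum_{W\in\mathcal{W}}e^{-s|W|}\ge C^{-1}$.

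Since $\mathcal{W}$ was an arbitrary cover of the required type, taking the infimum over all such covers yields $\mathcal{H}^s_N(Z)\ge C^{-1}$, and this bound is uniform in $N$. Letting $N\To\infty$ gives $\mathcal{H}^s(Z)=\lim_{N\To\infty}\mathcal{H}^s_N(Z)\ge C^{-1}>0$, which by the first paragraph completes the proof.

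I do not expect a genuine obstacle here: this lemma is the symbolic incarnation of the classical mass distribution (Frostman) principle, and its entire content is the displayed chain of inequalities. The only points requiring care are bookkeeping ones, namely confirming that the cover is countable so that subadditivity applies, and reading ``probabilistic measure on $Z$'' as a Borel probability measure with $\mu(Z)=1$, since it is precisely the normalization $\mu(Z)=1$ that drives the lower bound. No compactness, no diagonal limiting argument, and no structure of $\Sigma_A$ beyond finiteness of the alphabet is needed.
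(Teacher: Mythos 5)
Your proof is correct and complete. The paper does not prove this lemma itself---it imports it as \cite[Lemma 3.1]{BRS20}---but your argument is exactly the standard mass-distribution (Frostman) argument that the cited proof consists of: use countable subadditivity and $\mu(Z)=1$ to bound every admissible cover's sum $\sum_{W\in\mathcal{W}}e^{-s|W|}$ below by $C^{-1}$, conclude $\mathcal{H}^s(Z)\ge C^{-1}>0$ uniformly in $N$, and finish by the monotonicity of $t\mapsto\mathcal{H}^t(Z)$ and the definition of $h^B$.
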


Recall that the packing entropy of a subset $Z\subset \Sigma$ is defined as follows.
For $s\ge0$ and $N\in\N$,
$$\mathcal{P}_N^s(Z)=\sup_{\mathcal{W}}\sum_{W\in\mathcal{W}}e^{-s|W|},$$
where the supremum is taken over all families $\mathcal{W}$,  consisting of pairwise disjoint cylinders satisfying that $[W]\cap Z\neq\emptyset$ and $|W|\ge N$ for any $W\in\mathcal{W}$.
Define $\mathcal{P}^s_*(Z)=\lim_{N\To\infty}\mathcal{P}^s_N(Z)\in[0,\infty]$, and
$$\mathcal{P}^s(Z)=\inf\left\{\sum_{i\in\N}\mathcal{P}^s_*(Z_i):\bigcup_{i\in\N}Z_i\supset Z\right\}.$$
The \emph{packing entropy of $Z$} is defined as
$$h^P(Z)=\inf\{s\ge 0:\mathcal{P}^s(Z)=0\}.$$
The lower bound of $h^P(Z)$ can get by a version of distribution principle as follows.
\begin{lemma}\label{l:DP}\cite[Proposition 2.3]{ZCC12}
	Let $Z\subset\Sigma$ and $s\ge 0$. Suppose that there exists a probabilistic measure $\mu$ on $Z$ satisfying that there exists a constant $C>0$ and a sequence $\{n_i\}\nearrow\infty$ such that for any $x\in Z$, we have $\mu([x|_{[0,n_i)}]\cap Z)\le Ce^{-sn_i}$. Then $h^P(Z)\ge s.$
\end{lemma}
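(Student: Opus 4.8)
The plan is to argue by a mass distribution (Frostman-type) principle for the packing set function, paralleling the role of Lemma~\ref{l:FL} for Bowen entropy. Since $s\mapsto\mathcal{P}^s(Z)$ is non-increasing (each summand $e^{-s|W|}$ decreases in $s$, so this is inherited by $\mathcal{P}^s_N$, $\mathcal{P}^s_*$ and $\mathcal{P}^s$) and $h^P(Z)=\inf\{s\ge 0:\mathcal{P}^s(Z)=0\}$, it suffices to prove the single inequality $\mathcal{P}^s(Z)>0$: if we had $h^P(Z)<s$, we could pick $s'<s$ with $\mathcal{P}^{s'}(Z)=0$, and monotonicity would force $\mathcal{P}^s(Z)=0$, a contradiction. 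Throughout I use that $\mu$ is a probability measure carried by $Z$, so $\mu(Z)=1$.

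The heart of the argument is a local estimate: for every set $E\subset\Sigma$ and every $N\in\N$,
$$\mathcal{P}^s_N(E)\ge \frac1C\,\mu(E\cap Z).$$
To prove it, fix $N$ and choose an index $i$ with $n_i\ge N$ (possible since $\{n_i\}\nearrow\infty$). Consider the family
$$\mathcal{W}=\{[x|_{[0,n_i)}]:x\in E\cap Z\}.$$
All of these cylinders have the same length $n_i\ge N$, so any two of them are either identical or disjoint; hence $\mathcal{W}$ is a finite family of pairwise disjoint cylinders of length $\ge N$, each meeting $E$ (it contains the point $x$ that produced it), whose union covers $E\cap Z$. By the hypothesis, $\mu([x|_{[0,n_i)}]\cap Z)\le Ce^{-sn_i}$ for every such $x$, i.e. $e^{-s|W|}\ge C^{-1}\mu([W]\cap Z)$ for each $W\in\mathcal{W}$. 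Using the definition of $\mathcal{P}^s_N$, the disjointness of $\mathcal{W}$, and the countable subadditivity of $\mu$,
$$\mathcal{P}^s_N(E)\ge\sum_{W\in\mathcal{W}}e^{-s|W|}\ge\frac1C\sum_{W\in\mathcal{W}}\mu([W]\cap Z)\ge\frac1C\,\mu(E\cap Z).$$
Letting $N\to\infty$ gives $\mathcal{P}^s_*(E)\ge C^{-1}\mu(E\cap Z)$.

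Finally I would globalize. Let $\{Z_i\}_{i\in\N}$ be any countable cover of $Z$. Applying the local estimate to each $E=Z_i$ and summing,
$$\sum_{i\in\N}\mathcal{P}^s_*(Z_i)\ge\frac1C\sum_{i\in\N}\mu(Z_i\cap Z)\ge\frac1C\,\mu(Z)=\frac1C.$$
Taking the infimum over all such covers yields $\mathcal{P}^s(Z)\ge C^{-1}>0$, and hence $h^P(Z)\ge s$, as claimed.

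I do not expect a genuine obstacle here; the argument is the packing analogue of the Frostman lemma. The only points requiring care are the reduction from $h^P(Z)$ to positivity of $\mathcal{P}^s(Z)$ via monotonicity in $s$, and the observation that working with cylinders of a common length $n_i$ makes the required disjointness automatic, so no prefix-free extraction from a nested family is needed. One should also note that the local estimate must be established for an \emph{arbitrary} $E\subset\Sigma$ (not merely $E\subset Z$), since the cover $\{Z_i\}$ in the definition of $\mathcal{P}^s(Z)$ consists of arbitrary subsets of $\Sigma$; this is precisely why $\mu(E\cap Z)$ rather than $\mu(E)$ appears in the local bound.
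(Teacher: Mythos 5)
Your proof is correct. Note that the paper itself gives no argument for this lemma: it is quoted from \cite[Proposition 2.3]{ZCC12}, so there is no in-paper proof to compare against. Your reduction of $h^P(Z)\ge s$ to $\mathcal{P}^s(Z)>0$ via monotonicity of $s\mapsto\mathcal{P}^s(Z)$ is sound, and the local estimate $\mathcal{P}^s_N(E)\ge C^{-1}\mu(E\cap Z)$ for arbitrary $E\subset\Sigma$ is exactly the right intermediate statement, since the covers in the definition of $\mathcal{P}^s(Z)$ range over arbitrary subsets. The one structural remark worth making is that your argument leans on the hypothesis as stated here being \emph{uniform}: the same sequence $\{n_i\}$ works for every $x\in Z$, so for each $N$ you may pick a single $n_i\ge N$ and the resulting cylinders, all of the common length $n_i$, are automatically pairwise disjoint or equal. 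The source result in \cite{ZCC12} is proved in the more general setting where the good scales are allowed to depend on the point (a $\limsup$ condition along Bowen balls), and there one must, for each $N$, partition $Z$ according to the first good scale exceeding $N$ and extract a disjoint subfamily carrying a definite fraction of the measure; your observation that none of this is needed under the uniform hypothesis is accurate, and the resulting proof is both complete and simpler than the cited one. Minor quibble: the inequality $\sum_{W\in\mathcal{W}}\mu([W]\cap Z)\ge\mu(E\cap Z)$ uses subadditivity together with the fact that $\bigcup_{W\in\mathcal{W}}[W]\supseteq E\cap Z$ (disjointness is not even needed for this direction); and, as usual with such distribution principles, measurability of $E\cap Z$ is glossed over and should be read in terms of the outer measure, which is at the same level of rigor as the statement itself.
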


It is well known that $h^B(Z)\le h^P(Z)\le h^{UC}(Z)$.

Recall that $\Pi:\Gamma\To\Omega$ is the projection with $\Pi(\omega,x)=\omega$.
Let $\mu$ be an ergodic $\sigma$-invariant measure on $\Gamma$.
Also, $\Pi_*\mu$ is an ergodic $\sigma$-invariant measure on $\Omega$.
By Shannon-McMillan-Breiman's Theorem,
$$h_\mu=\lim_{n\To\infty}-\frac1n\log\mu([(\omega,x)|_{[0,n)}])\text{ for $\mu$-a.e. }(\omega, x)\in\Gamma,$$
$$h_{\Pi_*\mu}=\lim_{n\To\infty}-\frac1n\log\Pi_*\mu([\omega|_{[0,n)}])\text{ for $\Pi_*\mu$-a.e. }\omega\in\Omega.$$

Denote by $\xi:=\{\{\omega\}\times\Sigma:\omega\in\Omega\}$ the partition of $\Gamma$ generated by $\xi(\omega):=\Pi^{-1}\{\omega\}$.
By Rohlin's Disintegration Theorem, denote the disintegration of $\mu$ by $\mu=\int\mu_\omega^\xi d\Pi_*\mu$. Since $\mu_\omega^\xi$ is supported on $\xi(\omega)$, it can be seen as a measure on $\Sigma$.
Define the conditional entropy of $\mu_\omega^\xi$ by
$$h_\mu^\xi:=\int-\log\mu_\omega^\xi([x_0])d \mu(\omega,x).$$
The following result is the corresponding version of Pinsker's formula \cite{Roh67}.
\begin{proposition}\cite[Theorem 3.2]{BRS20}
	If $\mu$ is an ergodic $\sigma$-invariant measure, then for $\Pi_*\mu$-a.e. $\omega\in\Omega$,
	$$h_\mu^\xi=\lim_{n\To\infty}-\frac1n\log\mu_\omega^\xi([x|_{[0,n)}])\text{ for $\mu_\omega^\xi$-a.e. } x\in\Sigma.$$
	Moreover,
	$$h_\mu=h_{\Pi_*\mu}+h_\mu^\xi.$$
\end{proposition}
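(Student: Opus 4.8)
The statement is the relative (fibred) version of the Shannon--McMillan--Breiman theorem together with the Abramov--Rokhlin addition formula, so the plan is to prove the two displayed assertions separately and then combine them. Throughout write $\nu=\Pi_*\mu$ and let $\mathcal{F}=\Pi^{-1}(\mathcal{B}_\Omega)$ be the $\sigma$-invariant sub-$\sigma$-algebra generated by the partition $\xi$; conditional expectation with respect to $\mathcal{F}$ is then exactly integration against the fibre measures $\mu_\omega^\xi$. The first step is to record the equivariance of the disintegration: since $\Pi\circ\sigma=\sigma\circ\Pi$ and $\xi$ is $\sigma$-invariant, uniqueness of the Rohlin disintegration gives, for $\nu$-a.e.\ $\omega$, the cocycle relation that pushing $\mu_\omega^\xi$ forward along the fibre map produces $\mu_{\sigma\omega}^\xi$. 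This is what allows the dynamics to act on the family $\{\mu_\omega^\xi\}$.

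For the first displayed identity I would run the relativized Shannon--McMillan--Breiman argument. Writing $I_n(\omega,x)=-\log\mu_\omega^\xi([x|_{[0,n)}])$ and expanding by the chain rule along the coordinates, I would decompose $I_n$ as a sum $\sum_{k=0}^{n-1} g_k$ of conditional information functions. Because the shift is one-sided, each $g_k$ is the value at $\sigma^k(\omega,x)$ of a conditional information $-\log\mu_\omega^\xi([x_0]\mid(\text{future}))$ with finite future conditioning, and these increase (martingale convergence) to a limit $g=-\log\mu_\omega^\xi([x_0]\mid x_1,x_2,\dots)$ with $\int g\,d\mu=h_\mu^\xi$. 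Birkhoff's ergodic theorem (using ergodicity of $\mu$) handles the ``diagonal'' term $\tfrac1n\sum_k g\circ\sigma^k$, while the discrepancy between $g_k$ and $g\circ\sigma^k$ is dominated by a single integrable maximal function; Breiman's dominated-ergodic-theorem device then upgrades this to $\mu$-almost everywhere convergence of $\tfrac1n I_n$ to the constant $h_\mu^\xi$. Disintegrating yields the stated pointwise limit for $\mu_\omega^\xi$-a.e.\ $x$ and $\nu$-a.e.\ $\omega$.

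For the addition formula I would take the generating partition $\mathcal{P}$ of $\Gamma$ by the zeroth coordinate $(\omega_0,x_0)$ and split it as $\mathcal{P}=\mathcal{Q}\vee\alpha$, where $\mathcal{Q}$ reads off $\omega_0$ and $\alpha$ reads off $x_0$. The conditional-entropy chain rule gives $H_\mu\big(\bigvee_{i=0}^{n-1}\sigma^{-i}\mathcal{P}\big)=H_\nu\big(\bigvee_{i=0}^{n-1}\sigma^{-i}\mathcal{Q}\big)+H_\mu\big(\bigvee_{i=0}^{n-1}\sigma^{-i}\alpha \mid \bigvee_{i=0}^{n-1}\sigma^{-i}\mathcal{Q}\big)$. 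Dividing by $n$ and letting $n\to\infty$, the first term tends to $h_\nu=h_{\Pi_*\mu}$ by Kolmogorov--Sinai. For the second term I would use that $\bigvee_{i\ge 0}\sigma^{-i}\mathcal{Q}$ generates $\mathcal{F}$, together with the standard estimate that replacing the finite conditioning $\bigvee_{i=0}^{n-1}\sigma^{-i}\mathcal{Q}$ by the full algebra $\mathcal{F}$ alters the normalized conditional entropy only by $o(1)$; the limit is then precisely the relative entropy $h_\mu^\xi$. Combining the two limits gives $h_\mu=h_{\Pi_*\mu}+h_\mu^\xi$.

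I expect the main obstacle to be the almost-everywhere (rather than merely $L^1$ or in-probability) convergence in the first identity: controlling the conditional information functions uniformly in $n$ requires a Breiman-type maximal inequality, and some care is needed because the shift is non-invertible, so the conditioning $\sigma$-algebras must be treated as a one-sided filtration and the equivariance of $\{\mu_\omega^\xi\}$ invoked precisely. The approximation $H_\mu\big(\bigvee_{i=0}^{n-1}\sigma^{-i}\alpha\mid\bigvee_{i=0}^{n-1}\sigma^{-i}\mathcal{Q}\big)-H_\mu\big(\bigvee_{i=0}^{n-1}\sigma^{-i}\alpha\mid\mathcal{F}\big)=o(n)$ in the addition formula is the other delicate point, though it becomes routine once the generating property of $\mathcal{Q}$ under $\sigma$ is in hand.
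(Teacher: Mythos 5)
The paper itself gives no proof of this proposition: it is imported verbatim from \cite[Theorem 3.2]{BRS20} and used as a black box, so there is no in-paper argument to measure yours against. Your overall strategy --- a relativized Shannon--McMillan--Breiman theorem for the pointwise limit and an Abramov--Rokhlin chain-rule argument for $h_\mu=h_{\Pi_*\mu}+h_\mu^\xi$ --- is the standard route and is, in outline, the right one.

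Two steps in your sketch are genuine gaps rather than routine details. First, the cocycle relation $\sigma_*\mu_\omega^\xi=\mu_{\sigma\omega}^\xi$ that you invoke at the outset is false in general when the base shift is non-invertible: uniqueness of the Rohlin disintegration only gives $\mu_{\omega'}^\xi=\int \sigma_*\mu_\omega^\xi\, d\nu_{\omega'}(\omega)$, an average over the $\sigma$-preimages of $\omega'$, where $\nu_{\omega'}$ is the conditional measure of $\nu=\Pi_*\mu$ on $\sigma^{-1}\{\omega'\}$. Your telescoping of $-\log\mu_\omega^\xi([x|_{[0,n)}])$ into conditional information functions evaluated along the orbit of $(\omega,x)$ rests on exactly the exact identity, so the argument as written does not close; the standard repair is to pass to the natural extension, where the disintegration is genuinely equivariant, and project back, and that step has to be carried out rather than only flagged. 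Second, your identification $\int g\,d\mu=h_\mu^\xi$ with $g=-\log\mu_\omega^\xi([x_0]\mid x_1,x_2,\dots)$ does not match the definition displayed just above the proposition, $h_\mu^\xi=\int-\log\mu_\omega^\xi([x_0])\,d\mu$, which carries no conditioning on the future of $x$; the two quantities differ in general (already when $\Omega$ is a single point the latter equals $H_\mu$ of the zero-coordinate partition while the SMB limit is $h_\mu$). The quantity your proof produces is the correct one --- and the one needed for the Ledrappier--Walters formula quoted later --- but you must say explicitly that $h_\mu^\xi$ is to be read as the conditional entropy of the zero-coordinate partition given the fibre $\sigma$-algebra joined with the future coordinates, and reconcile this with the stated definition. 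A minor point: the conditional information functions $g_k$ converge by martingale convergence but need not increase; what you actually use is a.e.\ and $L^1$ convergence via Doob's theorem together with Chung's integrability lemma for $\sup_k g_k$.
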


Fix a continuous function $f:\Gamma_A\To\R$ and a quasi-Bernoulli $\sigma$-invariant ergodic measure $\nu$ on $\Omega$.
For $\omega\in\Omega$, let $$P(f,\omega):=\limsup_{n\To\infty}\frac1n\log Z_n(f,\omega).$$
where $Z_n(f,\omega):=\sum_{W\in\Sigma_{A,n}}\sup_{x\in [W]} \exp(S_0^nf(\omega,x))$.
Define the \emph{conditional pressure} $P_\nu(f)$ for $\nu$ by
$$P_\nu(f)=\int_{\Omega}P(f,\omega) d\nu(\omega),$$
It is proved in \cite{BRS20} that $P_\nu(f)=P(f,\omega)$ for $\nu$-a.e. $\omega\in\Omega$.
The following theorem is shown by Ledrappier and Walters \cite{LW77}.
\begin{theorem}\label{t:variation principle}
	Let $\nu$ be a $\sigma$-invariant measure on $\Omega$ and let $f:\Gamma_A\To\R$ be a continuous potential. Then
	$$P_\nu(f)=\sup\{h_\mu^\xi+\int f d\mu:\mu\in\mathcal{M}_\nu(\Gamma_A)\}.$$
\end{theorem}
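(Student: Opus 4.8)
The plan is to prove the two inequalities $h_\mu^\xi+\int f\,d\mu\le P_\nu(f)$ (for every $\mu$) and $P_\nu(f)\le\sup\{h_\mu^\xi+\int f\,d\mu\}$ separately, following the scheme of the variational principle adapted to the fibred setting over $(\Omega,\nu)$. Throughout write $\hat{\mathcal B}:=\Pi^{-1}(\mathcal B_\Omega)$ for the $\sigma$-algebra generated by $\xi$, let $\alpha_0$ be the partition of a fibre into time-$0$ cylinders, and set $\beta_n:=\bigvee_{i=0}^{n-1}\sigma^{-i}\alpha_0$, so that $\beta_n$ is the partition into admissible $n$-cylinders in the $\Sigma$-coordinate. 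For the upper bound, fix $\mu\in\mathcal M_\nu(\Gamma_A)$. The starting point is the elementary Gibbs inequality $\sum_i p_i(a_i-\log p_i)\le\log\sum_i e^{a_i}$. Applying it on each fibre with $p_W=\mu_\omega^\xi([W])$ and $a_W=\sup_{x\in[W]}S_0^nf(\omega,x)$ over $W\in\Sigma_{A,n}$, and using that a supremum dominates a conditional average, I obtain for $\nu$-a.e. $\omega$
$$H_{\mu_\omega^\xi}(\beta_n)+\int S_0^nf\,d\mu_\omega^\xi\le\log Z_n(f,\omega).$$
Integrating in $\omega$ against $\nu$ turns the left side into $H_\mu(\beta_n\mid\hat{\mathcal B})+n\int f\,d\mu$ (using $\sigma$-invariance of $\mu$ for the potential term), while the right side becomes $\int_\Omega\log Z_n(f,\omega)\,d\nu$. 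The key observation is that $Z_{n+m}(f,\omega)\le Z_n(f,\omega)\,Z_m(f,\sigma^n\omega)$, which follows from $S_0^{n+m}f(\omega,x)=S_0^nf(\omega,x)+S_0^mf(\sigma^n\omega,\sigma^nx)$ and splitting an admissible $(n+m)$-word; hence $\log Z_n(f,\cdot)$ is a subadditive cocycle over the ergodic system $(\Omega,\nu,\sigma)$, and since $\log Z_1$ is bounded, Kingman's subadditive ergodic theorem gives $\tfrac1n\int\log Z_n\,d\nu\to P_\nu(f)$. Dividing by $n$, letting $n\to\infty$, and identifying $\tfrac1nH_\mu(\beta_n\mid\hat{\mathcal B})\to h_\mu^\xi$ (the Ces\`aro form of the Proposition above; for non-ergodic $\mu$ one reduces to the ergodic case by noting that the ergodic components of $\mu$ still project to $\nu$ since $\nu$ is ergodic) yields $h_\mu^\xi+\int f\,d\mu\le P_\nu(f)$.

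For the reverse inequality I would construct nearly optimal measures. Fix $n$; for $\nu$-a.e. $\omega$ and each $W\in\Sigma_{A,n}$ choose a representative $x_W\in[W]$ with $S_0^nf(\omega,x_W)\ge\sup_{x\in[W]}S_0^nf(\omega,x)-V$, where $V:=\sum_{i\ge0}\mathrm{var}_if<\infty$; bounded variation guarantees this uniform bound because two points in a common $n$-cylinder sharing the $\omega$-coordinate have Birkhoff sums differing by at most $V$. Put $p_W=\exp(S_0^nf(\omega,x_W))/\sum_{W'}\exp(S_0^nf(\omega,x_{W'}))$ and let $\sigma_\omega^{(n)}=\sum_W p_W\delta_{x_W}$, the equality case of the Gibbs inequality, so that $H_{\sigma_\omega^{(n)}}(\beta_n)+\int S_0^nf\,d\sigma_\omega^{(n)}\ge\log Z_n(f,\omega)-V$. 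Set $\tilde\mu_n=\int_\Omega\delta_\omega\times\sigma_\omega^{(n)}\,d\nu$, which has marginal $\nu$, and average: $\mu_n=\tfrac1n\sum_{k=0}^{n-1}\sigma^k_*\tilde\mu_n$, still of marginal $\nu$ since $\nu$ is $\sigma$-invariant. Pass to a weak-$*$ limit $\mu$ along a subsequence $n_j$; the averaging forces $\sigma$-invariance, so $\mu\in\mathcal M_\nu(\Gamma_A)$.

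To complete the lower bound, integrate the fibrewise identity and divide by $n$ to get $\tfrac1nH_{\tilde\mu_n}(\beta_n\mid\hat{\mathcal B})+\int f\,d\mu_n\ge\tfrac1n\int\log Z_n\,d\nu-\tfrac Vn$, where I used $\tfrac1n\int S_0^nf\,d\tilde\mu_n=\int f\,d\mu_n$. For any fixed block length $q$, the standard conditional block estimate gives $\tfrac1nH_{\tilde\mu_n}(\beta_n\mid\hat{\mathcal B})\le\tfrac1qH_{\mu_n}(\beta_q\mid\hat{\mathcal B})+\tfrac{2q\log K}{n}$. Letting $n=n_j\to\infty$ and using upper semicontinuity of $\eta\mapsto H_\eta(\beta_q\mid\hat{\mathcal B})$ together with continuity of $\eta\mapsto\int f\,d\eta$, then letting $q\to\infty$ so that $\tfrac1qH_\mu(\beta_q\mid\hat{\mathcal B})\to h_\mu^\xi$, I arrive at $h_\mu^\xi+\int f\,d\mu\ge P_\nu(f)$, completing the proof.

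The hard part will be this transfer to the limit in the relative setting. Two points require care: the upper semicontinuity of $\eta\mapsto H_\eta(\beta_q\mid\hat{\mathcal B})$ on the fixed finite fibre partition $\beta_q$, and the conditional block-subadditivity estimate, both conditioned on $\hat{\mathcal B}=\Pi^{-1}(\mathcal B_\Omega)$, which is not generated by a finite partition; these rest on the base marginal being held fixed at $\nu$ and on fibre cylinders being clopen with $\mu$-null boundaries. Bounded variation of $f$ is precisely what keeps the discrepancy between $\sup_{[W]}S_0^nf$ and its value at the representative $x_W$ summable and uniform in $\omega$ (bounded by $V$), so that the constructed measures realise the pressure in the limit rather than leaking it into the error terms.
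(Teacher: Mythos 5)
The paper does not prove this statement at all: it is quoted as the relativized variational principle of Ledrappier and Walters \cite{LW77}, specialized to the factor map $\Pi:\Gamma_A\To\Omega$, so what you have written is a genuine reproof rather than an alternative to an argument in the text. Your scheme is the standard Misiurewicz-style two-inequality proof correctly transported to the conditional setting, and it is sound: the fibrewise Gibbs inequality together with the subadditive cocycle $\log Z_{n+m}(f,\omega)\le\log Z_n(f,\omega)+\log Z_m(f,\sigma^n\omega)$ and Kingman's theorem gives $\tfrac1n\int\log Z_n\,d\nu\to P_\nu(f)$ and hence the upper bound, while the fibred Gibbs measures $\sigma_\omega^{(n)}$, time-averaged and passed to a weak-$*$ limit in $\mathcal{M}_\nu(\Gamma_A)$, give the lower bound. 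The two points you flag as delicate do go through: writing $\gamma_m$ for the base $m$-cylinder partition, $H_\eta(\beta_q\mid\hat{\mathcal B})=\inf_m\bigl(H_\eta(\beta_q\vee\gamma_m)-H_\nu(\gamma_m)\bigr)$ is an infimum of functions continuous on $\{\eta:\Pi_*\eta=\nu\}$ (clopen cells, fixed marginal), hence upper semicontinuous there; and the conditional block estimate uses $\sigma^{-k}\hat{\mathcal B}\subset\hat{\mathcal B}$ in the correct direction, since conditioning on the larger algebra decreases entropy. Two small corrections are needed. First, the theorem assumes only that $f$ is continuous, while your lower bound invokes $V=\sum_i\mathrm{var}_if<\infty$; for merely continuous $f$ replace $V$ by $\sum_{j=1}^n\mathrm{var}_jf$, which is $o(n)$ by uniform continuity of $f$ and Ces\`aro averaging, and nothing else changes because the error is divided by $n$. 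Second, $\nu$ is only assumed invariant, not ergodic: Kingman and the subadditivity of $n\mapsto H_\mu(\beta_n\mid\hat{\mathcal B})$ need no ergodicity, but your reduction via ``the ergodic components of $\mu$ still project to $\nu$'' does; for non-ergodic $\nu$ one should instead identify $\lim_n\tfrac1nH_\mu(\beta_n\mid\hat{\mathcal B})$ with $h_\mu-h_{\Pi_*\mu}$ directly by the general Abramov--Rokhlin formula (the Proposition the paper quotes from \cite{BRS20}).
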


In \cite[Theorem 2.3]{BRS20}, they prove the Bowen entropy of $\Delta^\omega_{\mathrm{equ}}(\{\alpha\})$ for $\alpha\in\mathcal{P}_A^o$.
\begin{theorem}[{\cite[Theorem 2.3]{BRS20}}]\label{t:fo}
	Let $\Sigma_A\subset \Sigma$ be an aperiodic and irreducible shift of finite type, and let $f:\Gamma_A\To \R^d$ be a continuous map with bounded variation. If $\nu$ is a quasi-Bernoulli $\sigma$-invariant ergodic measure on $\Omega$, then for any $\alpha\in\mathcal{P}_A^o$ and $\nu$-almost everywhere $\omega\in\Omega$,
	$$
	\begin{aligned}
	h^B(\Delta^\omega_{\mathrm{equ}}(\{\alpha\}))
	=&\sup\{h_\mu:\mu\in\mathcal{E}_\nu(\Gamma_A)\text{ and }\int f d\mu=\alpha\}-h_\nu\\
	=&\sup\{h_\mu:\mu\in\mathcal{M}_\nu(\Gamma_A)\text{ and }\int f d\mu=\alpha\}-h_\nu\\
	=&\inf_{p\in\R^d}P_{\nu}(\langle p,f-\alpha\rangle).
	\end{aligned}
	$$
\end{theorem}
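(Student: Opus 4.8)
The plan is to separate the statement into a purely convex-analytic part---the equalities among the measure-theoretic suprema and the dual pressure formula---and an entropy part that sandwiches $h^B(\Delta^\omega_{\mathrm{equ}}(\{\alpha\}))$ between these extremal values. First I would use the relative Pinsker formula in the Proposition above to replace every $h_\mu$ by $h_\nu+h_\mu^\xi$ whenever $\Pi_*\mu=\nu$, so that each ``$\sup\{h_\mu:\dots\}-h_\nu$'' becomes a supremum of the fibered entropy $h_\mu^\xi$. The equality of the supremum over $\mathcal{E}_\nu(\Gamma_A)$ and over $\mathcal{M}_\nu(\Gamma_A)$ then follows from the ergodic decomposition together with the affinity of $\mu\mapsto h_\mu^\xi$ and $\mu\mapsto\int f\,d\mu$: an invariant $\mu$ with $\int f\,d\mu=\alpha$ splits into ergodic components whose barycentre is $\alpha$ and whose fibered entropies average to $h_\mu^\xi$, so suitable ergodic measures recover the supremum.

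For the identity $\sup\{h_\mu^\xi:\int f\,d\mu=\alpha\}=\inf_{p}P_\nu(\langle p,f-\alpha\rangle)$ I would invoke Lagrangian duality. By Theorem \ref{t:variation principle}, $P_\nu(\langle p,f-\alpha\rangle)=\sup_{\mu\in\mathcal{M}_\nu(\Gamma_A)}\{h_\mu^\xi+\langle p,\int f\,d\mu-\alpha\rangle\}$. Since $\mu\mapsto h_\mu^\xi$ is affine and upper semicontinuous on the weak$^*$ compact convex set $\mathcal{M}_\nu(\Gamma_A)$, while $p\mapsto\langle p,\int f\,d\mu-\alpha\rangle$ is affine, Sion's minimax theorem permits exchanging $\inf_p$ and $\sup_\mu$; the inner infimum over $p$ then equals $h_\mu^\xi$ when $\int f\,d\mu=\alpha$ and $-\infty$ otherwise, which yields the claimed identity. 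Equivalently, one views $p\mapsto P_\nu(\langle p,f\rangle)$ as a finite convex function on $\R^d$ and reads the entropy as its Legendre conjugate; the hypothesis $\alpha\in\mathcal{P}_A^o$ makes $\alpha$ an interior point of the domain, which guarantees both that the constraint set is nonempty and that there is no duality gap.

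For the lower bound $h^B(\Delta^\omega_{\mathrm{equ}}(\{\alpha\}))\ge h_\mu^\xi$ for each admissible $\mu$, I would work with the fibre measures $\mu_\omega^\xi$ from Rohlin disintegration. By Birkhoff's theorem $\frac1n S_0^n f(\omega,x)\to\alpha$ for $\mu$-a.e.\ $(\omega,x)$, so by Fubini for $\nu$-a.e.\ $\omega$ the measure $\mu_\omega^\xi$ is carried by $\Delta^\omega_{\mathrm{equ}}(\{\alpha\})$; by the relative Shannon--McMillan--Breiman statement, $-\frac1n\log\mu_\omega^\xi([x|_{[0,n)}])\to h_\mu^\xi$ for $\mu_\omega^\xi$-a.e.\ $x$. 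Fixing $s<h_\mu^\xi$ and using Egorov to pass to a positive-measure subset $Z$ on which this convergence is uniform, the normalised restriction of $\mu_\omega^\xi$ satisfies the Frostman hypothesis $\mu([W]\cap Z)\le Ce^{-s|W|}$, so Lemma \ref{l:FL} gives $h^B\ge s$. Letting $s\uparrow h_\mu^\xi$ and taking a countable sequence of $\mu$ approaching the supremum (intersecting the corresponding full-measure sets of $\omega$) yields the lower bound for $\nu$-a.e.\ $\omega$.

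For the matching upper bound, fix $p\in\R^d$ and set $g=\langle p,f-\alpha\rangle$, so that $\frac1n S_0^n g(\omega,x)\to0$ on the level set and, for $\nu$-a.e.\ $\omega$, $\frac1n\log Z_n(g,\omega)\to P_\nu(g)$. Covering $\Delta^\omega_{\mathrm{equ}}(\{\alpha\})$ by the length-$n$ cylinders it meets and weighting each by $e^{-s|W|}$, I would insert the factor $\exp(S_0^n g)\approx1$ to compare the Hausdorff sum with $Z_n(g,\omega)$, the bounded variation of $f$ controlling the gap between $\sup_{[W]}\exp(S_0^n g)$ and a representative value; this forces $\mathcal{H}^s_N\to0$ once $s>P_\nu(g)$, hence $h^B\le P_\nu(g)$, and the infimum over $p$ closes the chain. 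The main obstacle I anticipate is this upper bound in tandem with the no-gap duality: one must control $Z_n(g,\omega)$ uniformly in $n$ at a single $\nu$-typical $\omega$---this is exactly where the quasi-Bernoulli property of $\nu$ and the aperiodicity and irreducibility of $\Sigma_A$ enter, the former to make $P(g,\omega)$ almost surely constant and the latter to concatenate admissible words with bounded connecting gaps---and confirm that $\alpha\in\mathcal{P}_A^o$ genuinely excludes a gap in the minimax exchange. The Frostman lower bound, once the Egorov uniformisation is in place, is comparatively routine.
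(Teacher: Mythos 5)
First, a point of bookkeeping: the paper does not prove this statement at all --- it is imported verbatim as \cite[Theorem 2.3]{BRS20}, so there is no in-paper proof to compare against. What can be compared is your sketch against the techniques the paper deploys for its own generalizations of this result (Lemmas \ref{l:1upperbound}, \ref{l:2upperbound}, \ref{l:Moren}, \ref{l:Moren2}), and there your outline follows essentially the same strategy: the upper bound by covering the level set with cylinders on which $S_0^n\langle p,f-\alpha\rangle$ is small and comparing the resulting Hausdorff sum with $Z_n(\langle p,f-\alpha\rangle,\omega)$, and the lower bound by disintegrating an ergodic $\mu\in\mathcal{E}_\nu(\Gamma_A)$ with $\int f\,d\mu=\alpha$, applying Birkhoff and the relative Shannon--McMillan--Breiman theorem to the fibre measures $\mu_\omega^\xi$, and invoking the Frostman-type Lemma \ref{l:FL} after an Egorov uniformisation. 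Your use of Theorem \ref{t:variation principle} together with a minimax argument for the dual identity is also the standard route and is sound given upper semicontinuity of $\mu\mapsto h_\mu^\xi$ on a subshift.

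The one step that does not work as written is the passage from $\mathcal{M}_\nu(\Gamma_A)$ to $\mathcal{E}_\nu(\Gamma_A)$ via ergodic decomposition. If $\mu$ is invariant with $\int f\,d\mu=\alpha$, its ergodic components satisfy only that the barycentre of their integrals is $\alpha$; an individual component carrying most of the fibered entropy may well have $\int f\,d\mu_e\neq\alpha$, so ``suitable ergodic measures recover the supremum'' does not follow from affinity of $\mu\mapsto h_\mu^\xi$ and $\mu\mapsto\int f\,d\mu$ alone. This is precisely where $\alpha\in\mathcal{P}_A^o$, the quasi-Bernoulli hypothesis and aperiodicity must enter more substantively: either one produces, for $p$ at or near the minimiser of $P_\nu(\langle p,f-\alpha\rangle)$, a relative equilibrium state that is ergodic and satisfies $\int f\,d\mu=\alpha$ (a subdifferential argument for the convex map $p\mapsto P_\nu(\langle p,f\rangle)$ at interior points of $\mathcal{P}_A$), or one perturbs a finite convex combination of ergodic measures back onto the constraint set, using that $\alpha$ has a neighbourhood inside $\mathcal{P}_A$, at a controlled entropy cost. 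Without one of these devices the equality between the first and second lines of the chain is not established. The remainder of your sketch --- the Frostman lower bound, the covering upper bound, and the minimax identity --- is correct in outline.
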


\section{Bowen entropy of $\Delta^\omega_{\mathrm{cap}}(C)$ and $\Delta^\omega_{\mathrm{sub}}(C)$}\label{s:3}
In this section, we fix some $C\subset\mathcal{P}_A^o$, a continuous function $f:\Gamma_A\To\R^d$ with bounded variation and a quasi-Bernoulli $\sigma$-invariant ergodic measure $\nu$ on $\Omega$.
For $\alpha,p\in\R^d$, let
$$f^\alpha_p=\langle p,f-\alpha\rangle.$$

First, we show the upper bound of entropy of $\Delta^\omega_{\mathrm{cap}}(C)$.
For any $a\in\R^d$ and $B\subset\R^d$, let $\mathrm{dist}(a,B)=\inf_{b\in B}|a-b|$.
We need the following lemma, which is contained in \cite[Lemma 5.7]{BRS20}.

\begin{lemma}\label{l:p's_bound}
	For any $\alpha\in\mathcal{P}_A^o$ and $p\in\R^d$, let $\eta<\mathrm{dist}(\alpha,\R^d\setminus\mathcal{P}_A^o)$. Then we have $|p|\le \frac{P_\nu(f_p^\alpha)}{\eta}$.
\end{lemma}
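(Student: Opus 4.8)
The plan is to exploit the variational principle for the conditional pressure (Theorem~\ref{t:variation principle}) together with the fact that, since $\alpha\in\mathcal{P}_A^o$, the open set $\mathcal{P}_A^o$ contains the ball of radius $\mathrm{dist}(\alpha,\R^d\setminus\mathcal{P}_A^o)$ centered at $\alpha$. The case $p=0$ is immediate: then $f_p^\alpha\equiv 0$ and $|p|=0\le P_\nu(0)/\eta$, because $P_\nu(0)=\sup_\mu h_\mu^\xi\ge 0$. So I would assume $p\neq 0$ from now on.

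The key step is to choose a measure whose barycenter $\int f\,d\mu$ points in the direction of $p$, so that the linear functional $\langle p,\cdot-\alpha\rangle$ is maximized. Set
$$\beta:=\alpha+\eta\,\frac{p}{|p|}.$$
Since $|\beta-\alpha|=\eta<\mathrm{dist}(\alpha,\R^d\setminus\mathcal{P}_A^o)$, the point $\beta$ lies in the ball around $\alpha$ that is contained in $\mathcal{P}_A^o$, hence $\beta\in\mathcal{P}_A$. By the definition of $\mathcal{P}_A$ there then exists $\mu\in\mathcal{M}_\nu(\Gamma_A)$ with $\int f\,d\mu=\beta$.

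Then I would apply Theorem~\ref{t:variation principle} to the potential $f_p^\alpha=\langle p,f-\alpha\rangle$ and bound the supremum from below using this particular $\mu$:
\begin{align*}
P_\nu(f_p^\alpha)
&\ge h_\mu^\xi+\int\langle p,f-\alpha\rangle\,d\mu
= h_\mu^\xi+\Big\langle p,\textstyle\int f\,d\mu-\alpha\Big\rangle
= h_\mu^\xi+\langle p,\beta-\alpha\rangle\\
&\ge \langle p,\beta-\alpha\rangle
= \Big\langle p,\eta\,\tfrac{p}{|p|}\Big\rangle
=\eta\,|p|,
\end{align*}
where the second inequality uses that the conditional entropy term $h_\mu^\xi$ is nonnegative. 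Dividing by $\eta>0$ yields $|p|\le P_\nu(f_p^\alpha)/\eta$, as required.

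Since the argument reduces to a single application of the variational principle to a cleverly chosen measure, I do not expect a serious obstacle. The only two points requiring care are that the chosen $\beta$ genuinely lies in $\mathcal{P}_A$ — which is guaranteed precisely by the hypothesis $\eta<\mathrm{dist}(\alpha,\R^d\setminus\mathcal{P}_A^o)$ — and that $h_\mu^\xi\ge 0$, so that the entropy contribution may be discarded to obtain the clean lower bound $\eta\,|p|$.
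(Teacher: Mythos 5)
Your argument is correct and coincides with the paper's own proof: both take $\mu\in\mathcal{M}_\nu(\Gamma_A)$ with barycenter $\alpha+\eta\,p/|p|\in\mathcal{P}_A$ and apply the Ledrappier--Walters variational principle to get $P_\nu(f_p^\alpha)\ge h_\mu^\xi+\eta|p|\ge\eta|p|$. Your explicit treatment of the $p=0$ case is a harmless addition that the paper leaves implicit.
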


\begin{proof}
	Since $\eta<\mathrm{dist}(\alpha,\R^d\setminus\mathcal{P}_A^o)$, $\alpha+\eta\cdot\frac{p}{|p|}\in\mathcal{P}_A$. Then there exists $\mu\in\mathcal{M}_\nu(\Gamma_A)$ such that $\int f d\mu=\alpha+\eta\cdot\frac{p}{|p|}$. Then by Theorem \ref{t:variation principle},
	$$P_\nu(f_p^\alpha)\ge h_\mu^\xi+\int f_p^\alpha d\mu\ge \eta|p|.$$
\end{proof}

By the above lemma, we can get the upper bound of $h^B(\Delta^\omega_{\mathrm{cap}}(C))$.

\begin{lemma}\label{l:1upperbound}
	For $\nu$-almost everywhere $\omega\in\Omega$,
	$$h^B(\Delta^\omega_{\mathrm{cap}}(C))\le \sup_{\alpha\in C}\inf_{p\in\R^d}P_\nu(f^\alpha_p).$$
\end{lemma}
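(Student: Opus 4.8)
The plan is to estimate the Bowen content $\mathcal{H}^s$ directly, by covering $\Delta^\omega_{\mathrm{cap}}(C)$ with cylinders on which the weighted Birkhoff average is pinned near $C$, and to count those cylinders through the conditional pressure. Write $s^\ast:=\sup_{\alpha\in C}\inf_{p\in\R^d}P_\nu(f^\alpha_p)\ge 0$ and fix $\delta>0$; since $\delta$ is arbitrary it suffices to prove $h^B(\Delta^\omega_{\mathrm{cap}}(C))\le s^\ast+3\delta$ for $\nu$-a.e.\ $\omega$. For each $\alpha\in C$ I choose a near-minimizer $p_\alpha\in\R^d$ with $P_\nu(f^\alpha_{p_\alpha})<s^\ast+\delta$; Lemma~\ref{l:p's_bound} guarantees that such a $p_\alpha$ may be taken of finite, controlled norm, since $P_\nu(f^\alpha_p)\ge\eta|p|$ forces the relevant infimum to be attained on a bounded set. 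Because $P_\nu(f^\beta_{p_\alpha})=P_\nu(\langle p_\alpha,f\rangle)-\langle p_\alpha,\beta\rangle$ is affine in $\beta$, I then fix a radius $r_\alpha\in(0,1]$ small enough that $|p_\alpha|\,r_\alpha<\delta$, using the Euclidean ball $B(\alpha,r_\alpha)\subset\R^d$.

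The counting engine is as follows. For $\alpha\in C$ and $n\in\N$ set
$$\mathcal{W}^\alpha_n:=\{W\in\Sigma_{A,n}:\exists\,x\in[W]\text{ with }\tfrac1nS_0^nf(\omega,x)\in B(\alpha,r_\alpha)\}.$$
If $W\in\mathcal{W}^\alpha_n$ with witness $x_W$, then $S_0^nf^\alpha_{p_\alpha}(\omega,x_W)=\langle p_\alpha,\,S_0^nf(\omega,x_W)-n\alpha\rangle\ge-n|p_\alpha|r_\alpha>-n\delta$, so each such cylinder contributes at least $e^{-n\delta}$ to $Z_n(f^\alpha_{p_\alpha},\omega)$; hence $\#\mathcal{W}^\alpha_n\le e^{n\delta}Z_n(f^\alpha_{p_\alpha},\omega)$. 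Since $P(f^\alpha_{p_\alpha},\omega)=P_\nu(f^\alpha_{p_\alpha})<s^\ast+\delta$ for $\nu$-a.e.\ $\omega$, for such $\omega$ there is $N_\alpha(\omega)$ with $\#\mathcal{W}^\alpha_n\le e^{n(s^\ast+3\delta)}$ for all $n\ge N_\alpha(\omega)$.

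To assemble these into a bound on the whole set I exploit the limit-point structure of the cap set and reduce the union over $C$ to a countable one. If $x\in\Delta^\omega_{\mathrm{cap}}(C)$, some $\alpha\in A(\omega,x)\cap C$ is a limit point of the averages, so $\tfrac1nS_0^nf(\omega,x)\in B(\alpha,r_\alpha)$ for infinitely many $n$; thus the set $F_\alpha:=\{x:\tfrac1nS_0^nf(\omega,x)\in B(\alpha,r_\alpha)\text{ i.o.}\}$ contains $x$, and $F_\alpha\subset\bigcup_{n\ge N}\bigcup_{W\in\mathcal{W}^\alpha_n}[W]$ for every $N$. The balls $\{B(\alpha,r_\alpha):\alpha\in C\}$ cover $C$, so by the Lindel\"of property of $\R^d$ there is a countable subcover $\{B(\alpha_k,r_{\alpha_k})\}_{k\in\N}$ with $\alpha_k\in C$, giving $\Delta^\omega_{\mathrm{cap}}(C)\subset\bigcup_k F_{\alpha_k}$. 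For $s>s^\ast+3\delta$, and for $\omega$ in the full-measure intersection of the a.e.\ sets attached to the countably many potentials $f^{\alpha_k}_{p_{\alpha_k}}$, the estimate above gives $\mathcal{H}^s_N(F_{\alpha_k})\le\sum_{n\ge N}e^{n(s^\ast+3\delta-s)}\to0$ as $N\to\infty$, whence $h^B(F_{\alpha_k})\le s^\ast+3\delta$. Countable stability of Bowen entropy then yields $h^B(\Delta^\omega_{\mathrm{cap}}(C))\le\sup_k h^B(F_{\alpha_k})\le s^\ast+3\delta$, and letting $\delta\to0$ along a sequence, while intersecting the corresponding full-measure $\omega$-sets, finishes the proof.

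The main obstacle is organizational rather than computational: the defining condition $A(\omega,x)\cap C\neq\emptyset$ controls the averages only along a subsequence and only near \emph{some} point of a possibly uncountable $C$, so neither the crude "count all cylinders meeting $Z$" bound nor a naive union over $\alpha\in C$ is available. The two devices that resolve this are turning the limit-point condition into an "infinitely often" covering valid for every threshold $N$ — precisely what the $N\to\infty$ limit in $\mathcal{H}^s$ is designed to absorb — and replacing the uncountable union by a countable one via Lindel\"of, so that countable stability of $h^B$ applies and one full-measure set of $\omega$ suffices. Keeping the error term $|p_\alpha|r_\alpha$ under control, for which Lemma~\ref{l:p's_bound} bounds the minimizing parameters, is the only quantitative point needing care.
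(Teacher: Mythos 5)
Your proof is correct, and the counting engine at its core is the same as the paper's: bound the number of $n$-cylinders on which $\tfrac1nS_0^nf(\omega,\cdot)$ is pinned near some $\alpha\in C$ by $e^{n\delta}Z_n(f^{\alpha}_{p_\alpha},\omega)$ and invoke the $\nu$-a.e.\ identity $P(f^\alpha_{p_\alpha},\omega)=P_\nu(f^\alpha_{p_\alpha})$. The organization, however, differs in three ways that make your version a bit leaner. First, you take a Lindel\"of countable subcover with radii $r_\alpha$ adapted to $|p_\alpha|$ and finish with countable stability of $h^B$ applied to the sets $F_{\alpha_k}$, whereas the paper takes a \emph{finite} subcover of uniform radius $1/M$ and runs a single $\mathcal H^s_N$ estimate over all $k\le k_M$ at once; because your radius absorbs $|p_\alpha|$, you never need a uniform bound on the minimizing parameters, so your citation of Lemma~\ref{l:p's_bound} is actually superfluous (a near-minimizer is a finite vector by definition, and the infimum is finite since $P_\nu(f^\alpha_p)\ge 0$ on $\mathcal{P}_A^o$), while the paper genuinely needs that lemma to get $|p_k|\le s_0/\eta$ uniformly in $k$. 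Second, for the same reason you can skip the paper's preliminary reduction to the case $B(C,2\eta)\subset\mathcal{P}_A^o$ followed by the exhaustion $C=\bigcup_n C_n$. Third, your cylinder classes $\mathcal{W}^\alpha_n$ are defined by the existence of a single witness point, and the lower bound $\sup_{x\in[W]}e^{S_0^nf^\alpha_{p_\alpha}(\omega,x)}\ge e^{-n\delta}$ needs only that witness; the paper instead controls $\sup_{y\in[W]}|\tfrac1nS_0^nf(\omega,y)-\alpha_k|$ via uniform continuity of $f(\omega,\cdot)$, an extra step your formulation avoids. The trade-off is essentially aesthetic: the paper's finite-cover argument keeps everything inside one Hausdorff-content computation, while yours leans on countable stability of $h^B$ (which the paper itself also uses, in its final reduction step), so both are sound.
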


\begin{proof}
	First, we assume that there exists $\eta>0$ such that $B(C,2\eta)\subset\mathcal{P}_A^o$. Let $\{\alpha_1,\alpha_2,...\}\subset C$ be a countable dense subset of $C$.
	
	Fix any $s>s_0>\sup_{\alpha\in C}\inf_{p\in\R^d}P_\nu(f^\alpha_p)$.
	Then for each $k$, there exists $p_k\in\R^d$ such that $P_\nu(f^{\alpha_k}_{p_k})<s_0$. By Lemma \ref{l:p's_bound}, we have $|p_k|\le \frac{s_0}\eta$.
	Fix $M\ge 1$ such that $\frac{3s_0}{M\eta}<\frac{s-s_0}2$.
	Since $C\subset\mathcal{P}_A$ and $\mathcal{P}_A$ is bounded, then there exists $k_M$ such that $\bigcup_{k=1}^{k_M}B(\alpha_k,\frac1M)\supset C$.
	For simplicity, we set $f_k:=f^{\alpha_k}_{p_k}$.
	Then there exists $\Omega_0\subset \Omega$ with $\nu(\Omega_0)=1$ such that for any $\omega\in\Omega_0$ and $k>0$, $P(f_k,\omega)=P_\nu(f_k)<s_0$.
	
	Fix $\omega\in\Omega_0$.
	Then there exists $N_1>0$ such that for every $n>N_1$ and $1\le k\le k_M$,
	$$\sum_{W\in\Sigma_{A,n}}\sup_{x\in [W]}e^{\langle p_k, S_0^nf(\omega,x)-n\alpha_k\rangle}<e^{s_0n}.$$
	By uniform continuity of $f(\omega,\cdot)$, there exists $N_2$ such that for every $n>N_2$, $W\in\Sigma_{A,n}$ and $x,y\in[W]$,
	$$\left|S_0^nf(\omega,x)-S_0^nf(\omega,y)\right|<\frac nM.$$
	Let $N>\max\{N_1,N_2\}$. By definition of $\Delta^\omega_{\mathrm{cap}}(C)$,
	$$\Delta^\omega_{\mathrm{cap}}(C)\subset\bigcup_{n\ge N}\bigcup_{k=1}^{k_M}\left\{[W]:W\in \Sigma_{A,n}\text{ and }\sup_{y\in[W]}\left|\frac1nS_0^nf(\omega,y)-\alpha_k\right|<\frac3M\right\}.$$
	For $n\ge N$ and $1\le k\le k_M$, let
	$$L_{n,k}=\left\{W\in \Sigma_{A,n}:\sup_{y\in[W]}\left|\frac1nS_0^nf(\omega,y)-\alpha_k\right|<\frac3M\right\}$$
	Hence,
	$$
	\begin{aligned}
		\mathcal{H}_N^s(\Delta^\omega_{\mathrm{cap}}(C))\le&\sum_{n\ge N}\sum_{k=1}^{k_M}\sum_{W\in L_{n,k}}e^{-ns}\\
		\le&\sum_{n\ge N}e^{-\frac{n(s-s_0)}2}\sum_{k=1}^{k_M}\sum_{W\in L_{n,k}}e^{-ns_0}\cdot e^{-|p_k|\cdot\frac{3n}M}\\
		\le&\sum_{n\ge N}e^{-\frac{n(s-s_0)}2}\sum_{k=1}^{k_M}\sum_{W\in L_{n,k}}e^{-ns_0}\cdot \sup_{y\in[W]}e^{\langle p_k,S_0^nf(\omega,y)-n\alpha_k\rangle}\\
		\le&\sum_{n\ge N}k_Me^{-\frac{n(s-s_0)}2}\To0\text{ as }N\To\infty.
	\end{aligned}
	$$
	Although the set $\Omega_0$ may depend on $s,s_0$, we can choose $\Omega'_0$ does not depend on $s,s_0$ by choosing $s,s_0\in\mathbb{Q}$ and the intersection of the corresponding $\Omega_0$.
	
	For general $C\subset\mathcal{P}_A^o$, let $C_n=\{
	\alpha\in C:\mathrm{dist}(\alpha,\R^d\setminus\mathcal{P}_A^o)>\frac1n\}$. Then we note that $C=\bigcup_{n\ge1}C_n$ and
	$$\Delta^\omega_{\mathrm{cap}}(C)=\bigcup_{n\ge1}\Delta^\omega_{\mathrm{cap}}(C_n).$$
	Thus
	$$h^B(\Delta^\omega_{\mathrm{cap}}(C))\le\sup_{n\ge 1}\sup_{\alpha\in C_n}\inf_{p\in\R^d}P_\nu(f^\alpha_p)=\sup_{\alpha\in C}\inf_{p\in\R^d}P_\nu(f^\alpha_p).$$
\end{proof}

Next, the lower bound of Bowen entropy of $h^B(\Delta^\omega_{\mathrm{sub}}(C))$ can be concluded from Theorem \ref{t:fo}, since
$$h^B(\Delta^\omega_{\mathrm{sub}}(C))\ge \sup_{\alpha\in C}h^B(\Delta^\omega_{\mathrm{equ}}(\{\alpha\})).$$
\section{Bowen entropy of $\Delta^\omega_{\mathrm{sup}}(C)$}\label{s:4}
In this section, we fix some $C\subset\mathcal{P}_A^o$, a continuous function $f:\Gamma_A\To\R^d$ with bounded variation and a quasi-Bernoulli $\sigma$-invariant ergodic measure $\nu$ on $\Omega$.

First, we give the upper bound of $h^B(\Delta^\omega_{\mathrm{sup}}(C))$.

\begin{lemma}\label{l:2upperbound}
	For any $\omega\in\Omega$,
	$$h^B(\Delta^\omega_{\mathrm{sup}}(C))\le \inf_{\alpha\in C,p\in\R^d}P(f^\alpha_p,\omega).$$
\end{lemma}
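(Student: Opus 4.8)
The plan is to reduce the statement about $\Delta^\omega_{\mathrm{sup}}(C)$ to a family of one-sided estimates indexed by the target values $\alpha\in C$, and then to exploit the freedom in the weight vector $p$. The starting observation is that if $x\in\Delta^\omega_{\mathrm{sup}}(C)$, then $A(\omega,x)\supset C$, so every $\alpha\in C$ is a limit point of $\{\frac1n S_0^nf(\omega,x)\}_n$; equivalently, $\Delta^\omega_{\mathrm{sup}}(C)\subset\Delta^\omega_{\mathrm{cap}}(\{\alpha\})$ for each $\alpha\in C$, where membership in $\Delta^\omega_{\mathrm{cap}}(\{\alpha\})$ amounts to $\liminf_{n\To\infty}\left|\frac1n S_0^nf(\omega,x)-\alpha\right|=0$. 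Consequently $h^B(\Delta^\omega_{\mathrm{sup}}(C))\le\inf_{\alpha\in C}h^B(\Delta^\omega_{\mathrm{cap}}(\{\alpha\}))$, and it suffices to prove that $h^B(\Delta^\omega_{\mathrm{cap}}(\{\alpha\}))\le P(f^\alpha_p,\omega)$ for each $\alpha\in C$ and each $p\in\R^d$. Taking the infimum over $p$ and then over $\alpha$ then yields the lemma.

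To bound $h^B(\Delta^\omega_{\mathrm{cap}}(\{\alpha\}))$, I fix $p\in\R^d$ and $\epsilon>0$ and, for each $n$, single out the ``good'' words
$$G_n=\left\{W\in\Sigma_{A,n}:\text{there is }x\in[W]\text{ with }\left|\tfrac1n S_0^nf(\omega,x)-\alpha\right|<\epsilon\right\}.$$
The key estimate links the cardinality of $G_n$ to the partition function $Z_n(f^\alpha_p,\omega)$. For any $W\in G_n$, applying the Cauchy--Schwarz inequality to the witnessing point gives $S_0^nf^\alpha_p(\omega,x)=\langle p,S_0^nf(\omega,x)-n\alpha\rangle\ge-|p|\,n\epsilon$, so that $\sup_{x\in[W]}\exp(S_0^nf^\alpha_p(\omega,x))\ge e^{-|p|n\epsilon}$. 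Summing over $W\in G_n$ and comparing with the full sum defining $Z_n(f^\alpha_p,\omega)$ yields $\#G_n\le e^{|p|n\epsilon}\,Z_n(f^\alpha_p,\omega)$.

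Finally I assemble the cover. Since every $x\in\Delta^\omega_{\mathrm{cap}}(\{\alpha\})$ has, for each $N$, some index $n\ge N$ with $x|_{[0,n)}\in G_n$, the family $\{[W]:W\in G_n,\ n\ge N\}$ covers $\Delta^\omega_{\mathrm{cap}}(\{\alpha\})$ by cylinders of length at least $N$. Hence
$$\mathcal{H}^s_N(\Delta^\omega_{\mathrm{cap}}(\{\alpha\}))\le\sum_{n\ge N}\#G_n\,e^{-ns}\le\sum_{n\ge N}e^{n(|p|\epsilon-s)}Z_n(f^\alpha_p,\omega).$$
Using the definition of $P(f^\alpha_p,\omega)$ as a $\limsup$, for any $\delta>0$ one has $Z_n(f^\alpha_p,\omega)\le e^{n(P(f^\alpha_p,\omega)+\delta)}$ for all large $n$, so the tail is dominated by a geometric series with ratio $e^{|p|\epsilon+P(f^\alpha_p,\omega)+\delta-s}$, which tends to $0$ as $N\To\infty$ whenever $s>P(f^\alpha_p,\omega)+|p|\epsilon+\delta$. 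Thus $h^B(\Delta^\omega_{\mathrm{cap}}(\{\alpha\}))\le P(f^\alpha_p,\omega)+|p|\epsilon+\delta$, and letting $\delta\To0$ and then $\epsilon\To0$ (with $p$ fixed) gives $h^B(\Delta^\omega_{\mathrm{cap}}(\{\alpha\}))\le P(f^\alpha_p,\omega)$, completing the reduction.

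The argument is essentially a single weighted covering estimate, so I do not expect a serious obstacle: the one point requiring care is that the bound must hold for \emph{every} $\omega$, not merely for $\nu$-almost every $\omega$ as in Lemma \ref{l:1upperbound}. This is automatic here because the estimate is expressed directly through $P(f^\alpha_p,\omega)=\limsup_n\frac1n\log Z_n(f^\alpha_p,\omega)$ rather than through its $\nu$-average $P_\nu$, so no exceptional null set is introduced.
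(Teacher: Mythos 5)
Your proposal is correct and follows essentially the same weighted covering argument as the paper: both fix a near-optimal pair $(\alpha,p)$, cover $\Delta^\omega_{\mathrm{sup}}(C)$ by cylinders on which the Birkhoff average of $f$ is $\epsilon$-close to $\alpha$ (valid since $\alpha\in A(\omega,x)$ forces infinitely many such return times), bound the number of such cylinders by $e^{|p|n\epsilon}Z_n(f^\alpha_p,\omega)$, and let the geometric tail vanish. The only cosmetic differences are that you route the argument through $\Delta^\omega_{\mathrm{cap}}(\{\alpha\})$ and use a witness point in each cylinder where the paper invokes uniform continuity of $f(\omega,\cdot)$ to control $\sup_{y\in[W]}$; neither changes the substance.
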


\begin{proof}
	Fix any $s>s_0>\inf_{\alpha\in C,p\in\R^d}P(f^\alpha_p,\omega)$.
	Then there exists $\alpha\in C$ and $p\in\R^d$ such that $P(f^\alpha_p,\omega)<s_0$.
	Then there exists $N_1>0$ such that for every $n>N_1$,
	$$\sum_{W\in\Sigma_{A,n}}\sup_{x\in [W]}e^{\langle p, S_0^nf(\omega,x)-n\alpha\rangle}<e^{s_0n}.$$
	Fix $M\ge 1$ such that $\frac{2|p|}{M}<\frac{s-s_0}2$.
	By uniform continuity of $f(\omega,\cdot)$, there exists $N_2$ such that for every $n>N_2$, $W\in\Sigma_{A,n}$ and $x,y\in[W]$,
	$$\left|S_0^nf(\omega,x)-S_0^nf(\omega,y)\right|<\frac nM.$$
	Let $N>\max\{N_1,N_2\}$.
	By definition of $\Delta^\omega_{\mathrm{sup}}(C)$,
	$$\Delta^\omega_{\mathrm{sup}}(C)\subset\bigcup_{n\ge N}\left\{[W]:W\in \Sigma_{A,n}\text{ and }\sup_{y\in[W]}\left|\frac1nS_0^nf(\omega,y)-\alpha\right|<\frac2M\right\}.$$
	Hence,
	$$
	\begin{aligned}
	\mathcal{H}_N^s(\Delta^\omega_{\mathrm{sup}}(C))\le&\sum_{n\ge N}\sum_{\substack{W\in\Sigma_{A,n}\\\sup_{y\in[W]}\left|S_0^nf(\omega,y)-n\alpha\right|<\frac{2n}M}}e^{-ns}\\
	\le&\sum_{n\ge N}e^{-\frac{n(s-s_0)}2}\sum_{\substack{W\in\Sigma_{A,n}\\\sup_{y\in[W]}\left|S_0^nf(\omega,y)-n\alpha\right|<\frac{2n}M}}e^{-ns_0}\cdot e^{-|p|\cdot\frac{2n}M}\\
	\le&\sum_{n\ge N}e^{-\frac{n(s-s_0)}2}\sum_{\substack{W\in\Sigma_{A,n}\\\sup_{y\in[W]}\left|S_0^nf(\omega,y)-n\alpha\right|<\frac{2n}M}}e^{-ns_0}\cdot \sup_{y\in[W]}e^{\langle p,S_0^nf_p^\alpha(\omega,y)-n\alpha\rangle}\\
	\le&\sum_{n\ge N}e^{-\frac{n(s-s_0)}2}\To0\text{ as }N\To\infty.
	\end{aligned}
	$$
	Thus, $h^B(\Delta^\omega_{\mathrm{sup}}(C))\le s$, and by the arbitrariness of $s$, it ends the proof.
\end{proof}

To give the lower bound, we construct a subset of $\Delta^\omega_{\mathrm{sup}}(C)$.
Here, we fix some countable subset $C'=\{\alpha_1,\alpha_2,\dots\}\subset C$ such that $\overline{C'}\supset C$.

For convenience, we give some notations and lemmas.

Although $f$ is defined on $\Gamma_A$, it can be extended to $\Gamma$ by the following way.
For each $x\in\Sigma$, let $n(x):=\max\{n\ge 0: x|_{[0,n)}\in\Sigma_{A,*}\}$ and $$f(\omega,x):=\sup_{y\in [x|_{[0,n(x))}]}f(\omega,y)$$
by the partial order $\leqq$ is defined as
$$\alpha\leqq\beta\overset{\mathrm{def}}{\Leftrightarrow}|\alpha|<|\beta|\text{ or }|\alpha|=|\beta|,\alpha_n<\beta_n\text{ where }n=\min\{1\le k\le d:\alpha_k\neq\beta_k\}.$$
For a subset $M\subset \Sigma$, we denote by $W_a^b(M)=\{W\in \mathcal{A}^{b-a}:\text{there exists }x\in M\text{ such that }x|_{[a,b)}=W\}$.

The following lemma is a direct generalization of \cite[Lemma 4.1]{BRS20}.

\begin{lemma}
	For $\epsilon>0$, $N\in\N$, $\alpha\in\R^d$ and $(\omega,x)\in\Gamma$. If $|S_0^nf(\omega,x)-\alpha|<\epsilon$ for all $n> N$, then for any $n>m>N$, we have $$|S_m^nf(\omega,x)-\alpha|<\frac{(n+m)\epsilon}{n-m}.$$
\end{lemma}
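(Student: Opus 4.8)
The plan is to deduce this entirely from the additivity of the sum over its range of indices, together with the observation that the block average is an affine combination of two initial averages whose coefficients add up to $1$. First I would record the cocycle identity, which is immediate from the definition $S_a^b f(\omega,x)=\sum_{i=a}^{b-1}f(\sigma^i\omega,\sigma^i x)$: for all $0\le m\le n$,
\[
S_0^n f(\omega,x)=S_0^m f(\omega,x)+S_m^n f(\omega,x).
\]
Dividing by the relevant block lengths, this rewrites the average of $f$ over the block $[m,n)$ as
\[
\frac{1}{n-m}S_m^n f(\omega,x)=\frac{n}{n-m}\cdot\frac1n S_0^n f(\omega,x)-\frac{m}{n-m}\cdot\frac1m S_0^m f(\omega,x).
\]

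The key point is that the two coefficients $\frac{n}{n-m}$ and $-\frac{m}{n-m}$ sum to $1$, so they also reproduce the target value: $\alpha=\frac{n}{n-m}\alpha-\frac{m}{n-m}\alpha$. Subtracting this identity from the previous display gives
\[
\frac{1}{n-m}S_m^n f(\omega,x)-\alpha=\frac{n}{n-m}\Big(\tfrac1n S_0^n f(\omega,x)-\alpha\Big)-\frac{m}{n-m}\Big(\tfrac1m S_0^m f(\omega,x)-\alpha\Big).
\]
Since $n>m>N$, both coefficients $\frac{n}{n-m}$ and $\frac{m}{n-m}$ are positive and the hypothesis applies to both indices $n$ and $m$, so that $|\tfrac1n S_0^n f(\omega,x)-\alpha|<\epsilon$ and $|\tfrac1m S_0^m f(\omega,x)-\alpha|<\epsilon$. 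Applying the triangle inequality to the right-hand side then yields
\[
\Big|\frac{1}{n-m}S_m^n f(\omega,x)-\alpha\Big|<\frac{n}{n-m}\epsilon+\frac{m}{n-m}\epsilon=\frac{(n+m)\epsilon}{n-m},
\]
which is exactly the claimed bound.

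There is no serious obstacle in this argument; it is the direct analogue of \cite[Lemma 4.1]{BRS20}. The only thing to keep track of is the bookkeeping of the normalizing factors, since the block $[m,n)$ has length $n-m$ while the initial blocks $[0,n)$ and $[0,m)$ have lengths $n$ and $m$ respectively; it is precisely this that forces the coefficients of the affine combination to sum to $1$ and hence to preserve the fixed vector $\alpha$. In the sequel this estimate is what lets us upgrade control of the full averages $\frac1n S_0^n f(\omega,x)$, which the hypothesis supplies, into control of the block averages $\frac{1}{n-m}S_m^n f(\omega,x)$ that is needed when concatenating admissible words in the construction of the lower bound for $\Delta^\omega_{\mathrm{sup}}(C)$.
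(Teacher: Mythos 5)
Your proof is correct and is exactly the standard argument: the paper itself gives no proof, merely noting the lemma is "a direct generalization of \cite[Lemma 4.1]{BRS20}", and your affine-combination/triangle-inequality computation is precisely that cited argument. Note that you have (rightly) read the statement with the normalizations $\frac1n$, $\frac1m$, $\frac1{n-m}$ inserted --- the literal unnormalized statement would be false --- and this normalized reading is the one consistent with both \cite[Lemma 4.1]{BRS20} and the way the lemma is invoked later in the proof of Lemma \ref{l:Moren}.
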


The following lemma is a corollary of \cite[Lemma 4.2]{BRS20} by taking $\phi_i=f(\sigma^i\omega,\cdot)$.

\begin{lemma}
	Let $\{q_j\}_{j\in\N}$ be an increasing sequence of integers satisfying $\lim_{j\To\infty}\frac{q_j}{j}=\infty$, $q_{j+1}-q_j>2r$ and $\lim_{j\To\infty}(q_{j+1}-q_j)=\infty$. Let $\pi:\Sigma\To\Sigma$ be a map satisfying the following properties:
	\begin{itemize}
		\item[(1)] if $x|_{[0,n)}=y|_{[0,n)}$ for $q_j<n\le q_{j+1}$, then $(\pi x)|_{[0,q_j)}=(\pi y)|_{[0,q_j)}$;
		\item[(2)] if $x_k\neq(\pi x)_k$, then $k\in\{q_j+1,\cdots,q_j+r\}$ for some $j$.
	\end{itemize}

	Then for any $(\omega,x)\in\Gamma$, $A(\omega,x)=A(\omega,\pi x)$ and for any $X\subset\Sigma$, $h^B(X)=h^B(\pi(X))$.
\end{lemma}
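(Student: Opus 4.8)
The plan is to deduce the statement from \cite[Lemma 4.2]{BRS20} applied to the sequence of potentials $\phi_i:=f(\sigma^i\omega,\cdot)$ on $\Sigma$; the only hypothesis one must check is that these potentials have uniformly summable variation, which is immediate from $\sum_i\mathrm{var}_i f<\infty$ (the variation of the extension of $f$ to $\Gamma$ being controlled by that of $f$). For completeness I would also record the two mechanisms behind the two assertions, since both rest on the single fact that $\pi$ is a \emph{sparse} perturbation: by (2) the coordinates where $x$ and $\pi x$ differ all lie in the blocks $B_j:=\{q_j+1,\dots,q_j+r\}$, and these blocks are disjoint (using $q_{j+1}-q_j>2r$) and sparse (using $q_j/j\to\infty$), so that the number of differing coordinates in $[0,n)$ is at most $r\cdot\#\{j:q_j<n\}=o(n)$.

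For the equality $A(\omega,x)=A(\omega,\pi x)$, I would estimate the difference of the two Birkhoff sums. For each $i$ the points $\sigma^i x$ and $\sigma^i\pi x$ agree on an initial segment of length $d(i)$, where $d(i)$ is the distance from $i$ to the next differing coordinate, so bounded variation gives $|f(\sigma^i\omega,\sigma^i x)-f(\sigma^i\omega,\sigma^i\pi x)|\le\mathrm{var}_{d(i)}f$. Grouping the indices $i$ by the differing coordinate they point to, each group contributes at most $\sum_{\ell\ge0}\mathrm{var}_\ell f<\infty$, so $\sum_{i=0}^{n-1}\mathrm{var}_{d(i)}f\le\big(r\,\#\{j:q_j<n\}+1\big)\sum_{\ell\ge0}\mathrm{var}_\ell f=o(n)$. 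Dividing by $n$ shows $\tfrac1n\big(S_0^n f(\omega,x)-S_0^n f(\omega,\pi x)\big)\to 0$, whence the two averaged orbits have the same set of limit points.

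For $h^B(X)=h^B(\pi(X))$ I would transfer covers in both directions. The key estimate is that $\pi$ distorts cylinders only sub-exponentially: if $|W|=\ell$ and $q_j<\ell\le q_{j+1}$, then for $x\in[W]$ the word $(\pi x)|_{[0,\ell)}$ agrees with $W$ off the blocks meeting $[0,\ell)$, i.e.\ off at most $(j+1)r$ coordinates; hence $\pi([W])$ meets at most $K^{(j+1)r}$ cylinders of length $\ell$, and symmetrically $\pi^{-1}([V])\cap X$ is covered by at most $K^{(j+1)r}$ cylinders of length $|V|$. Given $s'>h^B(X)$, $\delta>0$ and $s=s'+\delta$, I would take a cover of $X$ by cylinders of length $>N$ with $\sum e^{-s'|W|}$ small; since $q_j/j\to\infty$, for $N$ large one has $(j+1)r\log K\le\delta q_j\le\delta\ell$, so the induced cover of $\pi(X)$ satisfies $\sum e^{-s|V|}\le\sum K^{(j+1)r}e^{-s|W|}\le\sum e^{-s'|W|}$, giving $\mathcal{H}^s(\pi(X))=0$ and $h^B(\pi(X))\le h^B(X)$. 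The reverse inequality is identical using the symmetric bound on $\pi^{-1}$.

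The main obstacle is the cylinder bookkeeping in the entropy equality: because $q_{j+1}-q_j\to\infty$, one cannot naively align cylinders to the grid $\{q_j\}$, since extending or truncating to a $q$-boundary blows up the weight $e^{-s\cdot\mathrm{length}}$; the distortion must instead be absorbed at the original length $\ell$, and the decisive point is that the per-cylinder factor $K^{(j+1)r}=e^{O(jr)}$ is $e^{o(\ell)}$ precisely because $q_j/j\to\infty$. Verifying that the extension of $f$ to $\Gamma$ retains summable variation (so that the argument applies even when $\pi x\notin\Sigma_A$) is the only other point requiring care.
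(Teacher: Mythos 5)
Your proposal matches the paper exactly: the paper gives no proof of this lemma beyond the remark that it is a corollary of \cite[Lemma 4.2]{BRS20} applied to the potentials $\phi_i=f(\sigma^i\omega,\cdot)$, which is precisely your first step. The additional mechanisms you supply (sparsity of the perturbed blocks giving $o(n)$ differing coordinates for the Birkhoff averages, and the sub-exponential cylinder-distortion factor $K^{(j+1)r}=e^{o(\ell)}$ for the Bowen entropy transfer) are correct and consistent with the cited argument, so no gap remains.
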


Now we construct a suitable subset of $\Sigma$.

\begin{lemma}\label{l:Moren}
	Fix any $\delta>0$. There exists $\widetilde{M}\subset\Sigma_A$ such that
	\begin{itemize}
		\item[(1)] for any $x\in \widetilde{M}$, $A(\omega,x)\supset C$;
		\item[(2)] $h^B(\widetilde{M})\ge \inf_{\alpha\in C}\sup\{h_\mu:\mu\in\mathcal{E}_\nu(\Gamma_A)\text{ and }\int f d\mu=\alpha\}-h_\nu-2\delta$.
	\end{itemize}
\end{lemma}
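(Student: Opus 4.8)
The plan is to realise $\widetilde{M}$ as a Moran-type Cantor set whose points are concatenations of long blocks that are ``typical'' for a sequence of ergodic measures realising the various $\alpha_k$, cycling through all of $C'=\{\alpha_1,\alpha_2,\dots\}$ so that each $\alpha_k$ (hence all of $\overline{C'}\supset C$) becomes an accumulation value of the Birkhoff averages, while the block counts deliver the required entropy. First I would fix the target
$$H:=\inf_{\alpha\in C}\sup\{h_\mu:\mu\in\mathcal{E}_\nu(\Gamma_A)\text{ and }\int f\,d\mu=\alpha\}.$$
For each $k$, since $\alpha_k\in C$ the supremum at $\alpha_k$ is at least $H$, so I can choose an ergodic $\mu_k\in\mathcal{E}_\nu(\Gamma_A)$ with $\int f\,d\mu_k=\alpha_k$ and $h_{\mu_k}>H-\delta$; by the Pinsker-type formula $h_\mu=h_\nu+h_\mu^\xi$ this gives $h_{\mu_k}^\xi>H-h_\nu-\delta$. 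Then, exactly as in the proof of Theorem \ref{t:fo}, for $\nu$-a.e.\ $\omega$ --- simultaneously for all $k$, since there are only countably many --- Shannon--McMillan--Breiman for the fibre measures $\mu_{k,\omega}^\xi$, together with the ergodic theorem and the bounded variation of $f$, produces for every large $m$ a collection $\mathcal{W}_m^{(k)}\subset\Sigma_{A,m}$ with $\#\mathcal{W}_m^{(k)}\ge e^{m(h_{\mu_k}^\xi-\delta)}\ge e^{m(H-h_\nu-2\delta)}$ such that on each $[W]$ one has $|\frac1m S_0^m f(\omega,\cdot)-\alpha_k|<\delta$, and such that $\mu_{k,\omega}^\xi$ has the prefix decay $\mu_{k,\omega}^\xi([x|_{[0,b)}])\le e^{-b(h_{\mu_k}^\xi-\delta)}$ for all large $b$ on a set of blocks of nearly full measure. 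I would also fix an index sequence $k(1),k(2),\dots$ running through every positive integer infinitely often.

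Next comes the concatenation. I would choose block lengths $m_j\to\infty$ and repetition counts $\ell_j\to\infty$, set $n_j=\ell_j m_j$ and $q_j=\sum_{i\le j}n_i$, and let stage $j$ consist of $\ell_j$ freely chosen blocks from $\mathcal{W}_{m_j}^{(k(j))}$ placed in the coordinate window $(q_{j-1},q_j]$. The parameters are tuned so that $q_{j-1}=o(n_j)$: then at the end of stage $j$ the average $\frac1{q_j}S_0^{q_j}f(\omega,x)$ is dominated by the stage-$j$ blocks and lies within $2\delta$ of $\alpha_{k(j)}$, so every $\alpha_k$ is an accumulation point of the averages; since $A(\omega,x)$ is closed this yields $C\subset\overline{C'}\subset A(\omega,x)$, giving (1). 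Because the free concatenation of admissible words need not be admissible, the actual set is obtained by applying the gluing map $\pi$ of the lemma established above (inserting connecting words of length $\le r$ at the positions $q_j$, legitimate by aperiodic irreducibility), which changes neither $A(\omega,\cdot)$ nor Bowen entropy; I take $\widetilde{M}=\pi(M)$, where $M\subset\Sigma$ is the set of all such concatenations, so that $\widetilde{M}\subset\Sigma_A$.

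For the entropy bound (2) I would equip $M$ with the natural Moran measure: independently across stages and across the $\ell_j$ slots, distribute mass on each block according to the renormalised conditional measure $\mu_{k(j),\omega}^\xi$ from the first step (rather than the uniform measure), so that the prefix decay controls partial blocks as well as completed ones. A cylinder $[x|_{[0,n)}]$ with $q_{j-1}<n\le q_j$ then receives mass at most $e^{-(H-h_\nu-2\delta)n}$ up to a fixed multiplicative constant: the completed blocks contribute the rate $h_{\mu_{k(i)}}^\xi-\delta\ge H-h_\nu-2\delta$ per coordinate, and the current partial block contributes at the same rate by the prefix estimate, the only losses being bounded-length initial segments absorbed into the constant. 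Lemma \ref{l:FL} then gives $h^B(M)\ge H-h_\nu-2\delta$, and invariance of $h^B$ under $\pi$ transfers this to $\widetilde{M}$.

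I expect the main obstacle to be precisely this last scale-uniform mass estimate. The accumulation requirement in (1) forces the stages to grow so fast that, at lengths $n$ sitting inside a single enormous stage, a uniform Moran measure would wildly overestimate cylinder mass, since a partial block carries no combinatorial choices yet adds length comparable to $q_j$. Overcoming this is exactly why the measure must be assembled from the conditional measures $\mu_{k,\omega}^\xi$ and why Shannon--McMillan--Breiman must be invoked for prefixes, so that entropy accrues at the correct rate throughout each block rather than only at stage boundaries. The delicate balance to maintain is keeping each $m_j$ large enough for the prefix estimate to take effect, yet controlled relative to $n_j$, while still enforcing the condition $q_{j-1}=o(n_j)$ needed for the averages to cluster at every $\alpha_k$.
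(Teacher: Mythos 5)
Your overall architecture is the paper's: a Moran-type set built by concatenating long blocks typical for ergodic measures $\mu_k$ realizing the $\alpha_k$, cycling through $C'$, followed by the gluing map $\pi$ and the Frostman lemma (Lemma \ref{l:FL}). But two steps as written do not go through in this \emph{weighted} setting, where $\omega$ is a fixed generic point. First, your block collections $\mathcal{W}_m^{(k)}$ are characterized by the behaviour of $S_0^mf(\omega,\cdot)$, i.e.\ by goodness \emph{at position $0$}, and you then transplant these words to the windows $[q_{j-1}+am_j,\,q_{j-1}+(a+1)m_j)$. The contribution of such a window to $S_0^{q_j}f(\omega,x)$ is $\sum_{i=s}^{s+m_j-1}f(\sigma^i\omega,\sigma^ix)$ with $s=q_{j-1}+am_j$, a Birkhoff sum along $\sigma^s\omega$; a word good at position $0$ need not be good at position $s$, so the claim that the stage-$j$ average lands near $\alpha_{k(j)}$ is unjustified. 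The paper avoids this by taking as admissible blocks for the window $[a,b)$ the words $W_a^b(M_k)$ actually occurring in $M_k$ \emph{at those positions}, and by invoking the telescoping lemma (if $|S_0^nf-n\alpha|<n\epsilon$ for all $n>N$ then $|S_m^nf-(n-m)\alpha|<(n+m)\epsilon$) to convert control of initial sums into control of window sums; the price is the factor $(n+m)/(n-m)$, which is exactly why the windows inside stage $j$ are the geometrically spaced $t_l^j=\lfloor r_j^lT_{j-1}\rfloor$ with $r_j\to1$ but $(r_j-1)/\epsilon'_j\to\infty$, rather than equal-length blocks. The same position-dependence infects your entropy estimate: the counting lower bound must be proved for $\#W_a^b(M_k)$, which is the content of the paper's claim $\log\#W_{n_0}^n(M_k)>(n-n_0)(h_{\mu_k}-h_\nu-\delta)$, and your SMB prefix decay is likewise a statement about cylinders at position $0$, not about the conditional distribution of a block sitting at position $s$.

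Second, you run the whole construction at the fixed precision $\delta$ of the lemma: each block only guarantees $|\frac1mS_0^mf-\alpha_k|<\delta$, so at best $A(\omega,x)$ meets $B(\alpha_k,2\delta)$ for every $k$, not $\alpha_k\in A(\omega,x)$; conclusion (1), $A(\omega,x)\supset C$, does not follow as written. The repair is the paper's double indexing $(L,k)\mapsto j$: each $\alpha_k$ is revisited infinitely often with precisions $\epsilon_L\to0$ decoupled from the entropy-loss parameter $\delta$, with $T_j\epsilon'_j\to\infty$ keeping the precision compatible with the SMB waiting times. Your idea of building the Frostman measure from the conditional measures $\mu_k^{\omega,\xi}$ is a genuinely different (and in principle workable) device for the intermediate-scale mass estimate, but the paper gets by with the plain uniform counting measure $\hat\mu$ on word counts precisely because the sub-blocks $[t_l^j,t_{l+1}^j)$ have length ratio $r_j\to1$, so a partial block costs only a multiplicative factor tending to $1$ in the exponent; once you adopt that subdivision, the conditional-measure machinery (and its additional position-dependence problems) becomes unnecessary.
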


\begin{proof}
	For each $k\in\N$, we choose $\mu_k\in\mathcal{E}_\nu(\Gamma_A)$ such that $\int f d\mu_k=\alpha_k$ and
	$$h_{\mu_k}>\sup\{h_\mu:\mu\in\mathcal{E}_\nu(\Gamma_A)\text{ and }\int f d\mu=\alpha_k\}-\delta.$$
	By Rohlin's Disintegration Theorem, denote the disintegration of $\mu_k$ by $$\mu_k=\int\mu_k^{\omega,\xi}d\nu(\omega).$$
	By ergodic theorem, there exists $G_k\subset \Gamma_A$ such that $\mu_k(G_k)=1$ and
	$$\lim_{n\To\infty}\frac1nS_0^nf(\omega,x)=\alpha_k\text{ for each }(\omega,x)\in G_k.$$
	Therefore, let $$G'_k=\{\omega\in\Omega:\mu_k^{\omega,\xi}(\Delta^\omega_{\mathrm{equ}}(\{\alpha_k\}))=1\},$$
	and then we have $\nu(G'_k)=1$. Let $G^*=\bigcap_{k\in\N}G'_k$. Then we have $\nu(G^*)=1$ and for any $k\in\N$ and $\omega\in G^*$, we have $\mu_k^{\omega,\xi}(\Delta^\omega_{\mathrm{equ}}(\{\alpha_k\}))=1$.
	On the other hand, there exists $G^{**}\subset\Omega$ such that $\nu(G^{**})=1$ and for any $\omega\in G^{**}$ and $k\in\N$,
	$$\lim_{n\To\infty}-\frac1n\log\mu_k^{\omega,\xi}([x|_{[0,n)}])=h_{\mu_k}-h_\nu\text{ for $\mu_k^{\omega,\xi}$-a.e. $x\in\Sigma_A$}.$$
	
	Let $G=G^*\cap G^{**}$. Now we fix $\omega\in G$ and a sequence $\{\epsilon_L\}_{L\in\N}$ with $\lim_{L\To\infty}\epsilon_L=0$. For each  $k\in\N$, there exists $M_k\subset\Sigma_A$ such that $\mu_k^{\omega,\xi}(M_k)>1-\delta$, and for each $\epsilon_L$ there exists $N_{L,k}$ such that for any $x\in M_k$ and $n>N_{L,k}$,
	$$\left|\frac1nS_0^nf(\omega,x)-\alpha_k\right|<\epsilon_L,$$
	and
	$$\left|-\frac1n\log\mu_k^{\omega,\xi}([x|_{[0,n)}])-(h_{\mu_k}-h_\nu)\right|<\epsilon_L.$$
	
	Next, for each $k\in\N$, we claim that for any $N\in\N$, there exists $n_0>N$ such that for any $n>n_0$,
	$$\log\#W_{n_0}^n(M_k)>(n-n_0)(h_{\mu_k}-h_\nu-\delta).$$
	We prove it by a contradiction. Assume that there exists $N$ such that for any $n>N$, there is $n'>n$ such that $\log\#W_{n}^{n'}(M_k)\le(n'-n)(h_{\mu_k}-h_\nu-\delta)$.
	Then there is a sequence $\{n_i\}_{i\ge 1}$ such that for each $i\ge 1$,  $\log\#W_{n_i}^{n_{i+1}}(M_k)\le(n_{i+1}-n_{i})(h_{\mu_k}-h_\nu-\delta)$.
	Thus, $\log\#W_0^{n_{i+1}}(M_k)\le \log\#W_0^{n_1}(M_k)+(n_{i+1}-n_1)(h_{\mu_k}-h_\nu-\delta)$, that is,
	$$\limsup_{i\To\infty}\frac1{n_{i+1}}\log\#W_0^{n_{i+1}}(M_k)\le h_{\mu_k}-h_\nu-\delta.$$
	For $i$ large enough and any $x\in M_k$, we have
	$$-\frac1{n_{i+1}}\log\mu_k^{\omega,\xi}([x|_{[0,n_{i+1})}])>h_{\mu_k}-h_\nu-\frac{\delta}{2},$$
	that is, $\mu_k^{\omega,\xi}([x|_{[0,n_{i+1})}])<\exp(-n_{i+1}(h_{\mu_k}-h_\nu-\frac{\delta}{2}))$.
	Since $\mu_k^{\omega,\xi}(M_k)>1-\delta$, we have $\#W_0^{n_{i+1}}(M_k)\ge (1-\delta)\exp(n_{i+1}(h_{\mu_k}-h_\nu-\frac{\delta}{2}))$.
	So
	$$\liminf_{i\To\infty}\frac1{n_{i+1}}\log\#W_0^{n_{i+1}}(M_k)\ge h_{\mu_k}-h_\nu-\frac\delta2, $$
	which is a contradiction.
	
	So for each $k\in\N$, we choose a sequence $\{n_i^k\}_{i\ge 1}$ such that for any $i\ge 1$ and $n>n_i^k$, we have
	$$\log\#W_{n_i^k}^n(M_k)>(n-n_i^k)(h_{\mu_k}-h_\nu-\delta).$$
	
	For each $L\in\N$, by the denseness of $\{\alpha_1, \alpha_2,\dots\}$, there exists $J(L)$ such that
	$$\bigcup_{k=1}^{J(L)}B(\alpha_k,\epsilon_L)\supset C.$$
	Define $I=\{(L,k):L\ge 1,\,1\le k\le J(L)\}$ and a map $\lambda:I\To\{1,2,\dots\}$ by $\lambda(L,k)=k+\sum_{i=1}^{L-1}J(i)$.
	It is easy to see that $\lambda$ is a bijection.
	Define $\{\epsilon'_j\}_{j\in\N}$, $\{M'_j\}_{j\in\N}$, $\{\mu'_j\}_{j\in\N}$ and $\{\alpha'_j\}$ by $\epsilon'_j=\epsilon_L$, $M'_j=M_k$, $\mu'_j=\mu_k$ and $\alpha'_j=\alpha_k$ where $j=\lambda(L,k)$.
	So $\lim_{j\To\infty}\epsilon'_j=0$.
	Now we define a sequence $\{T_j\}_{j\in\N}$ satisfying the following properties:
	\begin{itemize}
		\item $T_0=0$, $T_1>N_{1,1}$;
		\item $T_j>N_{L,k}$ and $T_j\in\{n_i^k\}_{i\in\N}$, where $j+1=\lambda(L,k)$;
		\item $\lim_{j\To\infty}\frac{\sum_{i=1}^{j}T_i}{T_{j+1}}=0$;
		\item $\log\#W_0^n(M'_j)\ge n(h_{\mu'_j}-h_\nu-\delta)$ for all $n>T_j$;
		\item $\lim_{j\To\infty}T_j\epsilon'_j=\infty$.
	\end{itemize}
	For $j\ge 2$, define $1<r_j\le 2$ and $m(j)\in\N$ such that
	$$r_j^{m(j)}=\frac{T_j}{T_{j-1}},\,\lim_{j\To\infty}r_j=1,\,\lim_{j\To\infty}\frac{r_j-1}{\epsilon'_j}=\infty.$$
	For $j\ge 2$, let $t_l^j=\lfloor (r_j)^lT_{j-1}\rfloor$ for $l=0,1,\dots,m(j)$.
	Notice that $t_0^j=T_{j-1}$ and $t_{m(j)}^j=T_j=t_0^{j+1}$.
	Define a set
	$$
	\begin{aligned}
	M=\{x\in\Sigma:&x|_{[0,T_1)}\in W_0^{T_1}(M'_1),\\ &x|_{[t_l^j,t_{l+1}^j)}\in W_{t_l^j}^{t_{l+1}^j}(M'_j)\text{ for }j\ge 2\text{ and }0\le l\le m(j)-1\}.
	\end{aligned}
	$$
	Now we will show that the set $M$ is required.

	To prove $(1)$, fix any $\alpha_k\in C'$ and $x\in M$. We claim that
	$$\lim_{L\To\infty}\left|\frac1{T_{\lambda(L,k)}}S_0^{T_{\lambda(L,k)}}f(\omega,x)-\alpha_k\right|=0.$$
	For simplicity, let $n_L:=\lambda(L,k)$.
	Then
	$$
	\begin{aligned}
	&\left|S_0^{T_{n_L}}f(\omega,x)-T_{n_L}\alpha_k\right|\\
	\le&\left|S_0^{T_1}f(\omega,x)-T_1\alpha_1\right|+
	\sum_{j=2}^{n_L}\sum_{l=0}^{m(j)-1}\left|S_{t_l^j}^{t_{l+1}^j}f(\sigma^{t_l^j}\omega,\sigma^{t_l^j}x)-(t_{l+1}^j-t_l^j)\alpha'_j\right|\\
	&+T_1\left|\alpha_1-\alpha_k\right|+\sum_{j=2}^{n_L}(T_j-T_{j-1})\left|\alpha'_j-\alpha_k\right|.
	\end{aligned}
	$$
	For each $j\ge 2$ and $0\le l<m(j)$,
	$$
	\begin{aligned}
	&\left|S_{t_l^j}^{t_{l+1}^j}f(\sigma^{t_l^j}\omega,\sigma^{t_l^j}x)-(t_{l+1}^j-t_l^j)\alpha'_j\right|
	\le\sum_{i=1}^{t_{l+1}^j-t_l^j}\mathrm{var}_i f+(t_{l+1}^j+t_l^j)\epsilon'_j.
	\end{aligned}
	$$
	Thus,
	$$
	\begin{aligned}
	&\left|S_0^{T_{n_L}}f(\omega,x)-T_{n_L}\alpha_k\right|\\
	\le&\sum_{i=1}^{T_1}\mathrm{var}_i f+T_1\epsilon'_1
	+\sum_{j=2}^{n_L}\sum_{l=0}^{m(j)-1}\sum_{i=1}^{t_{l+1}^j-t_l^j}\mathrm{var}_i f+\sum_{j=2}^{n_L}\sum_{l=0}^{m(j)-1}(t_{l+1}^j+t_l^j)\epsilon'_j\\
	&+T_1\left|\alpha_1-\alpha_k\right|+\sum_{j=2}^{n_L-1}(T_j-T_{j-1})\left|\alpha'_j-\alpha_k\right|\quad(\text{Since } \alpha'_{n_L}=\alpha_k)\\
	\le&\sum_{i=1}^{T_1}\mathrm{var}_i f+T_1\epsilon'_1
	+\sum_{j=2}^{n_L}\sum_{l=0}^{m(j)-1}\sum_{i=1}^{t_{l+1}^j-t_l^j}\mathrm{var}_i f+\sum_{j=2}^{n_L}\sum_{l=0}^{m(j)-1}(t_{l+1}^j+t_l^j)\epsilon'_j\\
	&+2T_{n_L-1}\sup_{\alpha\in\mathcal{P}_A}|\alpha|.\quad(\text{Since $\mathcal{P}_A$ is bounded})
	\end{aligned}
	$$
	Since $\lim_{i\To\infty}\mathrm{var}_i f=0$ and $\lim_{j\To\infty}t_{l+1}^j-t_l^j=\infty$, we have
	$$\lim_{L\To\infty}\frac1{T_{n_L}}\left(\sum_{i=1}^{T_1}\mathrm{var}_i f+\sum_{j=2}^{n_L}\sum_{l=0}^{m(j)-1}\sum_{i=1}^{t_{l+1}^j-t_l^j}\mathrm{var}_i f\right)=0.$$
	On the other hand,
	$$
	\begin{aligned}
	\sum_{j=2}^{n_L}\sum_{l=0}^{m(j)-1}(t_{l+1}^j+t_l^j)\epsilon'_j
	\le&\sum_{j=2}^{n_L}\sum_{l=0}^{m(j)-1}(r_j+1)r_j^lT_{j-1}\epsilon'_j\\
	\le&\sum_{j=2}^{n_L}\frac{3\epsilon'_jT_{j-1}(r_j^{m(j)+1}-1)}{r_j-1}\\
	\le&\sum_{j=2}^{n_L}\frac{3\epsilon'_jT_j}{r_j-1}.
	\end{aligned}
	$$
	Since $\lim_{j\To\infty}\frac{\sum_{i=1}^{j}T_i}{T_{j+1}}=0$ and $\lim_{j\To\infty}\frac{\epsilon'_j}{r_j-1}=0$, as $L\To\infty$,
	$$\frac1{T_{n_L}}\sum_{j=2}^{n_L}\sum_{l=0}^{m(j)-1}(t_{l+1}^j+t_l^j)\epsilon'_j
	\le\sum_{j=2}^{n_L-1}\frac{3\epsilon'_jT_j}{T_{n_L}(r_j-1)}+\frac{3\epsilon'_{n_L}}{r_{n_L}-1}\To0.
	$$
	Thus, it proves the claim. By the denseness of $C'$, we have $A(\omega,x)\supset C$ for every $x\in M$.
	
	To prove (2), for any $j\ge 2$, $0\le l<m(j)$ and $t_l^j\le n<t_{l+1}^j$, we have
	$$\#W_{T_{j-1}}^{t_l^j}(M)\ge\#W_{T_{j-1}}^{t_l^j}(M'_j)\ge \exp((t_l^j-T_{j-1})(h_{\mu'_j}-h_\nu-\delta)).$$
	In particular, we have
	$$\#W_{T_{j-1}}^{T_j}(M)\ge \exp((T_j-T_{j-1})(h_{\mu'_j}-h_\nu-\delta).$$
	So
	$$
	\begin{aligned}
	\#W_{0}^{t_l^j}(M)\ge& \#W_{T_{j-1}}^{t_l^j}(M)\cdot\prod_{i=1}^{j-1}\#W_{T_{i-1}}^{T_i}(M)\\
	\ge&\exp\left((t_l^j-T_{j-1})(h_{\mu'_j}-h_\nu-\delta)+\sum_{i=1}^{j-1}(T_i-T_{i-1})(h_{\mu'_i}-h_\nu-\delta)\right)\\
	\ge&\exp\left(t_l^j(\inf_{\alpha\in C}\sup\{h_\mu:\mu\in\mathcal{E}_\nu(\Gamma_A)\text{ and }\int f d\mu=\alpha\}-h_\nu-2\delta)\right).
	\end{aligned}
	$$
	Next, we define a probability measure $\hat{\mu}$ as follows.
	For any $W\in\Sigma_n$, there exists unique $j\in\N$ and $0\le l<m(j)$ such that $t_l^j<n\le t_{l+1}^j$. Define
	$$\hat{\mu}([W]):=\frac{\#\{W'\in W_0^{t_{l+1}^j}(M):[W]\supset[W']\}}{\#W_0^{t_{l+1}^j}(M)}.$$
	Indeed, $\hat{\mu}$ is a well defined probability measure on $M$.
	So for any $x\in M$,
	$$
	\begin{aligned}
	&\liminf_{n\To\infty}-\frac1n\log\hat{\mu}([x|_{[0,n)}])\\
	\ge&\liminf_{n\To\infty}\frac{\log\#W_0^{t_l^j}(M)}{n}\\
	\ge&\liminf_{n\To\infty}\frac{t_l^j}{n}\left(\inf_{\alpha\in C}\sup\{h_\mu:\mu\in\mathcal{E}_\nu(\Gamma_A)\text{ and }\int f d\mu=\alpha\}-h_\nu-2\delta\right)\\
	\ge&\liminf_{j\To\infty}\frac{1}{r_j}\left(\inf_{\alpha\in C}\sup\{h_\mu:\mu\in\mathcal{E}_\nu(\Gamma_A)\text{ and }\int f d\mu=\alpha\}-h_\nu-2\delta\right)\\
	=&\inf_{\alpha\in C}\sup\{h_\mu:\mu\in\mathcal{E}_\nu(\Gamma_A)\text{ and }\int f d\mu=\alpha\}-h_\nu-2\delta.
	\end{aligned}
	$$
	Then by Lemma \ref{l:FL}, $h^B(M)\ge\inf_{\alpha\in C}\sup\{h_\mu:\mu\in\mathcal{E}_\nu(\Gamma_A)\text{ and }\int f d\mu=\alpha\}-h_\nu-2\delta$.
	The rest of the proof is similar to the last discussions on the proof of \cite[Proposition 4.3]{BRS20}, which can find a map $\pi:\Sigma\To\Sigma_A$ such that $\widetilde{M}:=\pi(M)$ satisfies (1) and (2).
\end{proof}

\section{Bowen entropy of $\Delta^\omega_{\mathrm{equ}}(C)$}\label{s:5}
In this section, we fix a compact and connected subset $C\subset\mathcal{P}_A^o$, a continuous function $f:\Gamma_A\To\R^d$ with bounded variation and a quasi-Bernoulli $\sigma$-invariant ergodic measure $\nu$ on $\Omega$.

By Section \ref{s:4}, we have
$$h^B(\Delta^\omega_{\mathrm{equ}}(C))\le h^B(\Delta^\omega_{\mathrm{sup}}(C))=\inf_{\alpha\in C}\sup\{h_\mu:\mu\in\mathcal{E}_\nu(\Gamma_A)\text{ and }\int f d\mu=\alpha\}-h_\nu.$$

To estimate the lower bound, we need to adjust the sequence $\{\alpha_k\}_{k\in\N}$ in the construction in Lemma \ref{l:Moren}. We fix an arbitrary decreasing sequence $\{\epsilon_L\}_{L\in\N}$ with $\lim_{L\To\infty}\epsilon_L=0$. Since $C$ is compact and connected, we can find an index set $I=\{(L,k):L\in\N\text{ and } 1\le k \le J(L)\}$ and a sequence $\{\alpha_{L,k}\}_{(L,k)\in I}\subset C$ such that
\begin{itemize}
	\item[(i)] $|\alpha_{L,k}-\alpha_{L,k+1}|<\epsilon_L$ for each $L\in\N$ and $1\le k\le J(L)-1$;
	\item[(ii)] $|\alpha_{L,J(L)}-\alpha_{L+1,1}|<\epsilon_L$ for each $L\in\N$.
	\item[(iii)] $\bigcup_{k=1}^{J(L)}B(\alpha_{L,k},\epsilon_L)\supset C$.
\end{itemize}
Now we define a map $\lambda:I\To\{1,2,\dots\}$ by $\lambda(L,k)=k+\sum_{i=1}^{L-1}J(i)$.
It is easy to see that $\lambda$ is a bijection.
Define $\{\epsilon'_j\}_{j\in\N}$ and $\{\alpha'_j\}_{j\in\N}$ by $\epsilon'_j=\epsilon_L$ and $\alpha'_j=\alpha_{L,k}$ where $j=\lambda(L,k)$. Then we have $|\alpha'_j-\alpha'_{j+1}|<\epsilon'_j$ for any $j\in\N$.
	
\begin{lemma}\label{l:Moren2}
	For $\nu$-a.e. $\omega\in\Omega$,
	$$h^B(\Delta^\omega_{\mathrm{equ}}(C))\ge\inf_{\alpha\in C}\sup\{h_\mu:\mu\in\mathcal{E}_\nu(\Gamma_A)\text{ and }\int f d\mu=\alpha\}-h_\nu.$$
\end{lemma}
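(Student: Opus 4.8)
The plan is to reuse the Moran-set machinery built in Lemma 4.3, but with a crucial upgrade to the block structure so that the set of limit points equals $C$ rather than merely contains it. Recall that in Lemma 4.3 we concatenated blocks drawn from the sets $M'_j = M_k$ associated with targets $\alpha'_j = \alpha_k$, and obtained a set $M$ every point of which has $A(\omega,x) \supset C$. The lower entropy bound $\inf_{\alpha\in C}\sup\{h_\mu : \mu\in\mathcal{E}_\nu(\Gamma_A),\ \int f\,d\mu = \alpha\} - h_\nu - 2\delta$ was obtained there via the Frostman Lemma (Lemma 2.2) applied to an explicit measure $\hat\mu$. So I would first run exactly the same construction, but now using the \emph{reindexed} sequence $\{\alpha'_j\}$ and $\{\epsilon'_j\}$ defined just before this lemma, whose defining feature is the near-continuity condition $|\alpha'_j - \alpha'_{j+1}| < \epsilon'_j$ coming from properties (i)--(iii). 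The same entropy estimate goes through verbatim, giving a Moran set $M$ (and its image $\widetilde M = \pi(M)$) with $h^B(\widetilde M) \ge \inf_{\alpha\in C}\sup\{\cdots\} - h_\nu - 2\delta$.

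The second and genuinely new step is to show that for every $x \in M$ we have $A(\omega,x) = C$, i.e. not only $A(\omega,x)\supset C$ but also $A(\omega,x)\subset C$. The containment $\supset$ follows as in Lemma 4.3 part (1): for each fixed target $\alpha_{L,k}\in C$ the averages $\frac1{T_{\lambda(L,k)}} S_0^{T_{\lambda(L,k)}} f(\omega,x)$ converge to $\alpha_{L,k}$ along the subsequence of ``milestone'' times $T_{\lambda(L,k)}$, and since these targets are dense in the compact set $C$, every point of $C$ is a limit point. For the reverse inclusion $A(\omega,x)\subset C$, I would argue that \emph{all} limit points of $\{\frac1n S_0^n f(\omega,x)\}$ lie in $C$. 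The key estimate is that between two consecutive milestone times the running average cannot stray far from $C$: within the $j$-th macro-block the average is being dragged toward $\alpha'_j \in C$, and when we cross from block $j$ to block $j+1$ the target jumps by at most $\epsilon'_j \to 0$ (this is exactly what conditions (i)--(ii) buy us, and why the sequence had to be reindexed so that consecutive targets are close). Combining the block-average bound $|S_{t_l^j}^{t_{l+1}^j} f - (t_{l+1}^j - t_l^j)\alpha'_j| \le \sum_i \mathrm{var}_i f + (t_{l+1}^j + t_l^j)\epsilon'_j$ from Lemma 4.3 with the fact that the block lengths satisfy $\lim_{j\to\infty}\frac{\sum_{i<j} T_i}{T_j} = 0$, one shows that for any $n$ the partial average $\frac1n S_0^n f(\omega,x)$ is within $o(1)$ of the convex hull of finitely many $\alpha'_j$'s near the ``current'' block, hence within $o(1)$ of $C$ itself (using that $C$ is connected, so it coincides with its own convex-hull-trace along the chain of $\epsilon'_j$-close targets). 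Taking $n\to\infty$ forces every limit point into the closed set $C$.

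The main obstacle I anticipate is precisely this reverse inclusion. In Lemma 4.3 one only needed the averages to hit each target, which is easy since one may look along a sparse subsequence of times; controlling the average at \emph{all} intermediate times $n$ (including the transition windows where one macro-block ends and the next begins, and the ``warm-up'' regions where $\mathrm{var}_i f$ and the $\epsilon'_j$ error have not yet been damped by division by $n$) is more delicate. The conditions $\lim_j r_j = 1$ and $\lim_j \frac{\epsilon'_j}{r_j-1} = \infty$ (wait, one wants $\to 0$) on the geometric subdivision, together with $\lim_j T_j \epsilon'_j = \infty$ and the negligibility of the prefix length, are exactly engineered to make each sub-block long enough that its average settles near $\alpha'_j$ while the targets drift continuously through $C$; I would verify carefully that these give a uniform-in-$x$ bound $\mathrm{dist}\!\big(\frac1n S_0^n f(\omega,x),\, C\big) \to 0$. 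Once $A(\omega,x) = C$ is established for all $x\in M$, the map $\pi$ preserves $A(\omega,\cdot)$ (Lemma 4.2), so $\widetilde M \subset \Delta^\omega_{\mathrm{equ}}(C)$, yielding $h^B(\Delta^\omega_{\mathrm{equ}}(C)) \ge h^B(\widetilde M) \ge \inf_{\alpha\in C}\sup\{h_\mu:\mu\in\mathcal{E}_\nu(\Gamma_A),\ \int f\,d\mu=\alpha\} - h_\nu - 2\delta$, and letting $\delta \to 0$ finishes the proof.
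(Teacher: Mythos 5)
Your proposal follows essentially the same route as the paper's proof: rerun the Moran construction of Lemma \ref{l:Moren} with the reindexed chain $\{\alpha'_j\}$ satisfying $|\alpha'_j-\alpha'_{j+1}|<\epsilon'_j$, verify that the running average $\frac1n S_0^n f(\omega,x)$ stays within $o(1)$ of the current target $\alpha'_k\in C$ at \emph{all} times $n$ (which yields $A(\omega,x)\subset C$ in addition to $\supset C$), and recover the entropy lower bound via the same Frostman-type argument. You correctly identify both the key new difficulty (the reverse inclusion) and the role of the chaining conditions, so the plan matches the paper's proof.
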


\begin{proof}
	Fix any $\delta>0$, we will construct a subset $M$ which is similar to Lemma \ref{l:Moren}.
	For each $j\in\N$, we choose $\mu_j\in\mathcal{E}_\nu(\Gamma_A)$ such that $\int f d\mu_j=\alpha'_j$ and
	$$h_{\mu_j}>\sup\{h_\mu:\mu\in\mathcal{E}_\nu(\Gamma_A)\text{ and }\int f d\mu=\alpha'_j\}-\delta.$$
	By Rohlin's Disintegration Theorem, denote the disintegration of $\mu_j$ by $$\mu_j=\int\mu_j^{\omega,\xi}d\nu(\omega).$$
	By ergodic theorem, there exists $G_j\subset \Gamma_A$ such that $\mu_j(G_j)=1$ and
	$$\lim_{n\To\infty}\frac1nS_0^nf(\omega,x)=\alpha'_j\text{ for each }(\omega,x)\in G_j.$$
	Therefore, let $$G'_j=\{\omega\in\Omega:\mu_j^{\omega,\xi}(\Delta^\omega_{\mathrm{equ}}(\{\alpha'_j\}))=1\},$$
	and then we have $\nu(G'_j)=1$. Let $G^*=\bigcap_{j\in\N}G'_j$. Then we have $\nu(G^*)=1$ and for any $j\in\N$ and $\omega\in G^*$, we have $\mu_j^{\omega,\xi}(\Delta^\omega_{\mathrm{equ}}(\{\alpha'_j\}))=1$.
	On the other hand, there exists $G^{**}\subset\Omega$ such that $\nu(G^{**})=1$ and for any $\omega\in G^{**}$ and $j\in\N$,
	$$\lim_{n\To\infty}-\frac1n\log\mu_j^{\omega,\xi}([x|_{[0,n)}])=h_{\mu_j}-h_\nu\text{ for $\mu_j^{\omega,\xi}$-a.e. $x\in\Sigma_A$}.$$
	
	Let $G=G^*\cap G^{**}$.
	Now we fix $\omega\in G$.
	For each  $j\in\N$, there exists $M_j\subset\Sigma_A$ such that $\mu_j^{\omega,\xi}(M_j)>1-\delta$, and for each $\epsilon'_j$ there exists $N_j$ such that for any $x\in M_j$ and $n>N_j$,
	$$\left|\frac1nS_0^nf(\omega,x)-\alpha'_j\right|<\epsilon'_j,$$
	and
	$$\left|-\frac1n\log\mu_j^{\omega,\xi}([x|_{[0,n)}])-(h_{\mu_j}-h_\nu)\right|<\epsilon'_j.$$
	
	Next, similar to the argument in Lemma \ref{l:Moren}, for each $j\in\N$, we choose a sequence $\{n_i^j\}_{i\ge 1}$ such that for any $i\ge 1$ and $n>n_i^j$, we have
	$$\log\#W_{n_i^j}^n(M_j)>(n-n_i^j)(h_{\mu_j}-h_\nu-\delta).$$

	Now we define a sequence $\{T_j\}_{j\in\N}$ satisfying the following properties:
	\begin{itemize}
		\item $T_0=0$, $T_1>N_1$;
		\item $T_j>N_{j+1}$ and $T_j\in\{n_i^{j+1}\}_{i\in\N}$;
		\item $\lim_{j\To\infty}\frac{\sum_{i=1}^{j}T_i}{T_{j+1}}=0$;
		\item $\log\#W_0^n(M_j)\ge n(h_{\mu_j}-h_\nu-\delta)$ for all $n>T_j$;
		\item $\lim_{j\To\infty}T_j\epsilon'_j=\infty$.
	\end{itemize}
	For $j\ge 2$, define $1<r_j\le 2$ and $m(j)\in\N$ such that
	$$r_j^{m(j)}=\frac{T_j}{T_{j-1}},\,\lim_{j\To\infty}r_j=1,\,\lim_{j\To\infty}\frac{r_j-1}{\epsilon'_j}=\infty.$$
	For $j\ge 2$, let $t_l^j=\lfloor (r_j)^lT_{j-1}\rfloor$ for $l=0,1,\dots,m(j)$. Define a set
	$$
	\begin{aligned}
	M=\{x\in\Sigma:&x|_{[0,T_1)}\in W_0^{T_1}(M_1),\\ &x|_{[t_l^j,t_{l+1}^j)}\in W_{t_l^j}^{t_{l+1}^j}(M_j)\text{ for }j\ge 2\text{ and }0\le l\le m(j)-1\}.
	\end{aligned}
	$$

	Now we will show that the set $M$ satisfies that
	\begin{itemize}
		\item[(1)] for any $x\in M$, $A(\omega,x)=C$;
		\item[(2)] $h^B(M)\ge \inf_{\alpha\in C}\sup\{h_\mu:\mu\in\mathcal{E}_\nu(\Gamma_A)\text{ and }\int f d\mu=\alpha\}-h_\nu-\delta$.
	\end{itemize}

	To prove (1), we will show that for large enough $n$, let $k,l\in\N$ with $t^k_l\le n<t^k_{l+1}$, and we have
	$$\left|\frac1nS_0^nf(\omega,x)-\alpha'_k\right|=o(1).$$
	
	Similar to the proof of Lemma \ref{l:Moren},
	$$
	\begin{aligned}
	&\left|S_0^nf(\omega,x)-n\alpha'_k\right|\\
	\le&\sum_{i=1}^{T_1}\mathrm{var}_i f
	+\sum_{j=2}^{k-1}\sum_{l'=0}^{m(j)-1}\sum_{i=1}^{t_{l'+1}^j-t_{l'}^j}\mathrm{var}_i f
	+\sum_{l'=0}^{l-1}\sum_{i=1}^{t_{l'+1}^k-t_{l'}^k}\mathrm{var}_i f+\sum_{i=t_{l+1}^k-t_l^k}^{t_{l+1}^k-n}\mathrm{var}_i f
	\\&+T_1\epsilon'_1+\sum_{j=2}^{k-1}\sum_{l=0}^{m(j)-1}(t_{l+1}^j+t_l^j)\epsilon'_j
	+\sum_{l'=0}^{l-1}(t_{l'+1}^k+t_{l'}^k)\epsilon'_k+(n+t_l^k)\epsilon'_k
	\\&+T_1\left|\alpha_1-\alpha'_k\right|+\sum_{j=2}^{k-1}(T_j-T_{j-1})\left|\alpha'_j-\alpha'_k\right|
	\\\le&\sum_{i=1}^{T_1}\mathrm{var}_i f
	+\sum_{j=2}^{k-1}\sum_{l'=0}^{m(j)-1}\sum_{i=1}^{t_{l'+1}^j-t_{l'}^j}\mathrm{var}_i f
	+\sum_{l'=0}^{l-1}\sum_{i=1}^{t_{l'+1}^k-t_{l'}^k}\mathrm{var}_i f+\sum_{i=t_{l+1}^k-t_l^k}^{t_{l+1}^k-n}\mathrm{var}_i f
	\\&+T_1\epsilon'_1+\sum_{j=2}^{k-1}\sum_{l=0}^{m(j)-1}(t_{l+1}^j+t_l^j)\epsilon'_j
	+\sum_{l'=0}^{l-1}(t_{l'+1}^k+t_{l'}^k)\epsilon'_k+(n+t_l^k)\epsilon'_k\\
	&+2T_{k-2}\sup_{\alpha\in\mathcal{P}_A}|\alpha|+(T_{k-1}-T_{k-2})\epsilon'_k.
	\end{aligned}
	$$
	As same as the proof in Lemma \ref{l:Moren}, we have $A(\omega,  x)\supset C$. And by the arbitrariness of $n$, we have $A(\omega, x)\subset C$, which means that $A(\omega, x)=C$.
	
	To prove (2), it is similar to the proof of Lemma \ref{l:Moren} that we can construct a subset $\widetilde{M}\subset \Delta^\omega_{\mathrm{equ}}(C)$ with
	$$h^B(\widetilde{M})\ge\inf_{\alpha\in C}\sup\{h_\mu:\mu\in\mathcal{E}_\nu(\Gamma_A)\text{ and }\int f d\mu=\alpha\}-h_\nu-\delta.$$
	By the arbitrariness of $\delta$, it ends the proof.
\end{proof}

Sum up with Section \ref{s:3},Section \ref{s:4} and Section \ref{s:5}, we can prove Theorem \ref{t:1}.

\begin{proof}[Proof of Theorem \ref{t:1}]
	We prove it for each part.
	\begin{itemize}
		\item[(1):] Notice that  $$h^B(\Delta^\omega_{\mathrm{cap}}(C))\ge h^B(\Delta^\omega_{\mathrm{sub}}(C))\ge \sup_{\alpha\in C}h^B(\Delta^\omega_{\mathrm{equ}}(\{\alpha\})).$$
		Then by Theorem \ref{t:fo} and Lemma \ref{l:1upperbound}, it ends the proof.
		
		\item[(2):] By Theorem \ref{t:fo}, Lemma \ref{l:2upperbound} and Lemma \ref{l:Moren}, it ends the proof.
		
		\item[(3):] By (2), Lemma \ref{l:Moren2} and the fact that $$h^B(\Delta^\omega_{\mathrm{equ}}(C))\le h^B(\Delta^\omega_{\mathrm{sup}}(C)),$$ it ends the proof.
	\end{itemize}
\end{proof}

\section{Packing entropy of $\Delta^\omega_{\mathrm{sub}}(C)$ and $\Delta^\omega_{\mathrm{equ}}(C)$}\label{s:6}
In this section, we fix a continuous function $f:\Gamma_A\To\R^d$ with bounded variation and a quasi-Bernoulli $\sigma$-invariant ergodic measure $\nu$ on $\Omega$. To calculate the packing entropy of $\Delta^\omega_{\mathrm{sub}}(C)$ and $\Delta^\omega_{\mathrm{equ}}(C)$, we show the upper bound of $h^P(\Delta^\omega_{\mathrm{sub}}(C))$ for any $C\subset \mathcal{P}^o_A$ and the lower bound of $h^P(\Delta^\omega_{\mathrm{equ}}(C))$ for any compact and connected subset $C\subset \mathcal{P}_A$.

To estimate the upper bound of $h^P(\Delta^\omega_{\mathrm{sub}}(C))$, we need some notations.
For $C\subset\mathcal{P}_A$, $\omega\in\Omega$, $\delta>0$ and $n\in\N$, let
$$G_\omega(C,n,\delta)=\{x\in\Sigma_A:\frac1nS_0^nf(\omega,x)\in B(C,\delta)\},$$
$$G_\omega(C,\delta)=\limsup_{n\To\infty}\frac1n\log \#W_0^n(G_\omega(C,n,\delta))$$
and
$$G_\omega(C)=\lim_{\delta\To0}G_\omega(C,\delta),$$
recall that $W_0^n(M)=\{x|_{[0,n)}\in\Sigma_{A,n}:x\in M\}$. Since $G_\omega(C,\delta)$ is decreasing as $\delta$ decreases, then the limit $G_\omega(C)$ exists and equals to $\inf_{\delta>0}G_\omega(C,\delta)$.

\begin{lemma}\label{l:Packing upperbound}
	For $C\subset \mathcal{P}_A^o$ and $\nu$-a.e. $\omega\in\Omega$, we have
	$$h^P(\Delta^\omega_{\mathrm{sub}}(C))\le\sup_{\alpha\in C}\inf_{p\in\R^d}P_\nu(f_p^\alpha).$$
\end{lemma}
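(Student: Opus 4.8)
The plan is to bound $h^P(\Delta^\omega_{\mathrm{sub}}(C))$ in two stages: a purely combinatorial reduction showing $h^P(\Delta^\omega_{\mathrm{sub}}(C))\le G_\omega(C)$ for every $\omega$, and then an estimate $G_\omega(C)\le\sup_{\alpha\in C}\inf_{p}P_\nu(f^\alpha_p)$ for $\nu$-a.e.\ $\omega$ that mirrors the covering computation already carried out in Lemma \ref{l:1upperbound}. I will use the inequality $h^P(Z)\le h^{UC}(Z)$ recorded in Section \ref{s:2}, the identification $h^{UC}(Z)=\limsup_{n\To\infty}\frac1n\log\#W_0^n(Z)$ valid on the symbolic space (distinct length-$n$ cylinders become $(n,\epsilon)$-separated for $\epsilon<1$, and shifting the word length by a bounded amount leaves the exponential rate unchanged), and the countable stability $h^P(\bigcup_i Z_i)=\sup_i h^P(Z_i)$, which is immediate from the definition of $\mathcal{P}^s$ as an infimum over countable covers.

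For the first stage, fix $\delta>0$ and observe that if $x\in\Delta^\omega_{\mathrm{sub}}(C)$ then $A(\omega,x)\subset C\subset B(C,\delta)$; since $A(\omega,x)$ is the compact limit set of the bounded sequence $\{\frac1nS_0^nf(\omega,x)\}$, the sequence eventually enters the open set $B(C,\delta)$. Writing $E_{N,\delta}=\{x\in\Sigma_A:\frac1nS_0^nf(\omega,x)\in B(C,\delta)\text{ for all }n\ge N\}$, we therefore have $\Delta^\omega_{\mathrm{sub}}(C)\subset\bigcup_{N\in\N}E_{N,\delta}$. For $n\ge N$ we have $E_{N,\delta}\subset G_\omega(C,n,\delta)$, so $W_0^n(E_{N,\delta})\subset W_0^n(G_\omega(C,n,\delta))$ and hence $h^P(E_{N,\delta})\le h^{UC}(E_{N,\delta})\le G_\omega(C,\delta)$. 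Countable stability then gives $h^P(\Delta^\omega_{\mathrm{sub}}(C))\le\sup_N h^P(E_{N,\delta})\le G_\omega(C,\delta)$ for every $\delta>0$, and letting $\delta\To0$ yields $h^P(\Delta^\omega_{\mathrm{sub}}(C))\le G_\omega(C)$.

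For the second stage I first treat sets $C'$ with $B(C',2\eta)\subset\mathcal{P}_A^o$ for some $\eta>0$. Covering $C'$ by finitely many balls $B(\alpha_k,1/M)$ with centres $\alpha_k\in C'$ and choosing, via Lemma \ref{l:p's_bound}, vectors $p_k$ with $P_\nu(f^{\alpha_k}_{p_k})<s_0$ and $|p_k|\le s_0/\eta$ for any fixed $s_0>\sup_{\alpha\in C'}\inf_pP_\nu(f^\alpha_p)$, I bound $\#W_0^n(G_\omega(C',n,\delta))$ exactly as the cardinalities $\#L_{n,k}$ are bounded in Lemma \ref{l:1upperbound}: on the $\nu$-full set where $P(f^{\alpha_k}_{p_k},\omega)=P_\nu(f^{\alpha_k}_{p_k})$ and using uniform continuity of $f(\omega,\cdot)$, each cylinder with average near $\alpha_k$ satisfies $1\le e^{|p_k|n(2/M+\delta)}\sup_{y\in[W]}e^{\langle p_k,S_0^nf(\omega,y)-n\alpha_k\rangle}$, whence $\#W_0^n(G_\omega(C',n,\delta))\le k_M\,e^{s_0 n}e^{(s_0/\eta)n(2/M+\delta)}$ for large $n$. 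Taking $\frac1n\log$ and $\limsup$, then $\delta\To0$ and $M\To\infty$, and finally $s_0\downarrow\sup_{\alpha\in C'}\inf_pP_\nu(f^\alpha_p)$ gives $G_\omega(C')\le\sup_{\alpha\in C'}\inf_pP_\nu(f^\alpha_p)$; the exceptional null set is removed uniformly by restricting $M,s_0$ and the centres to countable families, exactly as in Lemma \ref{l:1upperbound}.

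Finally I pass to a general $C\subset\mathcal{P}_A^o$ by exhaustion. Put $C_m=\{\alpha\in C:\mathrm{dist}(\alpha,\R^d\setminus\mathcal{P}_A^o)>1/m\}$, so that $B(C_m,2\eta_m)\subset\mathcal{P}_A^o$ with $\eta_m=1/(4m)$. Whenever $x\in\Delta^\omega_{\mathrm{sub}}(C)$ the set $A(\omega,x)$ is compact and contained in the open set $\mathcal{P}_A^o$, hence lies at positive distance from its complement, so $A(\omega,x)\subset C_m$ for $m$ large; this yields the decomposition $\Delta^\omega_{\mathrm{sub}}(C)=\bigcup_m\Delta^\omega_{\mathrm{sub}}(C_m)$. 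Combining the two stages with countable stability,
\[
h^P(\Delta^\omega_{\mathrm{sub}}(C))=\sup_m h^P(\Delta^\omega_{\mathrm{sub}}(C_m))\le\sup_m G_\omega(C_m)\le\sup_m\sup_{\alpha\in C_m}\inf_pP_\nu(f^\alpha_p)\le\sup_{\alpha\in C}\inf_pP_\nu(f^\alpha_p),
\]
as required. The main obstacle is the second stage near $\partial\mathcal{P}_A^o$: the correction factor $e^{|p_k|n(2/M+\delta)}$ is controlled only when $|p_k|$ is bounded, which fails if the covering centres approach the boundary; the exhaustion by the $C_m$, together with the (non-obvious but true) fact that $\Delta^\omega_{\mathrm{sub}}$ splits along it, is precisely what circumvents this, while making the single $\nu$-null set independent of all parameters is the other point requiring care.
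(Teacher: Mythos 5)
Your proposal is correct and follows essentially the same route as the paper's proof: the same two-stage reduction $h^P(\Delta^\omega_{\mathrm{sub}}(C))\le G_\omega(C)$ via $h^{UC}$ and countable stability, the same covering of $C$ by finitely many balls with $|p_k|$ controlled by Lemma \ref{l:p's_bound}, the same pressure estimate on the cylinder counts, and the same exhaustion $C=\bigcup_m C_m$ for general $C$. The only difference is cosmetic: you bound $\#W_0^n(G_\omega(C,n,\delta))$ by summing over all $k$ at once, whereas the paper first reduces $G_\omega(C)$ to $G_\omega(\{\alpha_{k^*}\},2\eta)$ for a single maximizing centre before invoking the pressure bound.
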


\begin{proof}
	First, we assume that $\mathrm{dist}(C,\R^d\setminus\mathcal{P}_A^o)>0$.
	The proof is divided into two parts:
	\begin{itemize}
		\item[(1)] for any $\omega\in\Omega$, $h^P(\Delta^\omega_{\mathrm{sub}}(C))\le G_\omega(C)$;
		\item[(2)] for $\nu$-a.e. $\omega\in\Omega$, $G_\omega(C)\le\sup_{\alpha\in C}\inf_{p\in\R^d}P(f_p^\alpha,\omega).$
	\end{itemize}
	Part 1: To prove (1), we note that for any $\delta>0$,
	$$\Delta^\omega_{\mathrm{sub}}(C)\subset \bigcup_{N\in\N}\bigcap_{n\ge N}G_\omega(C,n,\delta).$$
	Thus we have
	$$h^P(\Delta^\omega_{\mathrm{sub}}(C))\le\sup_{N\in\N}h^P(\bigcap_{n\ge N}G_\omega(C,n,\delta))\le\sup_{N\in\N}h^{UC}(\bigcap_{n\ge N}G_\omega(C,n,\delta)).$$
	For any $N\in\N$, we have
	$$h^{UC}(\bigcap_{n\ge N}G_\omega(C,n,\delta))\le\limsup_{n\To\infty}\frac1n\log\#W_0^n((G_\omega(C,n,\delta)))=G_\omega(C,\delta).$$
	Thus for any $\delta>0$,
	$$h^P(\Delta^\omega_{\mathrm{sub}}(C))\le G_\omega(C,\delta).$$
	And let $\delta$ tends to $0$, we prove (1).
	
	Part 2: To prove (2), let $C'=\{\alpha_1,\alpha_2,\dots\}\subset C$ be a countable dense subset of $C$.
	Fix any $\eta>0$. Then there exists $k_\eta\in\N$ such that $\bigcup_{k=1}^{k_\eta}B(\alpha_k,\eta)\supset C$.
	Let $s=\sup_{\alpha\in C}\inf_{p\in\R^d}P_\nu(f^\alpha_p)+\eta$.
	For each $1\le k\le k_\eta$, there is $p_k\in\R^d$ with $P_\nu(f^{\alpha_k}_{p_k})<s$. For simplicity, let $f_k:=f^{\alpha_k}_{p_k}$.
	Since $P(f_k,\omega)=P_\nu(f_k)$ for $\nu$-a.e. $\omega\in\Omega$, then there exists $\Omega_0\subset\Omega$ with $\nu(\Omega_0)=1$ such that $P(f_k,\omega)=P_\nu(f_k)$ for any $\omega\in\Omega_0$ and $1\le k\le k_\eta$.
	
	Now fix $\omega\in\Omega_0$. Then for any $n\in\N$ and $\delta>0$,
	$$G_\omega(C,n,\delta)\subset \bigcup_{k=1}^{k_\eta}G_\omega(\{\alpha_k\},n,\delta+\eta),$$
	that is,
	$$\#W_0^n(G_\omega(C,n,\delta))\le \sum_{k=1}^{k_\eta}\#W_0^n(G_\omega(\{\alpha_k\},n,\delta+\eta)).$$
	Then there exists a sequence $\{n_i\}$ and $1\le k(\delta,\eta)\le k_\eta$ such that
	$$G_\omega(C,\delta)=\lim_{i\To\infty}\frac1{n_i}\log\#W_0^{n_i}(G_\omega(C,n_i,\delta))$$
	and for each $i\in\N$,
	$$\#W_0^{n_i}(G_\omega(\{\alpha_{k(\delta,\eta)}\},n_i,\delta+\eta))=\max_{1\le k\le k_\eta}\#W_0^{n_i}(G_\omega(\{\alpha_k\},n_i,\delta+\eta)).$$
	For $\delta<\eta$, we have
	$$
	\begin{aligned}
	G_\omega(C,\delta)
	\le&\lim_{i\To\infty}\frac1{n_i}\log(k_\eta\cdot\#W_0^{n_i}(G_\omega(\{\alpha_{k(\delta,\eta)}\},n_i,\delta+\eta)))\\
	\le&\lim_{i\To\infty}\frac1{n_i}\log\#W_0^{n_i}(G_\omega(\{\alpha_{k(\delta,\eta)}\},n_i,2\eta))\\
	\le&G_\omega(\{\alpha_{k(\delta,\eta)}\},2\eta).
	\end{aligned}
	$$
	Therefore, there exists $1\le k^*\le k_\eta$ such that
	$G_\omega(C)\le G_\omega(\{\alpha_{k^*}\},2\eta)$.
	
	Since $s>P_\nu(f_{k^*})$, then by Lemma \ref{l:p's_bound}, we have
	$$|p_{k^*}|<\frac{2P_\nu(f_{k^*})}{\mathrm{dist}(C,\R^d\setminus\mathcal{P}_A^o)}<\frac{2s}{\mathrm{dist}(C,\R^d\setminus\mathcal{P}_A^o)}.$$
	Thus
	$$
	\begin{aligned}
	s>&P_\nu(f_{k^*})=P(f_{k^*},\omega)\\
	=&\limsup_{n\To\infty}\frac1n\log\sum_{W\in\Sigma_{A,n}}\sup_{x\in [W]}e^{S_0^nf_{k^*}(\omega,x)}\\
	\ge&\limsup_{n\To\infty}\frac1n\log\sum_{W\in W_0^n(G_\omega(\{\alpha_{k^*}\},n,2\eta))}\sup_{x\in [W]}e^{S_0^nf_{k^*}(\omega,x)}
	\end{aligned}
	$$
	For each $W\in W_0^n(G_\omega(\{\alpha_{k^*}\},n,2\eta))$, we choose some $x_W\in G_\omega(\{\alpha_{k^*}\},n,2\eta)$ with $(x_W)|_{[0,n)}=W$.
	Then
	$$
	\begin{aligned}
	s>&\limsup_{n\To\infty}\frac1n\log\sum_{W\in W_0^n(G_\omega(\{\alpha_{k^*}\},n,2\eta))}\sup_{x\in [W]}e^{S_0^nf_{k^*}(\omega,x)}\\
	\ge&\limsup_{n\To\infty}\frac1n\log\sum_{W\in W_0^n(G_\omega(\{\alpha_{k^*}\},n,2\eta))}e^{\langle p_{k^*},S_0^nf(\omega,x_W)-n\alpha_{k^*}\rangle }\\
	\ge&\limsup_{n\To\infty}\frac1n\log\sum_{W\in W_0^n(G_\omega(\{\alpha_{k^*}\},n,2\eta))}e^{-|p_{k^*}|\cdot 2n\eta}\\
	\ge&G_\omega(\{\alpha_{k^*}\},2\eta)-\frac{4s\eta}{\mathrm{dist}(C,\R^d\setminus\mathcal{P}_A^o)}\\
	\ge&G_\omega(C)-\frac{4s\eta}{\mathrm{dist}(C,\R^d\setminus\mathcal{P}_A^o)}.
	\end{aligned}
	$$
	Let $\eta\To0$, we prove (2). Although the set $\Omega_0$ may depend on $\eta$, we can choose $\Omega'_0$ does not depend on $\eta$ by choosing $\eta\in\mathbb{Q}$ and the intersection of the corresponding $\Omega_0$.
	
	Now, for general $C\subset \mathcal{P}_A^o$, let $C_n=\{
	\alpha\in C:\mathrm{dist}(\alpha,\R^d\setminus\mathcal{P}_A^o)>\frac1n\}$. Then we note that $C=\bigcup_{n\ge1}C_n$. Since $A(\omega,x)$ is compact,
	$$\Delta^\omega_{\mathrm{sub}}(C)=\bigcup_{n\ge1}\Delta^\omega_{\mathrm{sub}}(C_n).$$
	Thus
	$$h^P(\Delta^\omega_{\mathrm{sub}}(C))\le\sup_{n\ge 1}\sup_{\alpha\in C_n}\inf_{p\in\R^d}P_\nu(f^\alpha_p)=\sup_{\alpha\in C}\inf_{p\in\R^d}P_\nu(f^\alpha_p).$$
\end{proof}

Next, to estimate the lower bound of $h^P(\Delta^\omega_{\mathrm{equ}}(C))$, we fix some compact and connected subset $C\subset\mathcal{P}_A$.
Similar to estimate the lower bound of $h^B(\Delta^\omega_{\mathrm{equ}}(C))$, We fix an arbitrary decreasing sequence $\{\epsilon_L\}_{L\in\N}$ with $\lim_{L\To\infty}\epsilon_L=0$. Since $C$ is compact and connected, we can find an index set $I=\{(L,k):L\in\N\text{ and } 1\le k \le J(L)\}$ and a sequence $\{\alpha_{L,k}\}_{(L,k)\in I}\subset C$ such that
\begin{itemize}
	\item[(i)] $|\alpha_{L,k}-\alpha_{L,k+1}|<\epsilon_L$ for each $L\in\N$ and $1\le k\le J(L)-1$;
	\item[(ii)] $|\alpha_{L,J(L)}-\alpha_{L+1,1}|<\epsilon_L$ for each $L\in\N$;
	\item[(iii)] $\bigcup_{k=1}^{J(L)}B(\alpha_{L,k},\epsilon_L)\supset C$;
	\item[(iv)] $\sup\{h_\mu:\mu\in\mathcal{E}_\nu(\Gamma_A),\int f d\mu=\alpha_{L,J(L)}\}\ge\sup\{h_\mu:\mu\in\mathcal{E}_\nu(\Gamma_A),\int f d\mu\in C\}-\epsilon_L$.
\end{itemize}
Now we define a map $\lambda:I\To\{1,2,\dots\}$ by $\lambda(L,k)=k+\sum_{i=1}^{L-1}J(i)$.
It is easy to see that $\lambda$ is a bijection.
Define $\{\epsilon'_j\}_{j\in\N}$ and $\{\alpha'_j\}_{j\in\N}$ by $\epsilon'_j=\epsilon_L$ and $\alpha'_j=\alpha_{L,k}$ where $j=\lambda(L,k)$. Then we have $|\alpha'_j-\alpha'_{j+1}|<\epsilon'_j$ for any $j\in\N$.

\begin{lemma}\label{l:Packing lowerbound}
	If $C\subset\mathcal{P}_A$ is compact and connected, then for $\nu$-a.e. $\omega\in\Omega$,
	$$h^P(\Delta^\omega_{\mathrm{equ}}(C))\ge\sup\{h_\mu:\mu\in\mathcal{E}_\nu(\Gamma_A),\int f d\mu\in C\}-h_\nu.$$
\end{lemma}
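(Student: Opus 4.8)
The plan is to reuse, essentially verbatim, the set $M$ built in the proof of Lemma \ref{l:Moren2}. The sequences $\{\alpha'_j\}$ and $\{\epsilon'_j\}$ fixed just before the statement already satisfy $|\alpha'_j-\alpha'_{j+1}|<\epsilon'_j$, so the same concatenation scheme over the geometric scales $t_l^j=\lfloor r_j^l T_{j-1}\rfloor$ produces a set $M\subset\Sigma$ with $A(\omega,x)=C$ for every $x\in M$, by exactly the argument proving part $(1)$ of Lemma \ref{l:Moren2} (the connectedness of $C$ supplying the inclusion $A(\omega,x)\subset C$). After applying the coding map $\pi$ we obtain $\widetilde M=\pi(M)\subset\Delta^\omega_{\mathrm{equ}}(C)$. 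The genuinely new ingredient is property $(iv)$: it forces the measure $\mu_{\lambda(L,J(L))}$ used at the \emph{end} of each block $L$ to have entropy within $\epsilon_L+\delta$ of $\sup\{h_\mu:\mu\in\mathcal{E}_\nu(\Gamma_A),\int f\,d\mu\in C\}$. The point is that packing entropy, via the distribution principle (Lemma \ref{l:DP}), only demands a measure estimate along a \emph{single} subsequence of scales, so it suffices to realize the full supremum entropy at the block boundaries $T_{\lambda(L,J(L))}$, rather than at every scale as Bowen entropy would require.

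Next I would take the measure $\hat\mu$ on $M$ defined exactly as in Lemma \ref{l:Moren}, i.e.\ uniform over the admissible words of $M$ at each completed scale. At a block boundary $n=T_j=t_{m(j)}^j$ this gives $\hat\mu([x|_{[0,T_j)}])=1/\#W_0^{T_j}(M)$ for every $x\in M$. The word-count estimate from the proof of Lemma \ref{l:Moren} yields
$$\#W_0^{T_j}(M)\ge\exp\left(\sum_{i=1}^{j}(T_i-T_{i-1})(h_{\mu_i}-h_\nu-\delta)\right)\ge\exp\left(T_j(h_{\mu_j}-h_\nu-2\delta)\right),$$
where the second inequality uses the gap condition $\lim_{j\To\infty}\frac{\sum_{i\le j}T_i}{T_{j+1}}=0$, which makes the final block $[T_{j-1},T_j)$ (governed by $\mu_j$) dominate the telescoping sum. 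Hence $\hat\mu([x|_{[0,T_j)}])\le\exp(-T_j(h_{\mu_j}-h_\nu-2\delta))$ for large $j$.

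I would then specialize to the subsequence $n_i:=T_{j_i}$ with $j_i:=\lambda(L_i,J(L_i))$ and $L_i\To\infty$. By property $(iv)$ together with the choice $h_{\mu_{j_i}}>\sup\{h_\mu:\int f\,d\mu=\alpha'_{j_i}\}-\delta$, we get $h_{\mu_{j_i}}\ge\sup\{h_\mu:\int f\,d\mu\in C\}-\epsilon_{L_i}-\delta$. Writing $s_C:=\sup\{h_\mu:\mu\in\mathcal{E}_\nu(\Gamma_A),\int f\,d\mu\in C\}-h_\nu$, the bound above becomes $\hat\mu([x|_{[0,n_i)}])\le\exp(-n_i(s_C-\epsilon_{L_i}-3\delta))$ for all $x\in M$. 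Since $\epsilon_{L_i}\To0$, for fixed $\delta$ we may discard finitely many $i$ and apply Lemma \ref{l:DP} with $s=s_C-4\delta$ and constant $C_0=1$, obtaining $h^P(M)\ge s_C-4\delta$. Transferring through $\pi$ (which alters only a sparse set of coordinates and therefore preserves packing entropy, by the same reasoning that gives $h^B(X)=h^B(\pi X)$) yields $h^P(\Delta^\omega_{\mathrm{equ}}(C))\ge h^P(\widetilde M)\ge s_C-4\delta$, and letting $\delta\To0$ finishes the proof.

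The main obstacle is the second step: one must verify that at the block boundaries the word count genuinely captures the full entropy $h_{\mu_j}$ of the last block, rather than an average degraded by the earlier, possibly lower-entropy, blocks. This is exactly where the geometric gap condition is essential, and it is also why property $(iv)$ must be imposed at the \emph{right} endpoint of each block so that the high-entropy measure sits at a boundary scale. A secondary technical point is that the exceptional null set must be chosen independently of $\delta$; as in Lemmas \ref{l:1upperbound} and \ref{l:Packing upperbound}, this is arranged by restricting $\delta$ to rationals and intersecting the corresponding full-measure sets, and by confirming that the transfer map $\pi$ leaves $h^P$ unchanged.
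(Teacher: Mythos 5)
Your proposal is correct and follows essentially the same route as the paper: the same Moran-type construction with property (iv) placing a near-supremal-entropy measure at the end of each block, the same word-count estimate in which the gap condition $\sum_{i\le j}T_i=o(T_{j+1})$ makes the final block dominate, and the same application of Lemma \ref{l:DP} along the subsequence of block-boundary scales $T_{\lambda(L,J(L))}$. The only divergence is that the paper rebuilds $M$ directly inside $\Sigma_A$ by shortening each block by $r$ and inserting connecting words (so no transfer map is needed), whereas you reuse the ambient construction and pass through $\pi$ --- which does work, but requires noting that the sparse-window argument bounding cylinder multiplicities by $K^{rj}=e^{o(q_j)}$ also preserves packing entropy, since the paper's cited lemma only records the Bowen-entropy case.
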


\begin{proof}
	Fix any $\delta>0$, we will construct a subset $M$ which is similar to Lemma \ref{l:Moren2}.
	For each $j\in\N$, we choose $\mu_j\in\mathcal{E}_\nu(\Gamma_A)$ such that $\int f d\mu_j=\alpha'_j$ and
	$$h_{\mu_j}>\sup\{h_\mu:\mu\in\mathcal{E}_\nu(\Gamma_A)\text{ and }\int f d\mu=\alpha'_j\}-\delta.$$
	By Rohlin's Disintegration Theorem, denote the disintegration of $\mu_j$ by $$\mu_j=\int\mu_j^{\omega,\xi}d\nu(\omega).$$
	By ergodic theorem, there exists $G_j\subset \Gamma_A$ such that $\mu_j(G_j)=1$ and
	$$\lim_{n\To\infty}\frac1nS_0^nf(\omega,x)=\alpha'_j\text{ for each }(\omega,x)\in G_j.$$
	Therefore, let $$G'_j=\{\omega\in\Omega:\mu_j^{\omega,\xi}(\Delta^\omega_{\mathrm{equ}}(\{\alpha'_j\}))=1\},$$
	and then we have $\nu(G'_j)=1$. Let $G^*=\bigcap_{j\in\N}G'_j$. Then we have $\nu(G^*)=1$ and for any $j\in\N$ and $\omega\in G^*$, we have $\mu_j^{\omega,\xi}(\Delta^\omega_{\mathrm{equ}}(\{\alpha'_j\}))=1$.
	On the other hand, there exists $G^{**}\subset\Omega$ such that $\nu(G^{**})=1$ and for any $\omega\in G^{**}$ and $j\in\N$,
	$$\lim_{n\To\infty}-\frac1n\log\mu_j^{\omega,\xi}([x|_{[0,n)}])=h_{\mu_j}-h_\nu\text{ for $\mu_j^{\omega,\xi}$-a.e. $x\in\Sigma_A$}.$$
	
	Let $G=G^*\cap G^{**}$.
	Now we fix $\omega\in G$.
	For each  $j\in\N$, there exists $M_j\subset\Sigma_A$ such that $\mu_j^{\omega,\xi}(M_j)>1-\delta$, and for each $\epsilon'_j$ there exists $N_j$ such that for any $x\in M_j$ and $n>N_j$,
	$$\left|\frac1nS_0^nf(\omega,x)-\alpha'_j\right|<\epsilon'_j,$$
	and
	$$\left|-\frac1n\log\mu_j^{\omega,\xi}([x|_{[0,n)}])-(h_{\mu_j}-h_\nu)\right|<\epsilon'_j.$$
	
	Next, similar to the argument in Lemma \ref{l:Moren2}, for each $j\in\N$, we choose a sequence $\{n_i^j\}_{i\ge 1}$ such that for any $i\ge 1$ and $n>n_i^j$, we have
	$$\log\#W_{n_i^j}^n(M_j)>(n-n_i^j)(h_{\mu_j}-h_\nu-\delta).$$
	
	Recall that there exists $r\in\N$ such that every entry of $A^r$ is strictly positive.
	Now we define a sequence $\{T_j\}_{j\in\N}$ satisfying the following properties:
	\begin{itemize}
		\item $T_0=0$, $T_1>N_1+r$;
		\item $T_j>N_{j+1}+r$ and $T_j\in\{n_i^{j+1}\}_{i\in\N}$;
		\item $\lim_{j\To\infty}\frac{\sum_{i=1}^{j}T_i}{T_{j+1}}=0$;
		\item $\log\#W_0^n(M_j)\ge n(h_{\mu_j}-h_\nu-\delta)$ for all $n>T_j$;
		\item $\lim_{j\To\infty}T_j\epsilon'_j=\infty$.
	\end{itemize}
	For $j\ge 2$, define $1<r_j\le 2$ and $m(j)\in\N$ such that
	$$r_j^{m(j)}=\frac{T_j}{T_{j-1}},\,\lim_{j\To\infty}r_j=1,\,\lim_{j\To\infty}\frac{r_j-1}{\epsilon'_j}=\infty.$$
	For $j\ge 2$, let $t_l^j=\lfloor (r_j)^lT_{j-1}\rfloor$ for $l=0,1,\dots,m(j)$. Define a set
	$$
	\begin{aligned}
	M=\{x\in\Sigma_A:&x|_{[0,T_1-r)}\in W_0^{T_1-r}(M_1),\\ &x|_{[t_l^j,t_{l+1}^j-r)}\in W_{t_l^j}^{t_{l+1}^j-r}(M_j)\text{ for }j\ge 2\text{ and }0\le l\le m(j)-1\}.
	\end{aligned}
	$$
	We note that for any choice $\{W_{j,l}\in W_{t_l^j}^{t_{l+1}^j}(M_j):j\ge 2,\,0\le l\le m(j)-1\}$, there is $x\in M$ such that $x|_{[t_l^j,t_{l+1}^j-r)}=W_{j,l}$ for any $j\ge 2,\,0\le l\le m(j)-1$.
	
	Now we will show that the set $M$ satisfies that
	\begin{itemize}
		\item[(1)] for any $x\in M$, $A(\omega,x)=C$;
		\item[(2)] $h^P(M)\ge\sup\{h_\mu:\mu\in\mathcal{E}_\nu(\Gamma_A)\text{ and }\int f d\mu\in C\}-h_\nu-2\delta$.
	\end{itemize}

	To prove (1), we will show that for large enough $n$, let $k,l\in\N$ with $t^k_l\le n<t^k_{l+1}$, and we have
	$$\left|\frac1nS_0^nf(\omega,x)-\alpha'_k\right|=o(1).$$
	
	Similar to the proof of Lemma \ref{l:Moren2},
	for each $j\ge 2$ and $0\le l<m(j)$,
	$$
	\left|S_{t_l^j}^{t_{l+1}^j}f(\sigma^{t_l^j}\omega,\sigma^{t_l^j}x)-(t_{l+1}^j-t_l^j)\alpha'_j\right|\\
	\le2r|f|+\sum_{i=1}^{t_{l+1}^j-t_l^j-r}\mathrm{var}_i f+(t_{l+1}^j+t_l^j)\epsilon'_j,
	$$
	where $|f|=\sup_{(\omega,x)\in\Gamma_A}|f(\omega,x)|$.
	Then
	$$
	\begin{aligned}
	&\left|S_0^nf(\omega,x)-n\alpha'_k\right|
	\\\le&\sum_{i=1}^{T_1-r}\mathrm{var}_i f
	+\sum_{j=2}^{k-1}\sum_{l'=0}^{m(j)-1}\sum_{i=1}^{t_{l'+1}^j-t_{l'}^j-r}\mathrm{var}_i f
	+\sum_{l'=0}^{l-1}\sum_{i=1}^{t_{l'+1}^k-t_{l'}^k-r}\mathrm{var}_i f+\sum_{i=t_{l+1}^k-t_l^k}^{t_{l+1}^k-n}\mathrm{var}_i f
	\\&+T_1\epsilon'_1+\sum_{j=2}^{k-1}\sum_{l=0}^{m(j)-1}(t_{l+1}^j+t_l^j)\epsilon'_j
	+\sum_{l'=0}^{l-1}(t_{l'+1}^k+t_{l'}^k)\epsilon'_k+(n+t_l^k)\epsilon'_k\\
	&+2r|f|\left(2+\sum_{j=2}^{k-1}m(j)+l\right)+2T_{k-2}\sup_{\alpha\in\mathcal{P}_A}|\alpha|+(T_{k-1}-T_{k-2})\epsilon'_k.
	\end{aligned}
	$$
	It is obvious that $\frac{1}{t^k_l}\left(2+\sum_{j=2}^{k-1}m(j)+l\right)=o(1)$.
	So we prove (1).
	
	To prove (2), for any $j\ge 2$, we have
	$$
	\#W_{T_{j-1}}^{T_j}(M)
	\ge\#W_{T_{j-1}}^{T_j-r}(M_j)
	\ge \exp((T_j-r-T_{j-1})(h_{\mu_j}-h_\nu-\delta)).
	$$
	For any large enough $L$ with $\epsilon_L<\delta$, let $n_L=\lambda(L,J(L))$. Then, we have $\alpha'_{n_L}=\alpha_{L,J(L)}$ and
	$$
	\begin{aligned}
	\#W_{0}^{T_{n_L}}(M)\ge& \#W_{T_{n_L-1}}^{T_{n_L}-r}(M_{n_L})\cdot\prod_{i=1}^{n_L-1}\#W_{T_{i-1}}^{T_i-r}(M_i)\\
	\ge&\exp\left((T_{n_L}-r-T_{n_L-1})(h_{\mu_{n_L}}-h_\nu-\delta)+\sum_{i=1}^{n_L-1}(T_i-r-T_{i-1})(h_{\mu_i}-h_\nu-\delta)\right)\\
	\ge&\exp\left((T_{n_L}-r-T_{n_L-1})(\sup\{h_\mu:\mu\in\mathcal{E}_\nu(\Gamma_A),\int fd\mu\in C\}-h_\nu-\epsilon_L-\delta)\right).
	\end{aligned}
	$$
	Next, we define a probability measure $\hat{\mu}$ as follows.
	For any $W\in\Sigma_n$, there exists unique $j\in\N$ and $0\le l<m(j)$ such that $t_l^j<n\le t_{l+1}^j$. Define
	$$\hat{\mu}([W]):=\frac{\#\{W'\in W_0^{t_{l+1}^j}(M):[W]\supset[W']\}}{\#W_0^{t_{l+1}^j}(M)}.$$
	Indeed, $\hat{\mu}$ is a well defined probability measure on $M$.
	So for any $x\in M$,
	$$
	\begin{aligned}
	&\liminf_{L\To\infty}-\frac1{T_{n_L}}\log\hat{\mu}([x|_{[0,T_{n_L})}])\\
	\ge&\liminf_{L\To\infty}\frac{\log\#W_0^{T_{n_L}}(M)}{T_{n_L}}\\
	\ge&\liminf_{L\To\infty}\frac{T_{n_L}-r-T_{n_L-1}}{T_{n_L}}\left(\sup\{h_\mu:\mu\in\mathcal{E}_\nu(\Gamma_A),\int fd\mu\in C\}-h_\nu-\epsilon_L-\delta\right)\\
	=&\sup\{h_\mu:\mu\in\mathcal{E}_\nu(\Gamma_A),\int fd\mu\in C\}-h_\nu-\delta.
	\end{aligned}
	$$
	Then by Lemma \ref{l:DP}, it proves (2).
	
	So we have $h^P(\Delta^\omega_{\mathrm{equ}}(C))\ge h^P(M)\ge\sup\{h_\mu:\mu\in\mathcal{E}_\nu(\Gamma_A),\int f d\mu\in C\}-h_\nu-\delta$. And by the arbitrariness of $\delta$, it ends the proof.
\end{proof}

\begin{remark}
	In Lemma \ref{l:Packing lowerbound}, we note that $C$ is not necessarily the subset of $\mathcal{P}_A^o$. Thus for the sets $\Delta^\omega_{\mathrm{cap}}(C)$ and $\Delta^\omega_{\mathrm{sup}}(C)$, we have
	$$h^P(\Delta^\omega_{\mathrm{cap}}(C))=h^P(\Delta^\omega_{\mathrm{sup}}(C))=\sup\{h_\mu:\mu\in\mathcal{E}_\nu(\Gamma_A)\}-h_\nu=h(\Sigma_A),$$
	since
	$$h^P(\Delta^\omega_{\mathrm{cap}}(C))\ge h^P(\Delta^\omega_{\mathrm{sup}}(C))\ge h^P(\Delta^\omega_{\mathrm{equ}}(\mathcal{P}_A)).$$
\end{remark}

\begin{proof}[Proof of Theorem \ref{t:4}]
	We prove it for each part.
	\begin{itemize}
		\item[(1):] For any $\alpha\in C\subset \mathcal{P}_A^o$, by Lemma \ref{l:Packing lowerbound}, we have
		$$h^P(\Delta^\omega_{\mathrm{equ}}(\{\alpha\}))\ge\sup\{h_\mu:\mu\in\mathcal{E}_\nu(\Gamma_A),\int fd\mu=\alpha\}-h_\nu-\delta.$$
		Since $h^P(\Delta^\omega_{\mathrm{sub}}(C))\ge\sup_{\alpha\in C} h^P(\Delta^\omega_{\mathrm{equ}}(\{\alpha\}))$,
		then by Theorem \ref{t:fo} and Lemma \ref{l:Packing upperbound}, it ends the proof.
		
		\item[(2):] It directly follows from Lemma \ref{l:Packing upperbound}, Lemma \ref{l:Packing lowerbound} and $h^P(\Delta^\omega_{\mathrm{equ}}(C))\le h^P(\Delta^\omega_{\mathrm{sub}}(C))$.
		
	\end{itemize}
\end{proof}

\section{Proof of Theorem \ref{t:ml}}\label{s:7}

In this section, we will prove Theorem \ref{t:ml} by 3 steps. First, we consider that $\Omega=\Sigma=\{1,\dots,K\}$ where $2\le K\in\N$ and $f:=\rho$ be the metric on $\Sigma$.
Then $f$ can be seen as a function on $\Gamma_A$.
By Theorem \ref{t:1} and Theorem \ref{t:4}, we will show that Theorem \ref{t:ml} holds for almost everywhere $\omega$ in an aperiodic and irreducible shift of finite type $\Sigma_A$, that is, Theorem \ref{t:app}.
Second, we construct a map between $\mathrm{ML}_\omega(\Sigma_A,\sigma)$ for different $\omega\in\Sigma_A$.
This map can transfer almost everywhere to everywhere, which can prove that Theorem \ref{t:ml} holds for each $\omega$ in an aperiodic and irreducible shift of finite type $\Sigma_A$, that is, Theorem \ref{t:app1.5}.
Finally, we prove Theorem \ref{t:ml} by a spectral decomposition theorem.

For a $\sigma$-invariant ergodic quasi-Bernoulli measure $\nu$ on $\Sigma$ with $\nu(\Sigma_A)=1$, we have
$$\mathcal{P}_A=\left\{\int \rho d\mu:\mu\in\mathcal{M}_\nu(\Gamma_A)\right\}=[0,\max\mathcal{P}_A].$$
Note that
$$\mathrm{ML}_\omega(\Sigma_A,\sigma)=\bigcup_{\delta>0}\Delta^\omega_{sup}([0,\delta])=\bigcup_{\delta>0}\Delta^\omega_{sup}((0,\delta)).$$
Then for $\nu$-a.e. $\omega\in\Sigma$,
$$
\begin{aligned}
h^B(\mathrm{ML}_\omega(\Sigma_A,\sigma))
=&h^B\left(\bigcup_{\delta>0}\Delta^\omega_{sup}((0,\delta))\right)\\
=&\sup_{\delta\in\mathbb{Q}}\inf_{\alpha\in(0,\delta)}\inf_{p\in\R}P_{\nu}(p(\rho-\alpha))\\
=&\liminf_{\alpha\To0}\inf_{p\in\R}P_{\nu}(p(\rho-\alpha))\\
=&\liminf_{\alpha\To0}h^B(\Delta^\omega_{equ}(\{\alpha\})).
\end{aligned}
$$
By \cite[Theorem 2.4]{BRS20}, it is shown that the map $\alpha\mapsto h^B(\Delta^\omega_{equ}(\{\alpha\}))$ is continuous.
So we have
$$h^B(\mathrm{ML}_\omega(\Sigma_A,\sigma))=h^B(\Delta^\omega_{equ}(\{0\})).$$
Let $g(\alpha)=\inf_{p\in\R}P_{\nu}(p(\rho-\alpha))=\inf_{p\in\R}(P_{\nu}(p\rho)-p\alpha)$ for $\alpha\in\mathcal{P}_A$.

\begin{theorem}\label{t:app}
Assume that $\Sigma_A$ is an aperiodic and irreducible shift of finite type. If $\nu$ is a $\sigma$-invariant ergodic quasi-Bernoulli measure on $\Sigma$ with $\nu(\Sigma_A)=1$, then for $\nu$-a.e. $\omega\in\Sigma_A$,
$$h^B(\mathrm{ML}_\omega(\Sigma_A,\sigma))=0\text{ and }h^P(\mathrm{ML}_\omega(\Sigma_A,\sigma))=h(\Sigma_A),$$
that is,
$$\dim_H(\mathrm{ML}_\omega(\Sigma_A,\sigma))=0\text{ and }\dim_P(\mathrm{ML}_\omega(\Sigma_A,\sigma))=\dim_H(\Sigma_A).$$
\end{theorem}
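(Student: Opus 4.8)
The plan is to build everything on the identity $\mathrm{ML}_\omega(\Sigma_A,\sigma)=\bigcup_{\delta>0}\Delta^\omega_{\mathrm{sup}}((0,\delta))$ recorded above, together with the countable stability of $h^B$ and the monotonicity of $h^P$, so that the problem reduces to understanding a single family $\Delta^\omega_{\mathrm{sup}}((0,\delta))$ and then letting $\delta\to 0$. The one delicate feature, which I expect to be the main obstacle, is that the relevant target value is $\alpha=0$, which lies on the boundary $\partial\mathcal{P}_A$ and is therefore not directly covered by Theorems \ref{t:1}, \ref{t:4}, or \ref{t:fo}; all the work concentrates in controlling the limit $\alpha\to 0^+$.

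For the Bowen entropy, first I would apply Theorem \ref{t:1}(2) to each $C=(0,\delta)\subset\mathcal{P}_A^o$ to obtain, for $\nu$-a.e. $\omega$,
$$h^B(\Delta^\omega_{\mathrm{sup}}((0,\delta)))=\inf_{\alpha\in(0,\delta)}g(\alpha),\qquad g(\alpha)=\inf_{p\in\R}P_\nu(p(\rho-\alpha)).$$
Intersecting these full-measure sets over $\delta\in\Q$ and using countable stability of $h^B$ gives $h^B(\mathrm{ML}_\omega)=\liminf_{\alpha\to0^+}g(\alpha)$. Since $g(\alpha)=h^B(\Delta^\omega_{\mathrm{equ}}(\{\alpha\}))\ge 0$ for $\alpha>0$ by Theorem \ref{t:fo}, it remains to prove $\liminf_{\alpha\to0^+}g(\alpha)=0$. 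For this I would use the variational identity $g(\alpha)=\sup\{h_\mu^\xi:\mu\in\mathcal{M}_\nu(\Gamma_A),\ \int\rho\,d\mu=\alpha\}$ (Theorem \ref{t:fo}, recalling $h_\mu^\xi=h_\mu-h_\nu$) and bound the fibre entropy information-theoretically.

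The crux is a Fano-type estimate. Since $\rho(\omega,x)\ge\mathbf 1[x_0\ne\omega_0]$, any $\mu$ with $\int\rho\,d\mu=\alpha$ satisfies $q_\mu:=\mu(x_0\ne\omega_0)\le\alpha$. Writing $h_\mu^\xi$ as the limit of $\tfrac1n$ times the conditional entropy of $(x_0,\dots,x_{n-1})$ given the whole base point $\omega$, and using subadditivity together with $\sigma$-invariance, one gets $h_\mu^\xi\le H(x_0\mid\omega)\le H(x_0\mid\omega_0)$, where $H(x_0\mid\omega_0)$ is the conditional Shannon entropy of the coordinate $x_0$ given $\omega_0$. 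Fano's inequality, with $\omega_0$ as the predictor of $x_0$, then yields
$$h_\mu^\xi\le H_b(q_\mu)+q_\mu\log(K-1)\le H_b(\alpha)+\alpha\log(K-1),$$
where $H_b(t)=-t\log t-(1-t)\log(1-t)$ and the last inequality holds for $\alpha\le\tfrac12$ since the right-hand side is increasing near $0$. Taking the supremum over admissible $\mu$ gives $g(\alpha)\le H_b(\alpha)+\alpha\log(K-1)\to 0$, whence $\liminf_{\alpha\to0^+}g(\alpha)=0$ and $h^B(\mathrm{ML}_\omega)=0$ for $\nu$-a.e. $\omega$. By the continuity result \cite[Theorem 2.4]{BRS20} this is exactly the statement $h^B(\Delta^\omega_{\mathrm{equ}}(\{0\}))=0$.

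For the packing entropy I would use a sandwich and avoid the boundary issue entirely. On one side, $\mathrm{ML}_\omega\subset\Sigma_A$ gives $h^P(\mathrm{ML}_\omega)\le h^P(\Sigma_A)=h(\Sigma_A)$. On the other side, because $\mathcal{P}_A=[0,\max\mathcal{P}_A]$ with $\max\mathcal{P}_A>0$, we have $\Delta^\omega_{\mathrm{equ}}(\mathcal{P}_A)\subset\mathrm{ML}_\omega$; applying Lemma \ref{l:Packing lowerbound} to the compact connected set $\mathcal{P}_A$ (as in the Remark following it), and using $h_\mu^\xi\le h(\Sigma_A)$ with equality attained by the product of $\nu$ with the Parry measure, gives, for $\nu$-a.e. $\omega$,
$$h^P(\mathrm{ML}_\omega)\ge h^P(\Delta^\omega_{\mathrm{equ}}(\mathcal{P}_A))\ge\sup\{h_\mu:\mu\in\mathcal{E}_\nu(\Gamma_A)\}-h_\nu=h(\Sigma_A).$$
Combining the two bounds yields $h^P(\mathrm{ML}_\omega)=h(\Sigma_A)$ for $\nu$-a.e. $\omega$, and the reformulation in terms of $\dim_H$ and $\dim_P$ follows from $\dim_H(Z)=h^B(Z)/\log K$ and its packing counterpart. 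Throughout, the genuinely new ingredient is the Fano estimate of the preceding paragraph, which is what makes the boundary value $\alpha=0$ accessible.
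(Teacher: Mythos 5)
Your proof is correct, and its skeleton (the identity $\mathrm{ML}_\omega(\Sigma_A,\sigma)=\bigcup_{\delta>0}\Delta^\omega_{\mathrm{sup}}((0,\delta))$, countable stability of $h^B$, Theorem \ref{t:1}(2), and for the packing part the inclusion $\Delta^\omega_{\mathrm{equ}}(\mathcal{P}_A)\subset\mathrm{ML}_\omega(\Sigma_A,\sigma)$ together with Lemma \ref{l:Packing lowerbound}) coincides with the paper's; the difference lies in the one delicate step, the boundary value $\alpha=0$. The paper stays on the pressure side: it invokes \cite[Lemma 5.5]{BRS20} (valid for all $\alpha\in\mathcal{P}_A$, not only interior points) to get $h^B(\Delta^\omega_{\mathrm{equ}}(\{0\}))\le g(0)=\inf_{p}P_\nu(p\rho)$, bounds the partition function combinatorially via $\sum_{W\in\Sigma_{A,n}}\sup_{x\in[W]}e^{pS_0^n\rho(\omega,x)}\le\sum_{i=0}^n\binom{n}{i}(K-1)^ie^{pi}=((K-1)e^p+1)^n$ so that $g(0)\le\inf_{p<0}\log((K-1)e^p+1)=0$, and uses the continuity of $\alpha\mapsto h^B(\Delta^\omega_{\mathrm{equ}}(\{\alpha\}))$ from \cite[Theorem 2.4]{BRS20} to identify the $\liminf$ with the value at $0$. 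You instead stay strictly in the interior and work on the entropy side, bounding $\sup\{h_\mu^\xi:\int\rho\,d\mu=\alpha\}$ through Fano's inequality via $\mu(x_0\neq\omega_0)\le\int\rho\,d\mu$. The two estimates are exact Legendre duals of each other, since $\inf_{p}\bigl(\log((K-1)e^p+1)-p\alpha\bigr)=H_b(\alpha)+\alpha\log(K-1)$, so you are proving the same inequality from the dual side. What your route buys is independence from the boundary-case upper bound and the continuity theorem of \cite{BRS20} (you only need Theorem \ref{t:fo} at interior points plus the Pinsker formula $h_\mu^\xi=h_\mu-h_\nu$); what the paper's route buys is the slightly stronger conclusion $h^B(\Delta^\omega_{\mathrm{equ}}(\{0\}))=0$ itself. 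Two details worth making explicit in a final write-up: the bound $h_\mu^\xi\le H(x_0\mid\omega)\le H(x_0\mid\omega_0)$ is immediate from the paper's integral definition of $h_\mu^\xi$ (no subadditivity argument is needed), and in the packing lower bound the product of $\nu$ with the Parry measure is ergodic because the Parry measure of an aperiodic irreducible shift of finite type is mixing, which is what lets you realize $\sup\{h_\mu:\mu\in\mathcal{E}_\nu(\Gamma_A)\}-h_\nu=h(\Sigma_A)$.
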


\begin{proof}
	By \cite[Lemma 5.5]{BRS20}, we have $h^B(\Delta^\omega_{equ}(\{\alpha\}))\le g(\alpha)$ for any $\alpha\in\mathcal{P}_A$ and $\nu$-a.e. $\omega\in\Sigma$.
	So to prove the first formula,  we only need to prove $g(0)\le 0$.
	
	First, fix any $p<0$. For any $\omega\in\Sigma$, $n\in\N$ and $0\le i\le n$, let
	$$A_i=\{W\in\Sigma_n:\#\{0\le j<n:W_j\neq\omega_j\}=i\}.$$
	Thus $\#A_i\le\binom{n}{i}(K-1)^i$.
	For $W\in A_i$ and $x\in[W]$, we have $S_0^n\rho(\omega,x)\ge i.$
	So for $W\in A_i$,
	$$\sup_{x\in[W]}\exp(pS_0^n\rho(\omega,x))\le e^{pi}.$$
	Then
	$$
	\begin{aligned}
	\log\sum_{W\in\Sigma_{A,n}}\sup_{x\in[W]}\exp(pS_0^n\rho(\omega,x))
	\le&\log\sum_{W\in\Sigma_n}\sup_{x\in[W]}\exp(pS_0^n\rho(\omega,x))\\
	\le&\log\sum_{i=0}^{n}\sum_{W\in A_i}\sup_{x\in[W]}\exp(pS_0^n\rho(\omega,x))\\
	\le&\log\sum_{i=0}^{n}\binom{n}{i}(K-1)^ie^{pi}\\
	=&n\log((K-1)e^p+1).\\
	\end{aligned}
	$$
	Thus, for any $p<0$,
	$$P_\nu(p\rho)=\int\left(\limsup_{n\To\infty}\frac1n\log\sum_{W\in\Sigma_{A,n}}\sup_{x\in[W]}\exp(pS_0^n\rho(\omega,x))\right)d\nu(\omega)\le\log((K-1)e^p+1),$$
	which implies that $$g(0)=\inf_{p\in\R}P_\nu(p\rho)\le 0.$$

	For the second formula, by Lemma \ref{l:Packing lowerbound}, we have
	$$h^P\left(\bigcup_{\delta>0}\Delta^\omega_{sup}([0,\delta])\right)\ge h^P\left(\Delta^\omega_{equ}(\mathcal{P}_A)\right)\ge \sup\{h_\mu:\mu\in\mathcal{E}_\nu(\Gamma_A)\}-h_\nu=h(\Sigma_A).$$
\end{proof}

Next, we will show that Theorem \ref{t:app} holds for each $\omega\in\Sigma_A$.

\begin{theorem}\label{t:app1.5}
	Assume that $\Sigma_A$ is an aperiodic and irreducible shift of finite type. Then for any $\omega\in\Sigma_A$,
	$$h^B(\mathrm{ML}_\omega(\Sigma_A,\sigma))=0\text{ and }h^P(\mathrm{ML}_\omega(\Sigma_A,\sigma))=h(\Sigma_A),$$
	that is,
	$$\dim_H(\mathrm{ML}_\omega(\Sigma_A,\sigma))=0\text{ and }\dim_P(\mathrm{ML}_\omega(\Sigma_A,\sigma))=\dim_H(\Sigma_A).$$
\end{theorem}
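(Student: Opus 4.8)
The plan is to establish the two equalities for an \emph{arbitrary} $\omega\in\Sigma_A$ (not merely a $\nu$-generic one), handling the Bowen statement and the packing statement by quite different means: the Bowen equality will come from a uniform estimate that ignores genericity, whereas the packing equality genuinely needs a transfer of the generic conclusion of Theorem~\ref{t:app} to every point.

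For the Bowen part I would argue directly. Since $\mathrm{ML}_\omega(\Sigma_A,\sigma)=\bigcup_{\delta>0}\Delta^\omega_{\mathrm{sup}}([0,\delta])$ and Bowen entropy is countably stable, it suffices to bound $h^B(\Delta^\omega_{\mathrm{sup}}([0,1/m]))$ for each $m\in\N$. Now Lemma~\ref{l:2upperbound} is stated for \emph{every} $\omega$, and choosing $\alpha=0\in[0,1/m]$ gives $h^B(\Delta^\omega_{\mathrm{sup}}([0,1/m]))\le\inf_{p\in\R}P(p\rho,\omega)$. The key point is that the combinatorial estimate from the proof of Theorem~\ref{t:app}, namely $P(p\rho,\omega)\le\log((K-1)e^{p}+1)$ for $p<0$, holds for \emph{every} $\omega$ because the count $\#A_i\le\binom{n}{i}(K-1)^i$ of length-$n$ words with $i$ disagreements from $\omega$ does not depend on $\omega$. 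Letting $p\to-\infty$ yields $\inf_p P(p\rho,\omega)\le 0$; the reverse inequality $\ge 0$ follows since the diagonal term $x=\omega$ forces $Z_n(p\rho,\omega)\ge 1$. Hence $\inf_p P(p\rho,\omega)=0$ and $h^B(\mathrm{ML}_\omega)=0$ for every $\omega$, with no appeal to genericity.

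For the packing part the inequality $h^P(\mathrm{ML}_\omega)\le h^P(\Sigma_A)=h(\Sigma_A)$ is immediate from monotonicity and $h^P\le h^{UC}$. For the reverse I would fix the arbitrary $\omega$ and, using Theorem~\ref{t:app} together with $\nu(\Sigma_A)=1$, fix a generic $\omega^{*}\in\Sigma_A$ with $h^P(\mathrm{ML}_{\omega^{*}})=h(\Sigma_A)$; by Lemma~\ref{l:Packing lowerbound} with $C=\mathcal{P}_A$ there is a set $M^{*}\subseteq\Delta^{\omega^{*}}_{\mathrm{equ}}(\mathcal{P}_A)\subseteq\mathrm{ML}_{\omega^{*}}$ with $h^P(M^{*})=h(\Sigma_A)$, and by its construction every point of $M^{*}$ is a concatenation of long admissible words over a fixed, point-independent grid $\{t^{j}_{l}\}$, joined by buffers of fixed length $r$ (recall $A^{r}>0$). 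I would then build a packing-entropy-preserving map $\Phi\colon M^{*}\to\mathrm{ML}_\omega$ blockwise: on the sparse, very long blocks that in $M^{*}$ drive the average toward $0$, replace the content by a copy of $\omega$ itself (admissible since $\omega\in\Sigma_A$); on the remaining, entropy-carrying blocks, keep an injectively relabelled admissible word, discarding only the exponentially negligible words that agree with $\omega$ on more than a small density of coordinates, and re-glue all blocks with length-$r$ buffers supplied by aperiodicity. One checks that (i) a copy-$\omega$ block contributes at most $\sum_{k\ge 1}K^{-k}=1/(K-1)$ to $S_0^{n}\rho(\omega,\cdot)$ regardless of its length, so the running average to $\omega$ tends to $0$ along the ends of those blocks, giving $\liminf=0$; (ii) the enforced positive-density disagreement on the entropy blocks keeps the average bounded below at their ends, giving $\limsup>0$, whence $\Phi(M^{*})\subseteq\mathrm{ML}_\omega$; and (iii) $\Phi$ is injective and dilates coordinates only by the buffer factor $1+O(r/\mathrm{blocklength})=1+o(1)$, so the distribution principle (Lemma~\ref{l:DP}), applied along the entropy-block-ending times, gives $h^P(\mathrm{ML}_\omega)\ge h^P(\Phi(M^{*}))=h^P(M^{*})=h(\Sigma_A)$.

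The main obstacle is precisely step (ii)–(iii): keeping $y=\Phi(x)$ admissible while simultaneously controlling its agreement/disagreement statistics against the \emph{new} reference $\omega$. This is where aperiodicity is indispensable, since fixed-length buffers realise every required transition, and where the long-block structure of $M^{*}$ is used, since buffers and boundary corrections occupy a vanishing fraction of coordinates and therefore perturb neither the distance averages enough to spoil $\liminf=0$ and $\limsup>0$ nor the word-counts enough to lose entropy. It is also worth noting that separating the ``liminf'' scales (copy-block ends) from the packing scales (entropy-block ends) is legitimate, because the distribution principle only requires the cylinder-measure bound along a single subsequence $\{n_i\}$. A clean alternative to the transfer map would be to rerun the construction of Lemma~\ref{l:Packing lowerbound} directly for the arbitrary $\omega$, using the same combinatorial building blocks in place of the disintegrations $\mu_j^{\omega,\xi}$; the two routes are essentially equivalent, and either one upgrades Theorem~\ref{t:app} to the present statement.
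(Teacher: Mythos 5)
Your route differs from the paper's in both halves, so let me take them separately. For the Bowen equality your argument is correct and genuinely more direct than the paper's: Lemma~\ref{l:2upperbound} is indeed stated for \emph{every} $\omega$, and the counting bound $Z_n(p\rho,\omega)\le((K-1)e^{p}+1)^{n}$ for $p<0$ is pointwise in $\omega$ because $\#A_i\le\binom{n}{i}(K-1)^{i}$ does not depend on $\omega$; hence $\inf_{p}P(p\rho,\omega)\le 0$ for every $\omega$ and countable stability of $h^{B}$ finishes the claim without any genericity. The one blemish is that $0\notin\mathcal{P}_A^{o}$ (here $\mathcal{P}_A=[0,\max\mathcal{P}_A]$), so $C=[0,1/m]$ falls outside the standing hypothesis of Section~\ref{s:4}; you should either note that the proof of Lemma~\ref{l:2upperbound} never actually uses $C\subset\mathcal{P}_A^{o}$, or work with $C=(0,1/m)$ and let $\alpha\to0^{+}$, observing that $\inf_{p}\bigl(P(p\rho,\omega)-p\alpha\bigr)\to0$. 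The paper, by contrast, does not argue pointwise here at all: it proves both equalities only for $\nu$-a.e.\ $\omega$ (Theorem~\ref{t:app}) and then transfers.

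The packing half is where your proposal has a genuine gap. The map $\Phi$ you describe collapses every copy-$\omega$ block of $M^{*}$ to a single deterministic word and discards words on the entropy blocks, so it is neither injective nor defined on all of $M^{*}$; it is therefore not a bi-Lipschitz bijection, and the load-bearing chain $h^{P}(\mathrm{ML}_\omega)\ge h^{P}(\Phi(M^{*}))=h^{P}(M^{*})$ does not follow from the properties you assert. What must actually be done is to re-run the counting and the distribution principle (Lemma~\ref{l:DP}) on the image set directly: verify that at the entropy-block-ending times $T_{n_L}$ the number of surviving cylinders is still at least $e^{T_{n_L}(h(\Sigma_A)-o(1))}$ --- which holds because the final block dominates in length and the discarded words are subexponentially few when $h(\Sigma_A)>0$, not because of any Lipschitz property --- and then build the uniform measure on the new set. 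At that point you are no longer transferring $M^{*}$ but re-proving Lemma~\ref{l:Packing lowerbound} with $\omega$ as the reference point, as you yourself concede in the last sentence; that is feasible, but it is the entire content of the step and your sketch does not supply it. The paper avoids all of this with a single, cleaner device: the symbol-swapping map $\varphi^{M}_{\omega,\omega'}$ of Lemma~\ref{l:phiM}, defined on all of $\Sigma_A$, which is injective, bi-H\"older with exponent $(M+r)/M\to1$, and carries $\mathrm{ML}_\omega$ into $\mathrm{ML}_{\omega'}$; letting $M\to\infty$ transfers the Bowen \emph{and} packing statements simultaneously from the $\nu$-generic $\omega$ of Theorem~\ref{t:app} to every $\omega$. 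If you keep your two-pronged structure, the Bowen prong can stand, but the packing prong should either adopt such a distortion-controlled global map or be written out as a complete Moran construction relative to the arbitrary $\omega$.
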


\begin{proof}
We prove it in two cases: $\Sigma$ and $\Sigma_A$, where the first case aims to show the main idea of proof.
	
\begin{itemize}
	\item[(1)] Simple case: $\Sigma_A=\Sigma$.
\end{itemize}

For $\omega,\omega'\in\Sigma$, define $\varphi_{\omega,\omega'}:\Sigma\To\Sigma$ by
$$\varphi_{\omega,\omega'}(x)_i=\left\{
\begin{aligned}
\omega'_i,\quad&x_i=\omega_i,\\
x_i,\quad&x_i\neq\omega_i\text{ and }x_i\neq\omega'_i,\\
\omega_i,\quad&x_i\neq\omega_i\text{ and }x_i=\omega'_i,\\
\end{aligned}
\right.$$
for $x=x_0x_1\cdots\in\Sigma$.
It is obvious that $\varphi_{\omega,\omega'}$ is continuous and bi-Lipschitz.
And we have the following lemma.
\begin{lemma}\label{l:phi}
	For any $\omega,\omega'\in\Sigma$, we have
	\begin{itemize}
		\item[(i)] $\varphi_{\omega',\omega}(\varphi_{\omega,\omega'}(x))=x$;
		\item[(ii)] $\varphi_{\omega,\omega'}(\Delta_{sup}^\omega([0,\delta]))=\Delta_{sup}^{\omega'}([0,\delta])$.
	\end{itemize}
\end{lemma}

\begin{proof}
	By the definition of $\varphi_{\omega,\omega'}$, it is no hard to prove (i).
	To prove (ii), we note that
	$$\{i\in\N:\omega_i=x_i\}=\{i\in\N:\omega'_i=\varphi_{\omega,\omega'}(x)_i\},$$
	which implies that for $i\in\N$, $$\rho(\sigma^i\omega,\sigma^ix)=\rho(\sigma^i\omega',\sigma^i\varphi_{\omega,\omega'}(x)).$$
	Then it ends the proof of (ii).
\end{proof}

	Since $\varphi_{\omega,\omega'}$ is continuous and bi-Lipschitz, by Lemma \ref{l:phi}, then for $\omega,\omega'\in\Sigma$,
	$$h^B(\mathrm{ML}_\omega(\Sigma,\sigma))=h^B(\mathrm{ML}_{\omega'}(\Sigma,\sigma))\text{ and }h^P(\mathrm{ML}_\omega(\Sigma,\sigma))=h^P(\mathrm{ML}_{\omega'}(\Sigma,\sigma)).$$
	Since there is an ergodic quasi-Bernoulli measure on $\Sigma$, then by Theorem \ref{t:app}, it ends the proof of simple case.

\begin{itemize}
	\item[(2)] General case: aperiodic and irreducible shift of finite type $\Sigma_A$.
\end{itemize}
For convenience, we give some notations.
Since $\Sigma_A$ is aperiodic and irreducible, we choose $r\ge 0$ such that $A^{r+1}>0$.
Then for any two symbols $a,b\in\mathcal{A}$, we fix a connected word $W_{a,b}\in\Sigma_{A,r}$ such that $aW_{a,b}b\in\Sigma_{A,r+2}$.

Fix any $\omega,\omega'\in\Sigma_A$ and $M\in\mathbb{N}$.
We will define a map $\varphi^M_{\omega,\omega'}:\Sigma_A\rightarrow\Sigma_A$ as follows.
For each $k\in\mathbb{N}$, let
$$I^0_k=[a^0_k,a^0_{k+1})\cap\N:=[kM,(k+1)M)\cap\mathbb{N},$$ $$I^1_k=[a^1_k,a^2_k)\cap\N:=[k(M+r),k(M+r)+M)\cap\mathbb{N},$$
and
$$I^2_k=[a^2_k,a^1_{k+1})\cap\N:=[k(M+r)+M,(k+1)(M+r))\cap\mathbb{N}.$$
For any $x\in\Sigma_A$, we choose $x'\in\Sigma_A$ by the following steps:
\begin{itemize}
	\item[(i)] We define $x'$ on $I^1_k$ for each $k\in\N$:
	$$x'|_{I^1_k}=\left\{
	\begin{aligned}
	\omega'|_{I^1_k},\quad&x|_{I^0_k}=\omega|_{I^0_k},\\
	\omega|_{I^0_k},\quad&x|_{I^0_k}=\omega'|_{I^1_k}\neq \omega|_{I^0_k},\\
	x|_{I^0_k},\quad&x|_{I^0_k}\neq \omega|_{I^0_k}\text{ and }x|_{I^0_k}\neq \omega'|_{I^1_k}.\\
	\end{aligned}
	\right.$$

\begin{figure}
	\begin{center}
		\begin{tikzpicture}
		\foreach \x in {2}
		{
			\draw[thin,-](-5,\x)--(-4,\x);
			\draw[thin,-](-1,\x)--(5,\x);
			\draw[very thick,-](-4,\x)--(-1,\x);
			\draw(6,\x)node[left]{$x$};
			\draw(-2.5,\x)node[below]{$(1)$};
			\draw[thick,-](-4,\x-0.1)--(-4,\x+0.1)node[above]{$a^0_{k}$};
			\draw[thick,-](-1,\x-0.1)--(-1,\x+0.1)node[above]{$a^0_{k+1}$};
		}
		\foreach \x in {1}
		{
			\draw[thin,-](-5,\x)--(-4,\x);
			\draw[thin,-](-1,\x)--(5,\x);
			\draw[very thick,dash pattern=on 5pt off 3pt](-4,\x)--(-1,\x);
			\draw(6,\x)node[left]{$\omega$};
			\draw(-2.5,\x)node[below]{$(2)$};
			\draw[thick,-](-4,\x-0.1)--(-4,\x+0.1)node[above]{$a^0_{k}$};
			\draw[thick,-](-1,\x-0.1)--(-1,\x+0.1)node[above]{$a^0_{k+1}$};
		}
		
		\foreach \x in {0}
		{
			\draw[thin,-](-5,\x)--(1,\x);
			\draw[thin,-](4,\x)--(5,\x);
			\draw[very thick,dashdotted](1,\x)--(4,\x);
			\draw(6,\x)node[left]{$\omega'$};
			\draw(2.5,\x)node[below]{$(3)$};
			\draw[thick,-](1,\x-0.1)--(1,\x+0.1)node[above]{$a^1_{k}$};
			\draw[thick,-](4,\x-0.1)--(4,\x+0.1)node[above]{$a^2_{k}$};
		}
		\foreach \x in {-1}
		{
			\draw[thin,-](0,\x)--(1,\x);
			\draw[thin,-](4,\x)--(5,\x);
			\draw(6,\x)node[left]{$x'$};
			\draw(0,\x)node[left]{If $(1)=(2)$:};
			\draw[thick,-](1,\x-0.1)--(1,\x+0.1);
			\draw[thick,-](4,\x-0.1)--(4,\x+0.1);
			\draw[very thick,dashdotted](1,\x)--(4,\x);
			\draw(2.5,\x)node[below]{$=(3)$};
			\draw[thick,-](1,\x-0.1)--(1,\x+0.1)node[above]{$a^1_k$};
			\draw[thick,-](4,\x-0.1)--(4,\x+0.1)node[above]{$a^2_{k}$};
		}
		\foreach \x in {-2}
		{
			\draw[thin,-](0,\x)--(1,\x);
			\draw[thin,-](4,\x)--(5,\x);
			\draw(6,\x)node[left]{$x'$};
			\draw(0,\x)node[left]{If $(1)=(3)\neq(2)$:};
			\draw[thick,-](1,\x-0.1)--(1,\x+0.1);
			\draw[thick,-](4,\x-0.1)--(4,\x+0.1);
			\draw[thick,-](1,\x-0.1)--(1,\x+0.1)node[above]{$a^1_{k}$};
			\draw[thick,-](4,\x-0.1)--(4,\x+0.1)node[above]{$a^2_{k}$};
			\draw[very thick,dash pattern=on 5pt off 3pt](1,\x)--(4,\x);
			\draw(2.5,\x)node[below]{$=(2)$};
		}
		
		\foreach \x in {-3}
		{
			\draw[thin,-](0,\x)--(1,\x);
			\draw[thin,-](4,\x)--(5,\x);
			\draw(6,\x)node[left]{$x'$};
			\draw(0,\x)node[left]{Otherwise:};
			\draw[thick,-](1,\x-0.1)--(1,\x+0.1);
			\draw[thick,-](4,\x-0.1)--(4,\x+0.1);
			\draw[thick,-](1,\x-0.1)--(1,\x+0.1)node[above]{$a^1_{k}$};
			\draw[thick,-](4,\x-0.1)--(4,\x+0.1)node[above]{$a^2_{k}$};
			\draw[very thick,-](1,\x)--(4,\x);
			\draw(2.5,\x)node[below]{$=(1)$};
		}
		\end{tikzpicture}
	\end{center}
	\caption{The symbols of $x'$ on $I^1_k$}
	\label{f:phi1}
\end{figure}

	\item[(ii)] The rest of $x'$ on $I^2_k$ is defined as follows:
	$$x'|_{I^2_k}=\left\{
	\begin{aligned}
	\omega'|_{I^2_k},\quad&x'|_{I^1_k}=\omega'|_{I^1_k}\text{ and }x'|_{I^1_{k+1}}=\omega'|_{I^1_{k+1}},\\
	W_{a,b},\quad&\text{otherwise},
	\end{aligned}
	\right.$$
	where $a=x'|_{a^2_{k}-1}$ and $b=x'|_{a^1_{k+1}}$.
	\begin{figure}
		\begin{center}
			\begin{tikzpicture}
			\foreach \x in {1}
			{
				\draw[thin,-](-4.5,\x)--(-4,\x);
				\draw[thin,-](4,\x)--(4.5,\x);
				\draw[thin,dashdotted](1,\x)--(4,\x);
				\draw[thin,dashdotted](-1,\x)--(-4,\x);
				\draw[very thick](-1,\x)--(1,\x);
				\draw(5.5,\x)node[left]{$\omega'$};
				\draw(2.5,\x)node[below]{$(2)$};
				\draw(-2.5,\x)node[below]{$(1)$};
				\draw[thick,-](1,\x-0.1)--(1,\x+0.1)node[above]{$a^1_{k+1}$};
				\draw[thick,-](4,\x-0.1)--(4,\x+0.1)node[above]{$a^2_{k+1}$};
				\draw[thick,-](-4,\x-0.1)--(-4,\x+0.1)node[above]{$a^1_{k}$};
				\draw[thick,-](-1,\x-0.1)--(-1,\x+0.1)node[above]{$a^2_{k}$};
				\draw(0,\x)node[below]{$\omega'|_{I^2_k}$};
			}
			\foreach \x in {0}
			{
				\draw[thin,-](-4.5,\x)--(-4,\x);
				\draw[thin,-](4,\x)--(4.5,\x);
				\draw[thin,dashdotted](1,\x)--(4,\x);
				\draw[thin,dashdotted](-1,\x)--(-4,\x);
				\draw(5.5,\x)node[left]{$x'$};
				\draw(-4.5,\x)node[left]{If $(1)=(3)$ and $(2)=(4)$:};
				\draw[thick,-](1,\x-0.1)--(1,\x+0.1);
				\draw[thick,-](4,\x-0.1)--(4,\x+0.1);
				\draw[thick,-](-1,\x-0.1)--(-1,\x+0.1);
				\draw[thick,-](-4,\x-0.1)--(-4,\x+0.1);
				\draw[very thick](-1,\x)--(1,\x);
				\draw(2.5,\x)node[below]{$(4)$};
				\draw(-2.5,\x)node[below]{$(3)$};
				\draw(0,\x)node[below]{$\omega'|_{I^2_k}$};
			}
			\foreach \x in {-1}
			{
				\draw[thin,-](-4.5,\x)--(-1,\x);
				\draw[thin,-](1,\x)--(4.5,\x);
				\draw(5.5,\x)node[left]{$x'$};
				\draw(-4.5,\x)node[left]{Otherwise:};
				\draw[thick,-](1,\x-0.1)--(1,\x+0.1);
				\draw[thick,-](4,\x-0.1)--(4,\x+0.1);
				\draw[thick,-](-1,\x-0.1)--(-1,\x+0.1);
				\draw[thick,-](-4,\x-0.1)--(-4,\x+0.1);
				\draw[very thick,dash pattern=on 5pt off 3pt](-1,\x)--(1,\x);
				\draw(0,\x)node[below]{$W_{a,b}$};
				\draw(1,\x)node[below]{$b$};
				\draw(-1,\x)node[below]{$a$};
			}
			\end{tikzpicture}
		\end{center}
		\caption{The symbols of $x'$ on $I^2_k$}
		\label{f:phi2}
	\end{figure}
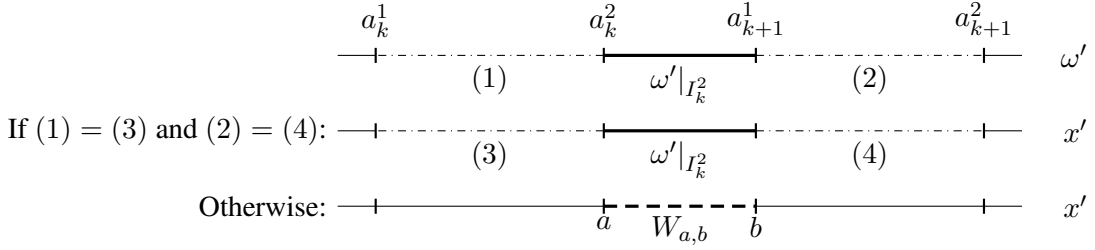
\end{itemize}
Finally, we set $\varphi^M_{\omega,\omega'}(x)=x'$.
Since $\Sigma_A$ is aperiodic and irreducible shift of finite type and $r$ such that $A^{r+1}>0$, the map $\varphi^M_{\omega,\omega'}$ is well defined.

Indeed, whether $x'|_{I^1_k}$ is equal to $\omega'|_{I^1_k}$ depends on whether $x|_{I^0_k}$ is equal to $\omega|_{I^0_k}$. If $x|_{I^0_k}\neq \omega|_{I^0_k}$, then we choose one of them as $x'|_{I^1_k}$ such that $x'|_{I^1_k}\neq\omega'|_{I^1_k}$. For an intuitive understanding, the symbols of $x'$ on $I^1_k$ are shown in Figure \ref{f:phi1}.

On the other hand, if $x|_{I^0_k\cup I^0_{k+1}}=\omega|_{I^0_k\cup I^0_{k+1}}$, we set $x'|_{I^2_k}=\omega'|_{I^2_k}$ since the fix connected word $W_{a,b}$ may be not equal to $\omega'|_{I^2_k}$. This setting will ensure that $\rho(\omega',\varphi^M_{\omega,\omega'}(x))$ can be bounded by $\rho(\omega,x)$.  For an intuitive understanding, the symbols of $x'$ on $I^2_k$ are shown in Figure \ref{f:phi2}.

Then we have the following lemma:
\begin{lemma}\label{l:phiM}
	For any $x,y\in\Sigma_A$, we have
	\begin{itemize}
		\item[(i)] $K^{-M}\rho(\omega,x)^{\frac{M+r}{M}}\le\rho(\omega',\varphi^M_{\omega,\omega'}(x))\le K^{M+2r}\rho(\omega,x)^{\frac{M+r}{M}}$;
		\item[(ii)] $K^{-M}\rho(x,y)^{\frac{M+r}{M}}\le\rho(\varphi^M_{\omega,\omega'}(x),\varphi^M_{\omega,\omega'}(y))\le K^{M+2r}\rho(x,y)^{\frac{M+r}{M}}$.
	\end{itemize}
\end{lemma}

\begin{proof}
	It is obvious that if $x=\omega$ then $\varphi^M_{\omega,\omega'}(x)=\omega'$.
	So we assume that $x\neq \omega$.
	Let $x'=\varphi^M_{\omega,\omega'}(x)$, $p=\min\{i\in\N:\omega_i\neq x_i\}$, and $k=\lfloor \frac{p}{M}\rfloor$ be the largest integer not greater than $\frac{p}{M}$. If $k=0$, then $\omega'|_{I^1_0}\neq x'|_{I^1_0}$, that is, $$K^{-M}\rho(\omega,x)^{\frac{M+r}{M}}\le K^{-M}\le \rho(\omega',x')\le 1\le K^{M+2r}\rho(\omega,x)^{\frac{M+r}{M}}.$$
	
	If $k\ge 1$, we have $\min\{i\in\N:\omega'_i\neq x'_i\}\in I^2_{k-1}\cup I^1_k$.
	Thus,
	$$K^{-M}\rho(\omega,x)^{\frac{M+r}{M}}\le\rho(\omega',x')\le K^{M+2r}\rho(\omega,x)^{\frac{M+r}{M}},$$
	which proves (i).
	
	The proof of (ii) is similar to (i).
\end{proof}

By (i) of Lemma \ref{l:phiM}, for $M\ge r$, we have
$$K^{-M}\rho(\omega,x)^2\le\rho(\omega',\varphi^M_{\omega,\omega'}(x))\le K^{M+2r}\rho(\omega,x).$$
Since $\sigma^{M+r}\varphi^M_{\omega,\omega'}(x)=\varphi^M_{\sigma^M\omega,\sigma^{M+r}\omega'}(\sigma^Mx)$, then
$$\rho(\sigma^{Mi}\omega,\sigma^{Mi}x)^2\le \rho(\sigma^{(M+r)i}\omega,\sigma^{(M+r)i}\varphi^M_{\omega,\omega'}(x))\le \rho(\sigma^{Mi}\omega,\sigma^{Mi}x).$$
By the inequality $\frac1n\sum_{i=0}^{n-1}a_i^2\ge \left(\frac1n\sum_{i=0}^{n-1}a_i\right)^2$, we have
$$\varphi^M_{\omega,\omega'}(\mathrm{ML}_{\omega}(\Sigma_A,\sigma^M))\subset \mathrm{ML}_{\omega'}(\Sigma_A,\sigma^{M+r}).$$
Noticing that $\mathrm{ML}_\omega(\Sigma_A,\sigma^n)=\mathrm{ML}_\omega(\Sigma_A,\sigma)$ for each $n\in\N$, we have $$\varphi^M_{\omega,\omega'}(\mathrm{ML}_\omega(\Sigma_A,\sigma))\subset\mathrm{ML}_{\omega'}(\Sigma_A,\sigma).$$
And by (ii) of Lemma \ref{l:phiM}, $\varphi^M_{\omega,\omega'}$ is bi-Lipschitz. Then we have
$$\frac{M}{M+r}\dim_H(\mathrm{ML}_{\omega}(\Sigma_A,\sigma))=\dim_H(\varphi^M_{\omega,\omega'}(\mathrm{ML}_{\omega}(\Sigma_A,\sigma)))\le \dim_H(\mathrm{ML}_{\omega'}(\Sigma_A,\sigma)).$$
Let $M\To\infty$, $\dim_H(\mathrm{ML}_{\omega}(\Sigma_A,\sigma))\le\dim_H(\mathrm{ML}_{\omega'}(\Sigma_A,\sigma))$. Thus for any  $\omega,\omega'\in\Sigma_A$, $$\dim_H(\mathrm{ML}_{\omega}(\Sigma_A,\sigma))=\dim_H(\mathrm{ML}_{\omega'}(\Sigma_A,\sigma))$$
and
$$\dim_P(\mathrm{ML}_{\omega}(\Sigma_A,\sigma))=\dim_P(\mathrm{ML}_{\omega'}(\Sigma_A,\sigma))$$
where the formula of $\dim_P$ is similar to the case of $\dim_H$.
Since there is an ergodic quasi-Bernoulli measure on $\Sigma_A$, then by Theorem \ref{t:app}, it ends the proof.
\end{proof}

\begin{remark}\label{r:LY}
	By \cite[Theorem 6.14]{YCL19}, it is proved that $\mathcal{H}^2(\mathrm{LY}(\Sigma,\sigma))=1$, where $\mathrm{LY}(\Sigma,\sigma)$ is the set of all Li--Yorke pairs of $\Sigma$. Since $\mathcal{H}^2=\mathcal{H}^1\times\mathcal{H}^1$, we have $\mathcal{H}^1(\mathrm{LY}_\omega(\Sigma,\sigma))=1$ for $\mathcal{H}^1$-almost everywhere $\omega\in\Sigma$, where $\mathrm{LY}_\omega(\Sigma,\sigma)$ is the set of all points $x\in\Sigma$ such that $(\omega,x)$ is a Li--Yorke pair.
	Then by $\varphi_{\omega,\omega'}$ in the simple case of the proof of Theorem \ref{t:app1.5}, it shows that $\mathcal{H}^1(\mathrm{LY}_\omega(\Sigma,\sigma))=1$ for each $\omega\in\Sigma$.
\end{remark}

Finally, we can prove Theorem \ref{t:ml}.

\begin{proof}[Proof of Theorem \ref{t:ml}]
	For an irreducible shift of finite type $\Sigma_A$, by (\cite[Section 4.5, Proposition 4.5.6]{LM95}), we have
	$$\Sigma_A=\biguplus_{i=0}^{p-1}\Sigma_i$$
	where $\Sigma_i$ is the union of some cylinders whose length is one, $\sigma(\Sigma_i)=\Sigma_{(i+1)\mathrm{mod}\,p}$ and $(\Sigma_i,\sigma^p)$ is topologically conjugate to an aperiodic and irreducible shift of finite type $(X_i,\sigma)$.
	Then $h^B(\Sigma_{i})\ge h^B(\Sigma_{(i+1)\mathrm{mod}\,p})$, and by $h(\Sigma_A)=\sup_{0\le i\le p-1}h^B(\Sigma_i)$, we have $h^B(\Sigma_i)=h(\Sigma_A)$.
	
	Fix any $i$ and $x\in\Sigma_i$. We claim that $\mathrm{ML}_x(\Sigma_i,\sigma^p)=\mathrm{ML}_x(\Sigma_A,\sigma)$.
	It only needs to prove that $\Sigma_i\supset\mathrm{ML}_x(\Sigma_A,\sigma^p)$.
	Fix any $y\in\mathrm{ML}_x(\Sigma_A,\sigma^p)$. If $y\notin\Sigma_i$, we have $\sigma^{pj}y\notin\Sigma_i$, which implies that $\rho(\sigma^{pj}x,\sigma^{pj}y)=1$ for any $j\in\N$, contradicted to $y\in\mathrm{ML}_x(\Sigma_A,\sigma^p)$.
	
	Let $\phi_i:\Sigma_i\To X_i$ be the conjugation from $(\Sigma_i,\sigma^p)$ to $(X_i,\sigma)$.
	Then  $h^B(\mathrm{ML}_x(\Sigma_i,\sigma^p))=\frac1ph^B(\mathrm{ML}_{\phi_i(x)}(X_i,\sigma))$.
	By Theorem \ref{t:app1.5}, we have $$h^B(\mathrm{ML}_{\phi_i(x)}(X_i,\sigma))=0,$$
	which implies that
	$$h^B(\mathrm{ML}_x(\Sigma_A,\sigma))=0.$$
	
	Similarly, it can be proved that $h^P(\mathrm{ML}_x(\Sigma_A,\sigma))=h(\Sigma_A)$ for each $x\in\Sigma_A$ by $h^P(\Sigma_i)=\frac1ph(X_i)$.
\end{proof}

\begin{remark}
	For any cylinder $W$ in $\Sigma_A$ with $[W]\cap\Sigma_i\neq\emptyset$, since $(\Sigma_i,\sigma^p)$ is topologically conjugate to an aperiodic and irreducible shift of finite type $(X_i,\sigma)$, there exists $m\in\mathbb{N}$ such that $\sigma^{mp}([W])=\Sigma_i$.
	Then we have $\sigma^{mp}([W]\cap\mathrm{ML}_x(\Sigma_A,\sigma))=\mathrm{ML}_{\sigma^{mp}x}(\Sigma_A,\sigma)$, and by $\sigma^{mp}$ is finite-to-one, $h^P([W]\cap\mathrm{ML}_x(\Sigma_A,\sigma))=h(\Sigma_A)$.
\end{remark}

\section{Proof of Theorem \ref{t:ly}}\label{s:8}

In this section, we will prove Theorem \ref{t:ly}. Similar to Section \ref{s:7}, the proof is divided into two parts.
First, we prove that Theorem \ref{t:ly} holds for aperiodic and irreducible shifts of finite type. The proof also construct a map transferring almost everywhere to everywhere.

\begin{theorem}\label{t:ly2}
	Assume that $\Sigma_A$ is an aperiodic and irreducible shift of finite type. Then for any $\omega\in\Sigma_A$,
	$$h^B(\mathrm{LY}_\omega(\Sigma_A,\sigma))=h(\Sigma_A),$$
	that is,
	$$\dim_H(\mathrm{LY}_\omega(\Sigma_A,\sigma))=\dim_H(\Sigma_A).$$
\end{theorem}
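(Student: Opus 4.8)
The plan is to obtain the statement from the two-sided estimate $0\le h^B(\mathrm{LY}_\omega(\Sigma_A,\sigma))\le h(\Sigma_A)$, where the upper bound is immediate from $\mathrm{LY}_\omega(\Sigma_A,\sigma)\subset\Sigma_A$ together with the monotonicity $h^B(Z)\le h^B(\Sigma_A)=h(\Sigma_A)$. Thus the entire content lies in the lower bound, and if $h(\Sigma_A)=0$ there is nothing to prove, so I assume $h(\Sigma_A)>0$. A convenient first reduction is to discard the $\limsup$ condition: writing $R_\omega=\{y\in\Sigma_A:\liminf_{n\to\infty}\rho(\sigma^n\omega,\sigma^n y)=0\}$, one checks that $\{y:\limsup_n\rho(\sigma^n\omega,\sigma^n y)=0\}$ is precisely the set of $y$ that eventually coincide with $\omega$, which is countable; hence $\mathrm{LY}_\omega(\Sigma_A,\sigma)=R_\omega\setminus(\text{countable set})$, and by countable stability of Hausdorff dimension (equivalently of $h^B$, via $\dim_H=h^B/\log K$) we get $\dim_H(\mathrm{LY}_\omega(\Sigma_A,\sigma))=\dim_H(R_\omega)$. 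So it suffices to produce a subset $M\subset R_\omega$ with $h^B(M)=h(\Sigma_A)$.

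Next I would build $M$ by a Moran-type construction adapted to the SFT. Fix $r$ with $A^{r+1}>0$ and the connecting words $W_{a,b}\in\Sigma_{A,r}$ as in Section \ref{s:7}. Along a rapidly increasing sequence of times I alternate two kinds of blocks, separated by connecting words of length $r$: \emph{synchronizing} blocks, on which every $x\in M$ is forced to copy $\omega$, and \emph{free} blocks, on which $x$ ranges over all admissible words compatible with the chosen endpoints. Choosing the $k$-th synchronizing block of length $L_k\to\infty$ guarantees $\rho(\sigma^{s_k}\omega,\sigma^{s_k}x)\le K^{-L_k}\to0$ at its left endpoint $s_k$, hence $\liminf_n\rho(\sigma^n\omega,\sigma^n x)=0$ for every $x\in M$, i.e.\ $M\subset R_\omega$. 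The lengths and positions are arranged so that the synchronizing blocks and the length-$r$ connectors together have density zero, i.e.\ the number of free coordinates in $[0,n)$ is $n-o(n)$.

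To extract the entropy I would define a probability measure $\hat\mu$ on $M$ that spreads its mass uniformly over the admissible completions inside each free block, exactly in the spirit of the measures built in Lemma \ref{l:Moren} and Lemma \ref{l:Packing lowerbound}. Since the number of admissible words of a given length with prescribed endpoints grows like $e^{\ell h(\Sigma_A)}$, for every $\epsilon>0$ and every sufficiently long cylinder $[W]$ one obtains $\hat\mu([W]\cap M)\le C_\epsilon e^{-|W|(h(\Sigma_A)-\epsilon)}$, the loss $\epsilon$ stemming only from the density-zero synchronizing coordinates. Lemma \ref{l:FL} then yields $h^B(M)\ge h(\Sigma_A)-\epsilon$ for every $\epsilon$, hence $h^B(M)=h(\Sigma_A)$; combined with the reduction above this gives $h^B(\mathrm{LY}_\omega(\Sigma_A,\sigma))=h(\Sigma_A)$ for every $\omega\in\Sigma_A$.

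The main technical obstacle is purely the SFT bookkeeping: unlike the full shift one cannot freely prescribe symbols at block boundaries, so every junction must be bridged by a word $W_{a,b}$, and one must verify both that the prescribed synchronizing and free blocks can always be legally concatenated (this is where $A^{r+1}>0$ enters) and that inserting these length-$r$ bridges does not erode the entropy (it does not, since they have density zero). If one prefers to remain inside $\mathrm{LY}_\omega(\Sigma_A,\sigma)$ without appealing to the countable-set reduction, the same construction can be made to force, at the start of each free block, an admissible symbol different from the corresponding symbol of $\omega$—again possible because $A^{r+1}>0$ and $h(\Sigma_A)>0$—which directly gives $\limsup_n\rho(\sigma^n\omega,\sigma^n x)\ge 1>0$. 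This is an alternative to the transfer-of-almost-everywhere-to-everywhere scheme of Section \ref{s:7}: here the construction already works for each fixed $\omega$, so no transfer map is required.
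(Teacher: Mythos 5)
Your proposal is correct, but it takes a genuinely different route from the paper. The paper proves this theorem by coding: for each $\epsilon>0$ it builds a bi-H\"older embedding $\varphi^M_\omega$ of the full shift on $L=\#\Sigma_{A,M}\ge e^{M(h(\Sigma_A)-\epsilon)}$ symbols into $\Sigma_A$, keyed to $\omega$ so that $\varphi^M_\omega$ carries $\mathrm{LY}_{\underline{0}}$ of that full shift into $\mathrm{LY}_\omega(\Sigma_A,\sigma)$, and then invokes the full-shift case; that case in turn rests on the external computation $\mathcal{H}^2(\mathrm{LY}(\Sigma,\sigma))=1$ from \cite{YCL19}, a Fubini argument, and the transfer map $\varphi_{\omega,\omega'}$ of Section \ref{s:7} (Remark \ref{r:LY}). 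You instead argue directly inside $\Sigma_A$: after the (correct) observation that $R_\omega\setminus\mathrm{LY}_\omega(\Sigma_A,\sigma)$ is the countable set of points eventually coinciding with $\omega$, so that by countable stability of $h^B$ it suffices to find a full-entropy subset of $R_\omega$, you run a Moran construction alternating density-zero synchronizing blocks (which force $\liminf_n\rho(\sigma^n\omega,\sigma^nx)=0$) with free blocks, and apply the Frostman-type Lemma \ref{l:FL} to the natural uniform measure. The counting input you need, namely that the number of admissible words of length $\ell$ with prescribed endpoint symbols is comparable to $e^{\ell h(\Sigma_A)}$, is exactly what aperiodicity and irreducibility (Perron--Frobenius) provide, and your density-zero bookkeeping is the standard one. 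What each approach buys: yours is self-contained and elementary, needs no transfer from almost-every to every $\omega$ and no appeal to the unpublished Hausdorff-measure result of \cite{YCL19}, and even admits a variant staying inside $\mathrm{LY}_\omega$ outright by forcing a disagreement at the start of each free block; the paper's approach reuses the machinery already built in Section \ref{s:7} and inherits the stronger measure-theoretic statement ($\mathcal{H}^1$-full measure, not merely full dimension) in the full-shift case. Both are valid proofs of the stated theorem.
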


\begin{proof}
	Fix any $\epsilon>0$. Then for large enough $M$, we have
	$$\#\Sigma_{A,M}\ge e^{M(h(\Sigma_A)-\epsilon)}.$$
	Let $L=\#\Sigma_{A,M}$ and $\Sigma_{A,M}=\{W_0,W_1,\dots,W_{L-1}\}.$
	Recall that there exists $r>0$ such that $A^{r+1}>0$. Then for any two symbols $a,b\in\mathcal{A}$, we fix a connected word $W_{a,b}\in\Sigma_{A,r}$ such that $aW_{a,b}b\in\Sigma_{A,r+2}$.
	Let $I^1_k=[k(M+r),k(M+r)+M)$ and $I^2_k=[k(M+r)+M,(k+1)(M+r))$.
	Then for any $\omega\in\Sigma_A$, we define $\varphi^M_\omega:\{0,1,\dots,L-1\}^\N\To\Sigma_A$ by two steps:
	\begin{itemize}
		\item[(1)] for each $k\in\N$, define the symbols of $\varphi^M_\omega(z)$ on $I^1_k$ by
		$$\varphi^M_\omega(z)|_{I^1_k}=W_{(i_k+z_k)\mathrm{ mod }L},$$
		where $i_k$ is the unique number such that $W_{i_k}=\omega|_{I^1_k}$;
		\item[(2)] for each $k\in\N$, define the symbols of $\varphi^M_\omega(z)$ on $I^2_k$ by
		$$\varphi^M_\omega(z)|_{I^2_k}=\left\{
		\begin{aligned}
		&x|_{I^2_k},\quad\text{if }z_k=z_{k+1}=0,\\
		&W_{a,b},\quad\text{otherwise,}
		\end{aligned}
		\right.$$
		where $a=\varphi^M_\omega(z)|_{k(M+r)+M-1}$ and $b=\varphi^M_\omega(z)|_{(k+1)(M+r)}$.
	\end{itemize}
	
	It is no hard to see that $\varphi^M_\omega$ is well-defined and continuous.
	Similar to Lemma \ref{l:phiM}, we have the following lemma:
	\begin{lemma}\label{l:phiM2}
		For any $\omega\in\Sigma_A$ and $z,z'\in\{0,1,\dots,L-1\}^\N$, we have
		\begin{itemize}
			\item[(i)] $K^{-M}\rho(\underline{0},z)^{(M+r)\frac{\log K}{\log L}}\le \rho(\omega,\varphi^M_\omega(z))\le K^r\rho(\underline{0},z)^{(M+r)\frac{\log K}{\log L}}$, where $\underline{0}=000\cdots\in\{0,1,\dots,L-1\}^\N$;
			\item[(ii)] $K^{-M}\rho(z,z')^{(M+r)\frac{\log K}{\log L}}\le \rho(\varphi^M_\omega(z),\varphi^M_\omega(z'))\le K^r\rho(z,z')^{(M+r)\frac{\log K}{\log L}}$.
		\end{itemize}
	\end{lemma}
	
	Since $\varphi_{\sigma^{M+r}\omega}^M\circ\sigma=\sigma^{M+r}\circ\varphi^M_{\omega}$, by (i) of Lemma \ref{l:phiM2}, we have
	$$\varphi^M_\omega(\mathrm{LY}_{\underline{0}}(\{0,1,\dots,L-1\}^\N,\sigma))\subset \mathrm{LY}_{\omega}(\Sigma_A,\sigma^{M+r}).$$
	
	Noticing that $\mathrm{LY}_{\omega}(\Sigma_A,\sigma^n)=\mathrm{LY}_{\omega}(\Sigma_A,\sigma)$ for each $n\in\N$, we have $$\varphi^M_\omega(\mathrm{LY}_{\underline{0}}(\{0,1,\dots,L-1\}^\N,\sigma))\subset \mathrm{LY}_{\omega}(\Sigma_A,\sigma).$$
	And by (ii) of Lemma \ref{l:phiM2}, $\varphi^M_\omega$ is bi-Lipschitz. Then we have
	$$
	\begin{aligned}
	\dim_H(\mathrm{LY}_{\omega}(\Sigma_A,\sigma))\ge&\dim_H(\varphi^M_{\omega}(\mathrm{LY}_{\underline{0}}(\{0,1,\dots,L-1\}^\N,\sigma)))\\
	=&\frac{\log L}{(M+r)\log K}\dim_H(\mathrm{LY}_{\underline{0}}(\{0,1,\dots,L-1\}^\N,\sigma)).
	\end{aligned}
	$$
	By Remark \ref{r:LY}, we have $\dim_H(\mathrm{LY}_{\underline{0}}(\{0,1,\dots,L-1\}^\N,\sigma))=1$.
	Then
	$$
	\dim_H(\mathrm{LY}_{\omega}(\Sigma_A,\sigma))\ge\frac{\log L}{(M+r)\log K}
	\ge\frac{M(h(\Sigma_A)-\epsilon)}{(M+r)\log K},
	$$
	where the last inequality holds since $L=\#\Sigma_{A,M}\ge e^{M(h(\Sigma_A)-\epsilon)}$.
	Let $M\To\infty$ and then $\epsilon\To0$, we have $\dim_H(\mathrm{LY}_{\omega}(\Sigma_A,\sigma))\ge\frac{h(\Sigma_A)}{\log K}=\dim_H(\Sigma_A)$. Thus for any  $\omega\in\Sigma_A$, $$\dim_H(\mathrm{LY}_{\omega}(\Sigma_A,\sigma))=\dim_H(\Sigma_A),$$
	which ends the proof.
\end{proof}

Now we can prove Theorem \ref{t:ly}.

\begin{proof}[Proof of Theorem \ref{t:ly}]
	For an irreducible shift of finite type $\Sigma_A$, by (\cite[Section 4.5, Proposition 4.5.6]{LM95}), we have
	$$\Sigma_A=\biguplus_{i=0}^{p-1}\Sigma_i$$
	where $\Sigma_i$ is the union of some cylinders of length one, $\sigma(\Sigma_i)=\Sigma_{(i+1)\mathrm{mod}\,p}$ and $(\Sigma_i,\sigma^p)$ is topologically conjugate to an aperiodic and irreducible shift of finite type $(X_i,\sigma)$.
	Then $h^B(\Sigma_{i})\ge h^B(\Sigma_{(i+1)\mathrm{mod}\,p})$, and by $h(\Sigma_A)=\sup_{0\le i\le p-1}h^B(\Sigma_i)$, we have $h^B(\Sigma_i)=h(\Sigma_A)$.
	
	Fix any $i$ and $x\in\Sigma_i$. We claim that $\mathrm{LY}_x(\Sigma_i,\sigma^p)=\mathrm{LY}_x(\Sigma_A,\sigma)$.
	It only needs to prove that $\Sigma_i\supset\mathrm{LY}_x(\Sigma_A,\sigma)$.
	Fix any $y\in\mathrm{LY}_x(\Sigma_A,\sigma)$. If $y\notin\Sigma_i$, we have $\sigma^{pj}y\notin\Sigma_i$, which implies that $x_{pj}\neq y_{pj}$ for any $j\in\N$.
	Then for any $n\in\N$, let $n=pj-q$ for some $j\in\N$ and $0\le q<p$. Since $x_{pj}\neq y_{pj}$, we have $\rho(\sigma^nx,\sigma^ny)\ge K^{-q}\ge K^{-p+1}$, which is contradicted to $y\in\mathrm{LY}_x(\Sigma_A,\sigma)$.
	
	Let $\phi_i:\Sigma_i\To X_i$ be the conjugation from $(\Sigma_i,\sigma^p)$ to $(X_i,\sigma)$.
	Then $h^B(\Sigma_i)=\frac1ph(X_i)$ and $h^B(\mathrm{LY}_x(\Sigma_i,\sigma^p))=\frac1ph^B(\mathrm{LY}_{\phi_i(x)}(X_i,\sigma))$.
	By Theorem \ref{t:ly2}, we have $$h^B(\mathrm{LY}_{\phi_i(x)}(X_i,\sigma))=h(X_i),$$
	which implies that
	$$h^B(\mathrm{LY}_x(\Sigma_A,\sigma))=h^B(\Sigma_i)=h(\Sigma_A).$$
	
\end{proof}

\begin{remark}
	For any cylinder $W$ in $\Sigma_A$ with $[W]\cap\Sigma_i\neq\emptyset$, since $(\Sigma_i,\sigma^p)$ is topologically conjugated to an aperiodic and irreducible shift of finite type $(X_i,\sigma)$, there exists $m\in\mathbb{N}$ such that $\sigma^{mp}([W])=\Sigma_i$.
	Then we have $\sigma^{mp}([W]\cap\mathrm{LY}_x(\Sigma_A,\sigma))=\mathrm{LY}_{\sigma^{mp}x}(\Sigma_A,\sigma)$, and by $\sigma^{mp}$ is finite-to-one, $h^B([W]\cap\mathrm{LY}_x(\Sigma_A,\sigma))=h(\Sigma_A)$.
\end{remark}

\section*{Acknowledgements}
	The work was supported by the
	National Natural Science Foundation of China (Nos.12071222 and 11971236), China Postdoctoral Science Foundation (No.2016M591873),
	and China Postdoctoral Science Special Foundation (No.2017T100384). The work was also funded by the Priority Academic Program Development of Jiangsu Higher Education Institutions.  We would like to express our gratitude to Tianyuan Mathematical Center in Southwest China(11826102), Sichuan University and Southwest Jiaotong University for their support and hospitality.

\end{document}